\title[Norms on Categories and Analogs of the CBS Theorem]{Norms on Categories and Analogs of the Schröder-Bernstein Theorem}
\date{\today}
\author[M.~Insall]{Matt Insall}
\address[M.~Insall]{Missouri S\&T, USA.}
\email{insall@mst.edu}
\author[D.~Luckhardt]{Daniel Luckhardt}
\address[D.~Luckhardt]{Department of Computer Science, University of Sheffield, UK.}
\email{daniel.luckhardt@mathematik.uni-goettingen.de}
\begin{document}

\maketitle

\begin{abstract}
    We generalize the concept of a norm on a vector space to one of a norm on a category. This provides a unified perspective on many specific matters in many different areas of mathematics like set theory, functional analysis, measure theory, topology, and metric space theory. We will especially address the two last areas in which the monotone-light factorization and, respectively, the Gromov-Hausdorff distance will naturally appear. 

In our formalization a Schröder-Bernstein property becomes an axiom of a norm which constitutes interesting properties of the categories in question. The proposed concept provides a convenient framework for metrizations.    
\end{abstract}

\etocruledstyle[2]{\scshape{} Contents }


\etocsetnexttocdepth{subsubsection}
\begingroup
\etocsetlevel{section}{0}
\etocsetlevel{subsection}{1}

\tableofcontents
\endgroup

\pagebreak

\section{Introduction}\label{sec:intro}
Norms form a corner stone of any quantitative research and belong to the foundational notions of most areas of mathematics.
The subject of this paper is the generalization of the concept of a norm on a vector space or, more generally, on an abelian group to one on a category.

Our attempt is preceded by many approaches using enriched monoidal categories going back to \textcite{Lawvere73} and an industry arising out of this \cite{Grandis07,Neeman20}.
Independently, \textcite{GhezLimaRoberts85} suggested a notion of normed *-category.
Recently, interleaving distance has gained some favour as a playing ground \cite{Scoccola20,BubenikdeSilvaScott18}.
Another recent approach outside of enriched or monoidal category theory is by \textcite{Kubis17, Perrone23}.

Our motivation in the present work is threefold, aspiring a framework
\begin{enumerate}
    \item\label{lst:motivation_CSB}
        to find analogs of the Cantor-Schröder-Bernstein theorem (CSB theorem) from set theory in other categories,
    \item\label{lst:motivation_metrization}
        for systematic and convenient metrization of families of equivalence classes of spaces, like the Gromov-Hausdorff space, moduli spaces, and representation spaces,
    \item 
        to prove general theorems in the developed categorical framework that give insights and concrete useful applications in many different areas of mathematics,
    \item\label{lst:motivation_extensionMorphisms}
        to work with categories with large classes of morphisms.
\end{enumerate}
The guiding example of our approach is the following norm on the category of sets:
To each function $f\colon X \to Y$ assign the non-negative extended real number
\begin{equation}
    \Norm{f}{inj} \coloneqq \supBd_{x \in X} \log( \# \setBuilder{y}{f(x)= f(y)})
\vspace{-1mm}
\end{equation}
where $\# $ assigns to a set the numbers of its elements (a member of $ \{0, 1, \ldots, \infty\} $) and $
    \supBd[a]\limits_{x\in X} f(x) = \sup \{a\} \cup \setBuilder{f(x)}{x \in X} 
$. 
Note that $f$ is injective if and only if $\Norm{f}{inj} = 0$.
This is to say that $\Norm{}{inj}$ is measuring the deviation from being injective.
Hence the idea for the generalization of the CSB theorem is that in a normed category $(\cat, \norm{})$ two objects $X$ and $Y$ are isomorphic as soon as there are morphisms $f\colon X \to Y$ and $g\colon Y \to X$ such that $\norm{X} = \norm{Y} = 0$.

Returning to our list of motivations, we mention with regard to the first one that the CSB theorem is a fundamental theorem in set theory stating that there is a bijection between two sets as soon as there are injective maps between the sets both ways. In conceptual terms it states that if $X$ can be embedded into $Y$ and vice-versa, then $X$ and $Y$ are isomorphic. A direct formalization of this conceptual idea in category theoretic terms would be the property that in a category $\cat$ two objects $X,Y \in \cat_0$ are isomorphic as soon as there are monomorphisms $X\to Y$ and $Y\to X$. 
But, unfortunately, most categories considered in practice do not have this property, cf.\ \cite{Laackman10}. 
Though, there are some notable exceptions, which include measure spaces \cite[\S~3.3]{Srivastava98} and a noncommutative version thereof for von Neumann algebras \cite[Proposition~6.2.4]{KadisonRingrose97}.
Efforts to generalize the CBS theorem in alternative set-ups have recently revived, including results for categories of universal algebras \cite{Freytes19} and in homotopy type theory and boolean $\infty$-topoi \cite{Escardo20} including a formalization in Agda \cite{Escardo20agda}. Of course, one can also weaken the property by replacing ``monomorphism'' by some stronger or related notion of morphism. In our approach it will be a morphism with vanishing norm.

As for the second motivation, the problems with doing metrizations in practise are often that they become very technical, involve arbitrary choices, and basic properties like the triangle inequality or completeness become hard to check. The theory of uniform structure can be seen as an attempt to abstract from these choices, but it lacks a measure of the size of entourages. In many examples we present it will turn out that the norms can be defined in terms of a capacity, a quantity measuring the size of a subobject of an object $X \in \cat_0$. 
It is natural to proceed by a category theoretical approach by looking at many examples:
representatives of a point (i.e.\ equivalence classes of spaces) are objects of a category and morphisms are comparison maps.

As for the last motivation, note that when working in some area of mathematics using the language of category theory  one often has to limit the class of morphisms under consideration. For instance the category of metric spaces is normally defined with morphisms to be non-expansive map in order to guarantee nice properties like existence of limits.

\subsection{Organization of the paper}

As for notation the reader is invited to consult \cref{sec:notation}.
We start the paper by introducing the basic definitions of seminorm, norm, and constructions based thereon (cf.\ \cref{sec:def}).
A categorically minded reader may enjoy complementing this perspective by a 2-categorical point of view elaborated in \cref{sec:2cat_viewPoint}.
Acquaintance with these definitions is facilitated in \cref{sec:examples} by a plethora of examples.
Thereafter we turn to the notation of capacities in \cref{sec:capacities} that will be the convenient framework for most seminorms under investigation in this paper. On this occasion we will also provide a first overview of seminorm central to our investigation in \cref{ssec:crucialSeminorms}.

\Cref{sec:top} addresses the category of topological spaces.
We will introduce a seminorm $\Norm{}{top} = \Norm{}{comp} + \Norm{}{dim}$, where $ \Norm{}{comp} $ measures the increase in the number of connected components when pulling back a subset of the codomain to a subset of the domain and $ \Norm{}{dim} $ measures the increase in dimension under such a transition. In other terms these seminorms measure the deviation of a map from being monotone or light respectively. The classic monotone-light factorization theorem implies that, when restricting to compact metrizable spaces, $\Norm{}{top}$ is a norm.

The subject of \cref{sec:Met} is the category of compact metric spaces $\Cat{Met}$ endowed with all multi-valued set theoretic maps as morphisms.
The notion of dilatation gives rise to the seminorm 
\[
                \Norm{f}{diam}
    =           \supBd \setBuilder{\spMet(x, x') - \spMet*(y, y')}{x, x' \in M, y\in f[x], y'\in f[x']}
\text.
\vspace{-1mm}
\]
Generalizing a classical theorem of Feudenthal and Hurewicz we show that $\Norm{f}{diam}$ is a norm on $\Cat{Met}$. Moreover the metric $\Dist[diam]^+$ induced by this norm is almost the Gromov-Hausdorff distance $\Dist[GH]$; to be precise the identity map 
\[
    (\{\substack{\text{isometry classes of}\\\text{compact metric spaces} }\}, \Dist[GH])
    \to
    (\{\substack{\text{isometry classes of}\\\text{compact metric spaces} }\}, \Dist[diam]^+)
\]
is 2-Lipschitz with Cauchy continuous inverse.

\subsection{Future work}
The next steps in our investigation outline a follows:
\begin{itemize}
    \item 
        generalize \cref{lem:almostExpansiveEndo}---which goes back to Freudenthal and Hurewicz---to categories with a capacity. An assumption will be that for every $\obj \in \cat_0$ there is a map $N \to \operatorname{Sub}(X)$ compatible with $c\colon \operatorname{Sub}(X) \to [0, \infty]$
    \item 
        find various applications thereof, e.g. to the category of metric measure spaces.
    \item 
        starting with a normed category $\cat$ define a normed completion of the category consisting ind-objects and norm-converging pro-ind-limits of morphisms: Before taking a limit we take the under category $T/\cat$---$T$ being the terminal object--- in order to fix a base point. 
        Then objects are defined by some directed index category $\cat[I]$ and a morphism $\vec{\obj}\colon \cat[I] \to T/\cat$ satisfying the Cauchy condition (compare \cite[Def.~3.3]{Kubis17}):
        \[
            \forall \varepsilon > 0\colon
            \exists i_\varepsilon\colon
            \forall i\to i' \text{ with } i \geq i_\varepsilon\colon
            \norm{\obj_{i\to i'} } , \varepsilon
        \text.
        \]
        As a motivation why this Cauchy condition is impose consider, for instance, in the case of the Lipschitz norm \cref{eq:snorm_Lip_teaser} that the diagram $\obj_n = ([0,1], \frac{1}{n} \Dist[$[0,1]$] ), \obj_{n\to m} = \id_{[0,1]} 
        $ should be ruled out as an object in the completion since this would be only a pseudometric space leading to division by zero in \cref{eq:snorm_Lip_teaser}.
        
        Finally, the set of morphisms between $\vec\obj\colon \cat[I] \to T/\cat $ and $\vec{\obj*}\colon \cat[J] \to T/\cat $ is given by all $ f_j^i \in
            \hom{\vec\obj}{\vec{\obj*}} = \lim_i \colim_j \hom{X_i}{Y_j}
        $ such that $ \limsup_{ij} \norm{f_j^i}^{*\mathrm{R}} \leq \limsup \norm{f_j^i} < \infty $.
        This condition corresponds to Kubi\'s's axiom (N3) (cf.\ \cref{rem:Kubis}).
    \item 
        Define a norm on this category by means of a Choquet style integral:
        For a directed set $ I = (I, \leq) $ and an order preserving function $F\colon I\to [0,1]$, thought of as the distribution of a probability measure, set for $f \in \hom{\vec\obj}{\vec{\obj*}} $
        \begin{align*}
                \int f(i)  \,  \mathrm{d} \dot{F}
            &\coloneqq
                \int 1 - F (\sup \setBuilder{i}{f(i) \leq t}) \,  \mathrm{d} t
        \intertext{where}
                f(i) 
            &\coloneqq
                \inf \setBuilder{ \norm{g} }{ g \in \hom{X_i}{Y_j} \text{ with } \iota_{ij} (g) = \proj_i f }
        \end{align*}
        where $ \iota_{ij} $ is the universal map $ \hom{X_i}{Y_j} \to \colim_{j\in I} \hom{X_i}{Y_j} $.
        In the example $ ( \Cat{Met}, \Norm{}{diam} ) $ of metric spaces this corresponds to the pointed Gromov-Haus\-dorff distance.
    \item 
        Generalize the notion of a normed category to $2$-categories: 
        Require the norm to be a 2-morphism and weaken \cref{lst:def:norm1} by requiring only being isomorphic up to 2-morphisms. This generalization should for instance capture coarse structure.
\end{itemize}

\subsection{Versions of this paper}

Changes from version 2 to version 3:
\begin{itemize}
    \item \cref{lem:capacities_Galois} added.
    \item Approach to compact metric spaces and definition of $\Norm{}{diam}$ changed (formerly confusingly denoted by $\Norm{}{dil}$).
    \item \Cref{ssec:automatons}, example of automatons, added.
\end{itemize}

\subsection{Acknowledgements}

The authors thank Paolo Perrone for his hints on the literature.

\section{Definitions}
    \label{sec:def}
\begin{definition}\label{def:seminorm}
    A \definiendum{seminorm} on a category $\cat$ is a function $ \norm{} \colon \cat_1 \to [0,\infty]$ such that
    \begin{enumerate}[label=(N\arabic*)]
        \item\label{lst:def:seminorm1}
            $ \norm{{\id_X}} = 0 $ for every object $X\in \cat_0$,
        \item\label{lst:def:seminorm2} 
            $ \norm{f \comp* g}{}{} \leq \norm{f}{}{} + \norm{g} $
            (triangle inequality).
    \end{enumerate}
    The tuple $ (\cat, \norm{}) $ is called a \definiendum{seminormed category}.
\end{definition}

Note that in the literature a seminorm is often called a norm \cite{Lawvere73,Grandis07}.
Moreover, note that we do not require the obvious strengthening of \cref{lst:def:seminorm1}, namely that the seminorm of every categorical isomorphism vanishes.
An explanation how to view this as a generalization of a seminorm on a vector space is found in \cref{ssec:GrothNorm}.
An isomorphism $f\colon X \to Y $ with inverse $g\colon Y \to X $ is called a \definiendum{norm isomorphism} if $\norm{f} = \norm{g} = 0$. By \cref{lst:def:seminorm2} being norm isomorphic is an equivalence relation.  
Moreover any morphism with norm zero is called a \definiendum{modulator}.
Often the category with objects $\cat_0$ and all modulators of $ \cat_1 $ as morphisms has good categorical properties. We will denoted it by $\boxed{M}(\cat, \norm{})$.
Two seminormed categories are called \definiendum{isomorphic} if there is a norm preserving categorical isomorphism between them.
A seminorm or norm, respectively, induces a seminorm or norm, resp., on the opposite category in the obvious way.
    
\begin{definition}\label{def:norm}
    A seminorm is called a \definiendum{norm} if 
    for all objects $X, Y$ the following holds
    \begin{enumerate}[resume,label=(N\arabic*)]%
    \setcounter{enumi}{2}
        \item\label{lst:def:norm1}
            if there are modulators $f\colon X \to Y$ and $g\colon Y \to X$, then $X$ and $Y$ are norm isomorphic; and
        \item\label{lst:def:norm2}
            if for all $ \varepsilon > 0 $ there is $ f\colon X \to Y $ with $\norm{f}\leq \varepsilon$, then there is a modulator $ f\colon X \to Y $.
    \end{enumerate}
\end{definition}    

The way to view \cref{lst:def:norm1} is that a form of CSB theorem holds. The moral idea is that $\norm{f} = 0$ is a property that is stronger than being monic and $\norm{}$ measures the deviation from this property.

\subsection{Induced norms and distances}
\begin{subequations}
Let 
$ (\cat, \norm{}) $ be a seminormed category.
We define the \definiendum{left dual seminorm} as
\begin{align}\label{eq:rightdualseminorm}
        \norm{f}^{*\mathrm{L}}
    \coloneqq
        \supBd_{f'}\left( \norm{f'} - \norm{f'\comp* f}\right)
\text,
\intertext{where $ \smash{X' \xrightarrow{f'} X \xrightarrow{f} Y }$, and the \definiendum{right dual seminorm} as}
\label{eq:leftdualseminorm}
        \norm{f}^{*\mathrm{R}}
    \coloneqq
        \supBd_{f''}\left( \norm{f''} - \norm{f\comp* f''}\right)
\end{align}
where  $ \smash{X \xrightarrow{f} Y \xrightarrow{f''} Y'} $.
\end{subequations}
The seminorm $\norm{}$ is called \definiendum{left reflexive} if $\norm{}^{*\mathrm{L}*\mathrm{L}} = \norm{}$ and \definiendum{right reflexive} if $\norm{}^{*\mathrm{R}*\mathrm{R}} = \norm{}$. 
As opposed to the case of normed spaces, the dual in our case does not define an entirely new category but merely a new norm on the same category.

To check that the left dual and right dual seminorms are actually seminorms observe for \cref{lst:def:seminorm1} that $ 
    \norm{{\id}}^{*\mathrm{L}} = \supBd \norm{f'} - \norm{f'\comp* {\id}} = 0
    = \supBd \norm{f'} - \norm{{\id}\comp* f'} = \norm{{\id}}^{*\mathrm{R}}
$. 
We show that \cref{lst:def:seminorm2} holds for left duals and then apply that in the opposite category to show that it holds for right duals.  To this end, 
observe that for any diagram $ X \xrightarrow{f} Y \xrightarrow{g} Z $
\begin{align*}
            \norm{f\comp* g}^{*\mathrm{L}}
    &=      \supBd_{h'} \norm{h'} - \norm{ h' \comp* f \comp* g }
\\
   & =      \supBd_{h'} \norm{ h' \comp* f } - \norm{ h' \comp* f \comp* g } + \norm{h'} - \norm{ h' \comp* f }
\\
    &\leq   \supBd_{h'} \norm{h' } - \norm{ h' \comp* f } + \supBd_{h'} \norm{h'} - \norm{ h' \comp*  g }
\\
    &=   \norm{f}^{*\mathrm{L}} + \norm{g}^{*\mathrm{L}}
\text.
\end{align*}
These arguments transfer to the right dual by the fact that the seminorm induced by the right dual on the opposite category coincides with the left dual of norm induced on the opposite category by the original norm.

\begin{remark}\label{rem:Kubis}
    \Textcite{Kubis17} defines a norm in our terminology as a seminorm $\norm{}\colon \cat_0 \to [0,\infty]$ such that $ \norm{}^{*\mathrm{L}} \leq \norm{} $. He defines a completion of a category with respect to such a norm and proves a version of Banach's fixed point theorem in this set-up.
\end{remark}

On the class of norm isomorphism classes of objects of $\cat$, $\skeleton_0(\cat, \norm{}) $,\footnote{Note the foundational remark in the introduction. In many examples $\skeleton_0(\cat, \norm{}) $ admits a set of representatives.}
we define the \definiendum{pqmetric} or \definiendum{pseudoquasimetric induced by $\norm{}$}
\begin{equation}
        \Dist_{\norm{}}(\Hat{X}, \Hat{Y})
    \coloneqq
        \inf\setBuilder{\norm{f}}{f \in\hom X Y \text{ for } X \in \Hat{X}, Y \in \Hat{Y}}
\text{.}
\end{equation}
Note that by \cref{lst:def:seminorm2} for the computation of $\Dist_{\norm{}}(\Hat{X}, \Hat{Y})$ it is sufficient to look at fixed representatives of $\Hat{X}$ and $\Hat{Y}$.
Observe further that this is indeed a pqmetric since $\Dist_{\norm{}}(X, X) \leq \norm{\id_X} = 0$ and the triangle inequality holds by
\begin{align*}
        \Dist_{\norm{}}(X, Z)
    &=
        \inf\setBuilder{\norm{f}}{f\colon X \to Z}
\\
    &\leq
        \inf\setBuilder{\norm{f_1\comp*f_2}}{f_1\colon X \to Y, f_2\colon Y \to Z}
\\
    &\leq
        \inf\setBuilder{\norm{f_1}+\norm{f_2}}{f_1\colon X \to Y, f_2\colon Y \to Z}
\\
    &=
        \Dist_{\norm{}}(X, Y) + \Dist_{\norm{}}(Y, Z)
\end{align*}
for all $X,Y,Z \in \cat_0$.
\begin{subequations}
Symmetrizing in some way gives a pseudometric, e.g.\ by
\begin{align}
    \Dist_{\norm{}}^{\vee}(X, Y) &\coloneqq  \Dist_{\norm{}}(X, Y) \vee \Dist_{\norm{}}(Y, X)
\\
        \Dist_{\norm{}}^{+}(X, Y)   
    &\coloneqq  
        \frac{1}{2} \left(\Dist_{\norm{}}(X, Y) + \Dist_{\norm{}}(Y, X)\right)
\\
        \Dist_{\norm{}}^p(X, Y) 
    &\coloneqq 
        \sqrt[\leftroot{2}\uproot{3} \scriptstyle p]{\Dist_{\norm{}}(X, Y)^p + \Dist_{\norm{}}(Y, X)^p} 
\end{align}
for $p\in [1, \infty)$.
If $\norm{}$ is actually a norm, so $ \Dist_{\norm{}} $ is a metric.
\end{subequations}

\begin{workingNotes}
\subsection{Construction}
\todo{work in progress}
In practise a norm if often initially only defined on a category $\cat $ of which we think of as consisting compact objects. From this category we want to define a norm on maps between spaces that can be exhausted by compact spaces. In categorical terms the exhaustion process is captured by forming an ind-object, i.e. a colimit along a directed system.

\begin{definition}\label{def:properSpaces}
    Given a seminormed category $(\cat, \norm{})$ with terminal object $T$ we define the seminormed category $\Cat*{proper-}(\cat, \norm{})$ by the following data:
    \begin{itemize}
        \item Objects are inductive systems $\cat{I} \to T / \cat$, also known as ind-objects or formal filtered colimits \textbf{modulators?}. They are denoted by $\vec{X}, \vec{Y},$ etc.
        \item Morphisms are filtered limits of filtered colimits of hom-sets in $\cat$, i.e.
        \[
          \hom{\vec X}{\vec Y}
          =
          \lim \colim \hom{X_i}{X_j}
        \]
        \item the norm is given by the Choquet integral

        ...
    \end{itemize}
\end{definition}
\todo{Remark on Choquet integral}

\begin{proposition}
    The above defined function on $\Cat*{proper-}\cat$ is indeed a seminorm.
\end{proposition}
\end{workingNotes}

\section{Canonical examples}
    \label{sec:examples}
\subsection{Sets}\label{ssec:Sets}

On the category $\Cat{Set}$ of sets we define for a function $f \colon X \to Y $ the norm $\Norm{}{set}$ measuring the deviation of a function from being injective: we set
\begin{equation}
    \label{eq:norm_set_Y}
    \Norm{f}{set} = \log \supBd[1]_{y\in Y} \# f^*(\{y\})
\text.
\vspace{-1mm}
\end{equation}
We check that $ \Norm{}{set} $ is a norm.
For the seminorm properties observe that $\Norm{\id_X}{set} = 0$ for any set $X$. 
Moreover the triangle inequality is satisfied as it holds trivially whenever $\Norm{f}{set} = \infty$ or $ \Norm{g}{set} = \infty$ and otherwise---using \cref{eq:norm_set_Y}---
\begin{align*}
        \Norm{f\comp* g}{set}
    &=   \log\supBd[1]_{z\in Z} \# (f\comp*g)^*\{z\}
\\
    &=   \log\supBd[1]_{z\in Z} \# \bigcup \setBuilder*{f^*(\{y\})}{y \in g^* \{z\}}
\\
    &\leq
        \log \left(\supBd[1]_{y\in Y} \# f^*(\{y\}) \cdot \supBd[1]_{z\in Z} \# g^* \{z\}\right)
\\
    &=
        \Norm{f}{set} + \Norm{g}{set}
\text.
\end{align*}
Hence $\Norm{}{set}$ is a seminorm.
We continue with the norm properties: $\Norm{f}{set} = 0$ is to say that $f$ is injective.
Property \cref{lst:def:norm1} is exactly the Schröder-Bernstein theorem.
The property \cref{lst:def:norm2} is trivial since the image of $\Norm{}{set}$ is discrete.

\subsection{Simplicial complexes}

We can use the same norm $\Norm{}{set}$, from the previous subsection, on simplicial complexes.
Recall that an (unoriented) simplicial complex on a set $V$ is a pair $(V, X)$ where $X$ is a subset of $\powerSet(V)$ such that
\begin{enumerate}
    \item each $s\in X$ is finite and nonempty,
    \item $\{x\}\in X$ for each $x \in V$, and
    \item $s' \in X$ for all $s' \subseteq s \in X$ and $s'\neq \emptyset$.
\end{enumerate}
The elements of $V$ are called vertices.
A set $s\in X$ is called simplex, and $n$-simplex if $s$ has $n$ elements. Further let $X_n$ denote the set of all $n$-simplices in $X$. A simplicial complex is called finite whenever it is finite as a set.
A morphism of simplicial complexes is a function $f\colon V \to W$ such that $(f_*)_*(X) \subseteq Y$.
Let $\Cat{SimpCplx}$ denote the category of simplicial complexes and morphisms of simplicial complexes. Let $\Cat{FinSimpCplx}$ denote the full subcategory of finite simplicial complexes, i.e.\ simplicial complexes with a finite set of vertices (or equivalently a finite set of simplices).

\begin{proposition}
    The seminormed category $ (\Cat{FinSimpCplx}, \Norm{}{set}) $ is normed.
\end{proposition}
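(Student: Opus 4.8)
The plan is to verify the two seminorm axioms \cref{lst:def:seminorm1}, \cref{lst:def:seminorm2} and the two additional norm axioms \cref{lst:def:norm1}, \cref{lst:def:norm2} for $\Norm{}{set}$ on $\Cat{FinSimplCplx}$.

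The seminorm axioms come for free. A morphism of simplicial complexes \emph{is} a map on vertex sets, composition and identities in $\Cat{FinSimplCplx}$ are computed on these underlying vertex maps, and $\Norm{f}{set}$ depends only on the vertex map of $f$. Thus the forgetful functor $\Cat{FinSimplCplx}\to\Cat{Set}$ sending a complex to its vertex set preserves $\Norm{}{set}$ on the nose, so \cref{lst:def:seminorm1} and \cref{lst:def:seminorm2} transfer verbatim from the computation carried out in \cref{ssec:Sets}.

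Next, \cref{lst:def:norm2} follows from discreteness of the range. Since a finite simplicial complex has a finite vertex set, every set $f^*f_!(\{x\})$ is finite and contains $x$, so each value $\Norm{f \mid x}{set}$ is the logarithm of a positive integer, and hence $\Norm{f}{set}\in\{0,\log 2,\log 3,\dots\}$, a set whose only accumulation point is $\infty$; in particular $0$ is isolated. Therefore, if for every $\varepsilon>0$ there is a morphism $f\colon X\to Y$ with $\Norm{f}{set}\le\varepsilon$, then choosing $\varepsilon<\log 2$ forces $\Norm{f}{set}=0$, i.e.\ $f$ is a modulator.

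The heart of the matter is \cref{lst:def:norm1}, and this is the step I expect to be the main obstacle. A modulator $X\to Y$ is exactly a morphism whose vertex map is injective. Given modulators $f\colon X\to Y$ and $g\colon Y\to X$, comparing cardinalities of the finite vertex sets shows that the vertex maps of $f$ and of $g$ are in fact \emph{bijections}. One is tempted to conclude at once that $f$ is an isomorphism, but that is false in general: a bijective vertex map underlying a morphism need not be an isomorphism of simplicial complexes, because the codomain may carry strictly more simplices than the domain. The fix is to exploit finiteness. The endomorphism $e\coloneqq f\comp* g\colon X\to X$ is a morphism whose vertex map is a permutation of the finite vertex set of $X$, hence of some finite order $n$, so the $n$-fold self-composite of $e$ equals $\id_X$; writing $e'$ for the $(n-1)$-fold self-composite — again a morphism $X\to X$ — its vertex map is the inverse of the permutation underlying $e$. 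Chasing this through, the set-theoretic inverse of the vertex map of $f$ coincides with the vertex map of $g\comp* e'\colon Y\to X$, a composite of morphisms of simplicial complexes; hence $f^{-1}$ is itself a morphism of simplicial complexes. So $f$ is an isomorphism in $\Cat{FinSimplCplx}$ with both $f$ and $f^{-1}$ having injective vertex maps, i.e.\ vanishing $\Norm{}{set}$, and therefore $X$ and $Y$ are norm isomorphic. The argument uses finiteness essentially — permutations of infinite sets need not have finite order — which is precisely why the statement is confined to the full subcategory of finite complexes.
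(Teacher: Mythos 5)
Your proof is correct. The outer structure matches the paper's exactly: \cref{lst:def:norm2} from discreteness of the value set, the seminorm axioms inherited from $\Cat{Set}$, and for \cref{lst:def:norm1} the observation that modulators are injective vertex maps which finiteness upgrades to bijections. You diverge at the one genuinely nontrivial point, namely why a morphism with bijective vertex map is an isomorphism of complexes (a trap you rightly flag: the codomain could a priori carry extra simplices). The paper resolves this at the level of simplices: an injective vertex map sends $n$-simplices to $n$-simplices injectively, so the finite sets $X_n$ and $Y_n$ have equal cardinality, $f_*$ is a bijection $X_n \to Y_n$ for every $n$, and this surjectivity on simplices is exactly what makes the inverse vertex map simplicial. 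You instead stay on the vertex level: the permutation underlying $e = f\comp* g$ has finite order, so its $(n-1)$-fold self-composite is a morphism $X\to X$ inverting $e$ on vertices, and composing it with $g$ produces a composite of simplicial morphisms whose vertex map is $f^{-1}$; since a morphism of simplicial complexes is nothing but its vertex map, $f^{-1}$ is a morphism. Both arguments are sound and both use finiteness essentially, just in different places (finiteness of each $X_n$ versus finite order of a permutation of the vertex set). The paper's version is more transparent about which structure is being matched --- the simplex counts in each dimension --- while yours is a slicker categorical device that transfers verbatim to any concrete category whose morphisms are determined by maps of finite underlying sets; it does, however, obscure the combinatorial reason the complexes agree.
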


\begin{proof}
    The axiom \cref{lst:def:norm2} is trivial since the image of $\Norm{}{set}$ is discrete.
    For \cref{lst:def:norm1} assume that $(V, X)$, $(W, Y)$ are two simplicial complexes and $ f\colon V \to W $, $g\colon W \to V$ morphisms of simplicial complexes with $\Norm{f}{set} = \Norm{g}{set} = 0$. Thus $ f $ and $g$ are injective. Hence $f_*$ and $g_*$ are injective. This is to say that both functions map $n$-simplices to $n$-simplices for every $n$. Hence both complexes have the same number of $n$-simplices for every $n$. Since for each $n$ the set of simplices $X_n$ is finite and $f_*$ is injective, the map $f_*$ is actually bijective. Hence both simplicial complexes are norm isomorphic.
\end{proof}

\subsection{Automatons}
    \label{ssec:automatons}
By an automaton we understand a triple $M = (S, A, \delta)$ where $S$ and $A$ are sets and $\delta$ is a relation on $S \times A \times S$. The set $S$ is interpreted as states; $A$ as action and $\delta$ as a (non-deterministic) transition rule.

This gives naturally rise to the following normed category:
\begin{align*}
    \cat[M]_0 &\coloneqq S
\text;
\\
    \hom[M]{s}{t} &\coloneqq \setBuilder*
                        {\text{words } a_1\ldots a_n \text{ in } A}
                        { \substack{ \exists s_0, s_1, \ldots, s_n \in A\colon \\ (s_0, a_1, s_1), \ldots, (s_{n-1}, a_n, s_n) \in \delta \\ \text{ and } s_0 = s, s_n = t }  }
\text,
\intertext{note that $\hom[M]{s}{s}$ always contains the empty word by choice $s_0 = s$;}
    a_1\ldots a_n \comp* b_1\ldots b_m &\coloneqq a_1\ldots a_n b_1\ldots b_m
\text;
\\
    \norm{a}_M &\coloneqq \operatorname{length}(a)
\text.
\end{align*}

\subsection{Cost functions and polarization}
        \label{ssec:metricSp} 
    Let $c$ be a \definiendum{cost function} on $\spSet$, i.e.\ a map $c\colon \configurations[2] \spSet \to [0,\infty] $ where the configuration space $\configurations[2]( \spSet) $ is defined as $ \setBuilder{(x,y) \in \spSet^{\times 2} }{x\neq y} $. Cost functions form the corner stone of transportation theory \cite{Villani08}. Remember that a square-free word is a word in  which no pattern of  the form $xx$  occurs. Define the normed category $(\cat[\spSp], \norm{}_c)$ by
    \begin{align*}
        \cat[\spSp]_0 &\coloneqq \spSet,
    \\
        \cat[\spSp]_1 &\coloneqq \{\text{square-free words over the alphabet }\spSet \},
    \\
        \hom[\spSp]{x}{y} &\coloneqq \setBuilder{w\in \cat[\spSp]_1}{ w \text{ starts with $x$ and ends with $y$} }
    \\
            (x\xi_1\ldots \xi_n y) \comp* (y \eta_1 \ldots \eta_m z)
        &\coloneqq  
            (x\xi_1 \ldots \xi_n y \eta_1 \ldots \eta_mz)
    \\
        \id_x &\coloneqq (x),
    \\
        \norm{(\xi_1\ldots \xi_n)}_c
        &\coloneqq  
            c(\xi_1, \xi_2) + \ldots + c(\xi_{n-1}, \xi_n)
    \end{align*}
    for $(\xi_1\ldots \xi_n)\colon x \to y$.
    This obviously defines a seminorm since the triangle inequality \cref{lst:def:seminorm2} is even an equality.
    
    Moreover this construction induces a qpmetric on $\spSet$, namely $\Dist_c \coloneqq \Dist_{\norm{}_c}$.
    This is automatically a pmetric since any path from $x$ to $y$ can be transformed into a morphism from $y$ to $x$ of the same length by reversing.
    We have $\Dist_c \leq c$ and equality holds if and only if the extension of $c$ to $ \spSet  \times \spSet $ by 0 is a metric. In other words, $\Dist_c$ is the largest pseudometric bounded by $c$.

\begin{exerciseSolved}
    Now assume that $\spSp = (\spSet, \spMet)$ is a metric space. Then $(\cat[\spSet], \norm{}_c)$ with $ c(x,y) = \spMet(x,y) $ is a normed category: for \cref{lst:def:norm1} observe that a morphism $w = (\xi_1\ldots \xi_n)\colon x \to y$ has  vanishing norm only if $
        \dist{x}{\xi_1} = \dist{\xi_1}{\xi_2} = \ldots = \dist{\xi_{n-1} }{\xi_n} = \dist{\xi_n}{y} = 0
    $ and, hence, $x= \xi_1 = \ldots = \xi_n = y$. Hence $w$ is a word of length one. For \cref{lst:def:norm2} observe that the norm of any morphism $x\to y$ is bounded from below by $\dist{x}{y}$.
    All norm isomorphism classes are singletons.
    
    The left and right duals of $\norm{}_c$ vanish. Both cases are parallel. In case of the right dual---for instance---observe that any composition $f\comp*f'$  has norm at least  $\norm{f'}$. Hence the norm $\norm{}_c$ is only reflexive in the trivial case that $\spMet$ is the vanishing distance.
\end{exerciseSolved}    
    
\begin{exercise}
\begin{subenvironments}
    \begin{xca}
        Assume that $\spSp = (\spSet, \spMet)$ is a metric space. Show that $(\cat[\spSet], \norm{}_c)$ with $ c(x,y) = \spMet(x,y) $ is a normed category
    \end{xca}
    \begin{xca}
        Let $c$ be a cost function. Show that $ \norm{}_c^{*\mathrm{L}} = \norm{}_c^{*\mathrm{R}} = 0 $.
    \end{xca}
\end{subenvironments}
\end{exercise}
    
\begin{Banach}
    Assume, in addition to being a metric, that $\spMet$ is complete.
    Let $\cat[I]$ be a directed set.
    Consider an inductive system $ D\colon \cat[I] \to \cat[\spSp]$ or projective system $D\colon \cat[I]^{\textnormal{op}} \to \cat[\spSp] $.
    Actually, both cases are the same as any word $(\xi_1\ldots\xi_n)$ can be considered as a word from $x$ to $y$ or from $y$ to $x$ without effecting its norm.
    Assume that the inductive system $D$ Cauchy converges.
    Hence we can find a sequence of morphisms $D_1(i_n, j_n)$ such that for all $n$
    \begin{itemize}
        \item $\norm{D_1(i_n, j_n)} < \nicefrac1n$,
        \item $\norm{D_1(i, j)} < \nicefrac1n $ for all $j_n \leq i \leq j$, and
        \item $ j_n \leq i_{n+1} $.
    \end{itemize}
    This is to say that the sequence $D_0(j_n)$ in the metric space $\spSp$ Cauchy converges as $\dist{D_0(j_n)}{D_0(j_{n+m})} < \nicefrac1n$.
    The limit point $x$ of this sequence together with morphisms $\emptyset\colon x \to D_0(i)$ is a weak colimit and weak limit. Hence $\cat[\spSp]$ is a Banach category.
\end{Banach}

\subsection{Grothendieck norm for monoids}\label{ssec:GrothNorm}
Let $M = (M, +, 0)$ be a (not necessarily commutative) monoid.
Further let $\norm{}_M$ be a seminorm on $M$, i.e.\ a map $M\to [0,\infty]$ such that $\norm{a+b} \leq \norm{a} + \norm{b}$.
In the spirit of the previous example we define the category $\cat[M]$ and the \definiendum{Grothendieck seminorm} $\norm{}$ on $\cat$ by
\begin{align*}
    \cat[M]_0 &\coloneqq M
\\
        \hom{a}{b}[\cat[M]] 
    &\coloneqq 
        \setBuilder
            {(f_+, f_-) \in M^{\times 2}}
            {f_+ + a = b + f_-}
\\
        (f_+, f_-) \comp* (g_+, g_-)
    &\coloneqq
        (g_+ + f_+, g_- + f_-)
\intertext{for $
    (f_+, f_-)\colon a \to b$ and $
    (g_+, g_-)\colon b \to c
$ (well-defined since $
        g_+ + f_+ + a 
    =   g_+ + b + f_- 
    =   c + g_- + f_-
$)}
        {\id_a}
    &\coloneqq
        (0, 0)
\qquad\text{for all } a \in \cat[M]_0
\\
    \norm{(f_+, f_-)} &\coloneqq \norm{f_+}_M + \norm{f_-}_M
\text{.}
\end{align*}
This defines indeed a seminorm since $
        \norm{(f_+, f_-)\comp* (g_+, g_-)} 
    =   \norm{(g_+ + f_+, g_- + f_-)}
    \leq\norm{g_+}_M+\norm{f_+}_M+\norm{g_-}_M + \norm{f_-}_M
    =   \norm{(f_+, f_-)} + \norm{(g_+, g_-)}$.

\begin{exerciseSolved}
    If $\norm{}_M$ is a norm (i.e. $\norm{a}_M = 0$ if and only if $a=0$) then $ (\cat[M], \norm{}) $ is a normed category:
    For all $ (f_+, f_-) \in \cat[M]_1 $ we have that  $ \norm{(f_+, f_-)} = 0$ implies $ f_+ = f_- = 0$. Thus in this case  $\norm{}$ fulfils \cref{lst:def:norm1} on the category $\Cat{m}$.
    For \cref{lst:def:norm2} observe that $ \norm{f} < \varepsilon $ implies that $ \norm{a-b} = \norm{f_- - f_+} \leq \norm{f_-} + \norm{f_+} < \varepsilon $.
    Thus is for $a$ and $b$ and every $\varepsilon > 0$ there is $f\colon a \to b$ with $ \norm{f} < \varepsilon $, then $\norm{a-b} = 0$, thus $a = 0$.
    Since $a$ and $b$ we arbitrary, $\norm{}$ fulfills \cref{lst:def:norm2}. Thus $\norm{}$ is a norm.
\end{exerciseSolved}

\begin{exercise}
    \begin{xca}
        Assume that $\norm{}_M$ is a norm (i.e. $\norm{a}_M = 0$ if and only if $a=0$).
        Prove that $ (\cat[M], \norm{}) $ is a normed category.
    \end{xca}
\end{exercise}

\begin{proposition}\label{prop:GrothNorm_Gr}
    Assume that $G$ is a group with a seminorm $\norm{}_G$. Then
    \begin{claims}
        \item\label{claim:prop:GrothNorm_Gr_isom}
            a normed category canonically isomorphic to $(\cat[G], \norm{} )$ is defined by
            \begin{align*}
                \cat[G]_0' &\coloneqq G,
            \\
                \hom{a}{b}[\cat[G]'] &\coloneqq G,
            \\
                f\comp*' g &\coloneqq g - b + f,
            &&
                \text{for } f\colon a \to b \text{ and } g\colon b \to c,
            \\
                \norm{f}' &\coloneqq \norm{f}_G + \norm{-b+f+a}_G 
            &&
                \text{for } f\colon b \to c
            \text{;}
            \end{align*}
        \item\label{claim:prop:GrothNorm_Gr_met}
            assuming that $\norm{a}_M = \norm{-a}_M $, we have $\Dist_{\norm{}'}(a,b) = \norm{-b+a}_G$;
        \item\label{claim:prop:GrothNorm_Gr_duals}
             under the same assumption we have $\norm{}^{*\mathrm{L}} = \norm{}^{*\mathrm{R}} = \norm{}$.
    \end{claims}
\end{proposition}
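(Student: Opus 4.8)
My plan is to prove \cref{claim:prop:GrothNorm_Gr_isom} by exhibiting the canonical isomorphism concretely, and then to read off the remaining two claims from this description, using that the categories involved are groupoids. For \cref{claim:prop:GrothNorm_Gr_isom}, define $\Phi\colon \cat[G]' \to \cat[M]$ to be the identity on objects and, on $\hom{a}{b}[\cat[G]'] = G$, to send $f$ to the unique element of $\hom{a}{b}[\cat[M]]$ with first coordinate $f$, namely $\Phi(f) \coloneqq (f,\,-b+f+a)$. The checks are routine group algebra: this pair lies in $\hom{a}{b}[\cat[M]]$ since $f+a = b+(-b+f+a)$; $\Phi$ preserves identities; $\Phi$ preserves composition, the intermediate object cancelling in the second coordinate; and $\Phi$ is a bijection on each hom-set, with inverse $(f_+,f_-)\mapsto f_+$. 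Since $\norm{}'$ is by definition $\norm{\Phi(-)}$, the map $\Phi$ is an isomorphism of seminormed categories. It then suffices to see that $(\cat[M],\norm{})$ is \emph{normed} when $M=G$ is a group: $\cat[M]$ is a groupoid, so a modulator $f\colon a\to b$ is invertible, and by $\norm{-x}_G = \norm{x}_G$ applied to the coordinates of $f^{-1}$ its inverse is again a modulator, so $f$ is a norm-isomorphism --- this is \cref{lst:def:norm1}; and \cref{lst:def:norm2} follows from \cref{claim:prop:GrothNorm_Gr_met}, because the zero morphism $0\colon a\to b$ has norm $\Dist_{\norm{}'}(a,b)$ and hence is a modulator whenever this distance vanishes. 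Transporting the normed structure along $\Phi$ gives \cref{claim:prop:GrothNorm_Gr_isom}.

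For \cref{claim:prop:GrothNorm_Gr_met}, unwinding the definition of the induced pqmetric (and using that a single hom-set suffices) gives $\Dist_{\norm{}'}(a,b) = \inf_{f\in G}\big(\norm{f}_G + \norm{-b+f+a}_G\big)$. Taking $f = 0$ yields $\Dist_{\norm{}'}(a,b) \le \norm{-b+a}_G$. For the converse one rearranges $-b+f+a$ and combines the triangle inequality for $\norm{}_G$ with $\norm{-x}_G = \norm{x}_G$ to conclude $\norm{f}_G + \norm{-b+f+a}_G \ge \norm{-b+a}_G$ for every $f$. I expect this lower bound to be the main obstacle: the triangle inequality relates $\norm{-b+a}_G$ to $\norm{-b+f+a}_G$ plus the norm of a conjugate of $f$, so one needs that conjugate to have the same $\norm{}_G$-value as $f$; this is automatic when $G$ is abelian, and in the noncommutative case this is the step I would scrutinise most carefully.

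For \cref{claim:prop:GrothNorm_Gr_duals}, it is enough to work in $(\cat[M],\norm{})$ via $\Phi$. There $\cat[M]$ is a groupoid and $\norm{f^{-1}} = \norm{f}$, since $f^{-1} = (-f_+,-f_-)$ when $f = (f_+,f_-)$ and $\norm{-x}_G = \norm{x}_G$. For the left dual: given $f\colon X\to Y$ and any $f'\colon X'\to X$, the identity $f' = (f'\comp* f)\comp* f^{-1}$ together with \cref{lst:def:seminorm2} gives $\norm{f'} \le \norm{f'\comp* f} + \norm{f}$, hence $\norm{f'} - \norm{f'\comp* f} \le \norm{f}$ and so $\norm{f}^{*\mathrm{L}} \le \norm{f}$; conversely, $f' = f^{-1}$ gives $\norm{f^{-1}} - \norm{f^{-1}\comp* f} = \norm{f} - \norm{\id} = \norm{f}$, so $\norm{f}^{*\mathrm{L}} \ge \norm{f}$. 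Thus $\norm{}^{*\mathrm{L}} = \norm{}$, and the mirror-image argument with post-composition (taking $f'' = f^{-1}$) gives $\norm{}^{*\mathrm{R}} = \norm{}$. This last part is purely formal once the groupoid structure and $\norm{f^{-1}} = \norm{f}$ are in hand.
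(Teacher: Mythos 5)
Your argument follows the paper's own proof in all essentials: the same reparameterization functor $f \mapsto (f,\,-b+f+a)$ with inverse $(f_+,f_-)\mapsto f_+$ for \cref{claim:prop:GrothNorm_Gr_isom}, the same test morphisms $f=0$ and the triangle inequality for \cref{claim:prop:GrothNorm_Gr_met}, and the same use of invertibility ($f'=(f'\comp* f)\comp* f^{-1}$ for the upper bound, $f'=f^{-1}$ for the lower bound) for \cref{claim:prop:GrothNorm_Gr_duals}. The one step you flag as needing scrutiny is indeed the crux, and your suspicion is justified: the paper's proof of \cref{claim:prop:GrothNorm_Gr_met} rests on the chain $\norm{f}+\norm{-b+f+a}\geq\norm{-f-b+f+a}=\norm{-b+a}$, and the final equality uses the group identity $-f-b+f+a=-b+a$, which holds only when $f$ commutes with $b$ --- so the paper is implicitly in the abelian (or conjugation-invariant) setting, exactly the hypothesis you isolate. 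You have not missed an argument that the paper supplies. Two small points: your verification of \cref{lst:def:norm1} for \cref{claim:prop:GrothNorm_Gr_isom} invokes $\norm{-x}_G=\norm{x}_G$, an assumption the proposition only introduces in \cref{claim:prop:GrothNorm_Gr_met} (the paper sidesteps this by not re-proving normedness in this proposition at all, relying on the earlier discussion of $(\cat[M],\norm{})$); and your reduction of \cref{lst:def:norm2} to the vanishing of $\Dist_{\norm{}'}$ is fine but, like the paper's earlier argument, really needs $\norm{}_G$ to be a norm rather than a seminorm to conclude anything from $\norm{-b+a}_G=0$.
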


\begin{exercise}
    \begin{xca}Prove this theorem.\end{xca}
\end{exercise}

\begin{exerciseSolved}
    \begin{proof}
        For \cref{claim:prop:GrothNorm_Gr_isom} we will show that the isomorphism and its inverse are given by the reparameterization functors $ F$ and $ \bar F $ that are the identity on the objects set $G$ and on morphisms given by the assignments
        \[
            F_1       \colon (f_+, f_-)   \mapsto f_+             \quad\text{and}\quad 
            \bar F_1  \colon f            \mapsto (f, -b + f + a)
        \]
        Both maps are inverse to each other as $ f_- = - b + (f_+ + a) = \bar F(f_+) $.
        Hence we only have to check compatibility of $F$ and $ \comp*' $:
        $ 
                F(f_+, f_-) \comp*' F(g_+, g_-) 
            =   g_+ + f_+
            =   F( (f_+, f_-) \comp* (g_+, g_-) )
        $ for $ 
            a \xrightarrow{(f_+, f_-)} b \xrightarrow{(g_+, g_-)} c
        $.
        This concludes the proof that $ \cat[G]' $ defines a category isomorphic to $ \cat[G] $.
        The equality of the norms $\norm{}$ and $\norm{}'$ follows directly from the identity $f_- = - b + (f_+ + a)$.
        
        For \cref{claim:prop:GrothNorm_Gr_met} observe that for every two objects $a, b \in G$ we have $ 
            \Dist_{\norm{}'}(a,b) \leq \norm{0} + \norm{-b + 0 +a} = \norm{-b+a}
        $ and $
            \norm{f} + \norm{-b+f+a} \geq \norm{-f -b+f+a} = \norm{-b+a}
        $ for all $f,g\in G$. Hence $\Dist_{\norm{}'}(a,b) = \norm{-b+a}_G$.
        
        For \cref{claim:prop:GrothNorm_Gr_duals} observe for the left dual that for $f' = (f'_+, f'_-)$ we have $ 
                \norm{f'} - \norm{f'\comp* f} 
            =   \norm{f'\comp* f\comp* (-f)} - \norm{f'\comp* f} 
            \leq\norm{f'\comp* f} + \norm{-f} - \norm{f'\comp* f}
            =   \norm{f}
        $ and $ \norm{-f} - \norm{(-f) \comp* f} = \norm{f} $.
        Hence $\norm{}^{*\mathrm{L}} = \norm{}$.
        By the parallel argument $\norm{}^{*\mathrm{R}} = \norm{}$.
    \end{proof}
\end{exerciseSolved}

\Cref{claim:prop:GrothNorm_Gr_met} of \cref{prop:GrothNorm_Gr} is the motivation to consider a seminorm on a category a generalization of a seminorm on a vector space where the underlying abelian group takes the role of $G$.

\begin{example}[Word metric on a group]
    As an example of a norm on a monoid take generating set $\{g_i\}_{i\in I}$ of a group $G = (G, +, -(\blank), 0)$. Then the free monoid $M$ generated by $\bigcup_{i\in I} \{g_i, -g_i\}$ has the evaluation function $\operatorname{ev}\colon M \to G$. On $M$ there is a norm given on $m\in M$ by the minimal word length of some word $w$ over the alphabet $M$ such that $\operatorname{ev}(w) = m$.
    Word metrics are a fundamental tool in geometric group theory \cite{Loh17}.
\end{example}

\subsection{Operators and normed vector spaces}
    \label{ssec:NVect}

\begin{subequations}
Let $\Cat{NVect}_{\R}$ denote the category of normed vector spaces over the reals with linear maps as morphisms.
Define the norm of a linear map $A \colon V \to W$ as
\[
        \Norm{A}{op}
    \coloneqq
        \log \supBd[1]_{v\in V} \frac{\norm{v}_V}{ \norm{Av}_W }
\text{.}
\]
The pair $(\Cat{NVect}_{\R}, \norm{.})$ is a seminormed category.
Being norm isomorphic is equivalent to being isometric as linear normed spaces.
\end{subequations}

It is not a normed category and especially does not satisfy the CSB axiom \cref{lst:def:norm1}. 
\begin{exerciseSolved}
For an example consider the vector space $\Hoeld{0}([0,1])$ of real valued continuous functions on the unit interval $[0,1]$ vanishing at 0 and 1 and endow this space with the supremum norm $\norm{}$. Define the spaces
\begin{align*}
    V &\coloneqq \Hoeld{0}([0,1])
\\
        W 
    &\coloneqq 
        \Hoeld{0}([0,1]) \oplus \setBuilder{f\in \Hoeld{0}([0,1])}{f \text{ smooth}}
\end{align*}
where $W$ is endowed with the norm $\norm{(f,g)}_c \coloneqq \norm{f} \vee \norm{g}$. The space $W$ is not complete, but $V$ is, so these spaces are not norm isomorphic. But we can find expansions in both directions:
\begin{align*}
    f&\colon V \to W,
&
    f &\mapsto (f,0)
\\
    g&\colon W \to V,
&
        (f,g) 
    &\mapsto 
      \left(
          x 
        \mapsto 
          \begin{cases}
            f(2x)   & x \leq 1/2 \\
            g(2x)   & x \geq 1/2 
          \end{cases}
      \right)
\text{.}
\end{align*}
\end{exerciseSolved}
\begin{exercise}
\begin{xca}
    Find an example that violates \cref{lst:def:norm1}. Hint: take the vector space $\Hoeld{0}([0,1])$ of real valued continuous functions on the unit interval $[0,1]$ vanishing at 0 and 1 and endow this space with the supremum norm $\norm{}$. Define $ V \coloneqq \Hoeld{0}([0,1]) $ and $ W \coloneqq 
        \Hoeld{0}([0,1]) \oplus \setBuilder{f\in \Hoeld{0}([0,1])}{f \text{ smooth}}$.
\end{xca}
\end{exercise}

It's even not possible to ensure \cref{lst:def:norm1} by restricting to the category of Banach space, i.e.\ complete normed vector spaces. Corresponding examples are more complicated but one was found in a celebrated result by \textcite{Gowers96}.
On the other hand the fully faithful subcategory $\prescript{\textnormal{Hilb}}{}{\Cat{NVect}}_{\R}$ of $\Cat{NVect}_{\R}$ consisting of Banach spaces that admit an inner product (i.e.\ admit the structure of a Hilbert space).
Recall that the Hilbert dimension of a Hilbert space is defined as the cardinality of a basis. A basis is by definition a maximal orthonormal set $ E \subset V $, i.e.\ $\langle e, e' \rangle = 0$ for all $e,e'\in E$ with $e\neq e'$ and $\norm{e} = 1$ for all $e \in E $. Especially, $E$ is linearly independent.
Hilbert spaces are norm isomorphic if and only if they have the same Hilbert dimension \cite[Theorem~5.4]{Conway94}.

If there is an expansive operator $A\colon V \to W$ than the dimension of $W$ is not smaller than the dimension of $V$: if $E$ is a maximal orthonormal set in $V$ then $A(E)$ is still linearly independent due to linearity and injectivity of $A$. Since $W$ is a Hilbert space there is a decomposition $W \simeq \overline{A(V)} \oplus W' $. By the Gram-Schmidt process we can find a basis for $\overline{A(V)}$. Extending this basis to a basis of $W$ be see that the dimension of $W$ is not smaller than the dimension of $V$.
Thus if we have expansive operators in both direction, $V$ and $W$ are of the same dimension and hence there is a norm isomorphism.

\begin{exerciseSolved}
The left dual of $\Norm{}{op}$ is the re-scaled operator norm $\log \supBd[1]_{v\in V} \frac{\Norm{Av}W}{\Norm{v}V}$ since for any $v$ with $\norm{Av}_c \neq 0$ one can consider the re-scaled embedding of the one-dimensional subspace $f'\colon \R v \to V, v' \mapsto \frac{1}{\norm{Av}} v' $.
Repeating this argument shows that $\norm{}$ is left reflexive.
\end{exerciseSolved}
\begin{exercise}
\begin{xca}
    Show that the left dual of $\Norm{}{op}$ is the re-scaled operator norm $\log \supBd[1]_{v\in V} \frac{\norm{Av}}{\norm{v}}$ and that $\Norm{}{op}$ is left reflexive.    
\end{xca}
\end{exercise}

\section{Norms from capacities}
    \label{sec:capacities}
For concrete categories $\cat$ most seminorms arise from a function on the subobjects of objects in $\cat_0$---or an extension of this concept---valued in the extended real numbers, called precapacity.
We will define precapacities as a function on subobjects of objects in $ \cat$, the category to be given a seminorm.
Given an object $X$ in a category $\cat$ the slice category $ \cat/X $ is defined as the category with morphisms $ B \in \hom{Y}{X} $ (for any $Y \in \cat_0$) as objects and commuting diagrams
\[
    \begin{tikzcd}
            \source B  \ar[rr, "\varphi"]\ar[dr, "B"] &  & \source C \ar[dl, "C"] \\
                                        & X &
    \end{tikzcd}
\]
as morphisms. Composition is defined by composition of morphisms $\varphi\comp*\psi $:
\[
    \begin{tikzcd}
        \source B  \ar[r, "\varphi"]\ar[dr, "B"]  & \source C \ar[d, "C"] \ar[r, "\psi"] & \source D \ar[dl, "D"] \\
                                    & X &
    \end{tikzcd}
\mathrlap{\text.}
\]

We repeat the standard notion of subobjects from category theory \cites[11]{MacLane94}[A.1.3]{Johnstone02}.
A subobject of an object $X$ in a category $\cat$ is an equivalence class of monomorphisms to $ X $, where equivalent means isomorphic in $\cat/X$. 
We denote the set of such equivalence classes by $\subObjs_0(X)$.
Note that the composition of two monomorphisms is a monomorphism.
\begin{exercise}
\begin{subenvironments}
\begin{xca}
    Prove that two objects $B, C \in (\cat/X)_0$ are isomorphic if $B,C$ are monomorphisms in $\cat$ and there are morphisms $\varphi\in\hom{B}{C}[\cat/X]$ and $\psi\in\hom{C}{B}[\cat/X]$ such that $\varphi$ and $\psi$ are monomorphisms in $\cat$.
\end{xca}
\begin{xca}
    Prove that $\subObjs_0(X)$ becomes a partially ordered set by the relation $\subseteq$ where $B \subseteq C$ if and only if there is a morphism from $B$ to $C$ in $\cat/X$ that is a monomorphism in $\cat$.
\end{xca}
\end{subenvironments}
\end{exercise}

\begin{exerciseSolved}
Two objects $B, C \in (\cat/X)_0$ are isomorphic if there are morphisms $\varphi\in\hom{B}{C}[\cat/X]$ and $\psi\in\hom{C}{B}[\cat/X]$ such that $\varphi$ and $\psi$ are monomorphisms in $\cat$: 
assume that there are morphisms $\varphi\in\hom{B}{C}[\cat/X]$ and $\psi\in\hom{C}{B}[\cat/X]$ such that $\varphi$ and $\psi$ are monomorphisms in $\cat$.
Then, in $\cat$, we have $ C = {\id} \comp* C $ and $ C = \varphi \comp* \psi \comp* C $. 
Hence $ {\id} = \varphi \comp* \psi $. By the parallel argument we get $ {\id} = \psi \comp* \varphi $. This shows that $\varphi, \psi$ are isomorphisms in $\cat$.
Further the property of admitting a monomorphism from $B$ to $C$ gives a partial order on $\subObjs_0(X)$, written $\subseteq$:
\begin{enumerate}
    \item antisymmetry was just proven above,
    \item transitivity means that the composition of monomorphisms is a monomorphism, and
    \item reflexivity means that the identity is a monomorphism.
\end{enumerate}
\end{exerciseSolved}

We will denote by $\subObjs(X)$ the set $\subObjs_0 (X)$ endowed with the partial order $\subseteq$.
From category theory, the pullback\footnote{We denote pullback diagrams by \begin{tikzcd}[ampersand replacement=\&]
    {}\ar[d] \ar[r] \arrow[dr, phantom, "\usebox\pullback" , near start]
 \&{} \ar[d] \\
    {} \ar[r]   \& {}
\end{tikzcd}. } of a monomorphism $C$ along a morphism $f$ is a monomorphism again; it is denoted by $f^* C$:
\[
\begin{tikzcd}
    \source (f^* C) \ar[d, "f^* C"] \ar[r] \arrow[dr, phantom, "\usebox\pullback" , very near start]
 &\source C \ar[d, "C"] \\
    X \ar[r, "f"]   & Y
\end{tikzcd}
\mathrlap{\qquad\text{.} }
\]

\begin{definition}
    A \definiendum{concrete category} $(\cat, F)$ is a category $\cat$ together a faithful functor $\cat \to \Cat{Set}$. The functor $F$ is called \definiendum{forgetful functor}.
\end{definition}

\begin{definition}\label{def:concreteCat_subObj}
    By a \definiendum{concrete category with generalized subobjects $(\cat, F;\linebreak[1] S, F_S, \operatorname{GS} ) $}, which by abuse of notation we will often denote by $(\cat; \operatorname{GS} )$, we understand a concrete category $(\cat, F)$ additionally endowed with
    an extension of the concrete category $ (\cat, F) $, meaning a commutative triangle of functors
            \begin{center}
                \begin{tikzcd}
                        \cat[SC] \ar[dr, "F_S"]        &        \\
                        \cat\ar[u, "S"]\ar[r, "F"]  &   \Cat{Set}
                \end{tikzcd}
            \text,
            \end{center}
    and a selection function $\operatorname{GS} $ that assigns to each object $X \in \cat_0$ a family of subobjects in $\subObjs(S X)$, called \definiendum{(generalized) subobjects},
    such that
    \begin{enumerate}
        \item\label{lst:def:def:concreteCat_subObj_preserve}
            for each $X \in \cat_0$ the order preserving induced functor
            \[
              |\operatorname{GS}|(X)\colon \operatorname{GS}(X) \to \subObjs(F (X)),\quad
              C \mapsto (F_S)_1(C)
            \]
            is well-defined, i.e.\ $ F_S(C) $ is a monomorphism again, and full, i.e.\ $\left|B\right| \subseteq \left|C\right|$ implies $B \subseteq C$ where
            \begin{equation}\label{eq:extension_subobj}
                |C| \coloneqq (F_S C) (\source C) \subseteq F(X)
            \end{equation}
    for any $C \in \operatorname{GS} (X)$ with $X \in \cat_0$ (note that \cref{eq:extension_subobj} is independent of the representative in $\Cat{Set}_1$).
        \item\label{lst:def:def:concreteCat_subObj_preimage}
            if $f\colon X \to Y$ and $C \in \operatorname{GS}(Y)$, then there is a $B \in \operatorname{GS}(X)$ with $ |B| = (F f)^* (|C|) $, that is maximal in $\operatorname{GS}(Y)$ with this property.
            This generalized subobject is called the \definiendum{preimage of $C$ under $f$} and written $ B = f^*(C) $.
    \end{enumerate}
\end{definition}

Often we will encounter the case $\cat[SC] = \cat$ and $S = \id_{\cat}$.
By abuse of notation we will often write $B \in \operatorname{GS}(X)$ for a representative of $B$.
Note that a concrete category with generalized subobjects has some similarity to a Grothendieck topology: think of a representative of an element of $\operatorname{GS}(X)$ as singleton set. Then one may compare the assignment $ \operatorname{GS}$ to a Grothendieck topology, that assigns to each object a family of sieves.

An example of a concrete category with subobjects is given by 
\begin{equation}\label{eq:enoughSubObj_closedSubobjects}
    (\Cat{Top}, F; {\id_{\Cat{Top}}}, F, {\closed} )
\end{equation}
where 
$\closed(\situs)$ is the collection of equivalence classes of homeomorphisms onto closed subspaces of $\situs$ and $F$ is the canonical forgetful functor $\Cat{Top} \to \Cat{Set}$:
the property of having enough subobjects follows immediately from the fact that preimages of closed sets under continuous maps are closed.

\begin{definition}
    A \definiendum{precapacity} $w$ on a concrete category with subobjects $ (\cat; \operatorname{GS} ) $ is a function
    \[
            c\colon \bigsqcup_{X \in \cat[SC]_0} \operatorname{GS}(X)
        \to 
            [-\infty, \infty]
    \]
    and it is called a \definiendum{capacity} if it is monotone or antimonotone, i.e.\ for any two subobjects $A, A' \in \operatorname{GS} (X) $ with $ A \subseteq A' $ we have that $c(A) \leq c(A') $ or $c(A') \leq c(A)$, resp.
\end{definition}

In practise, capacities are often non-negative.
Each precapacity $c$ gives rise to an assignment
\begin{equation}
\label{eq:seminorm_by_w}
        \norm{f}_c
    \coloneqq
        \supBd
            \setBuilder*
                {c  (f^* B)  - c(B)}
                {\substack{B \in \operatorname{GS} (Y), \\ c(B) < \infty}}
\end{equation}
where $-\infty - (-\infty) \coloneqq 0$.
For a concrete category with enough subobjects it is called the \definiendum{seminorm induced by} $w$. The seminorm properties are checked immediately by
\[
        \norm{\id_X}_c 
    =
        \supBd_B c ( \id_X^* (B)) - c(B)
    =
        \supBd_B c(B) - c(B)
    =
        0
\]
and for any diagram $X \xrightarrow{f} Y \xrightarrow{g} Z$ in $\cat$
\begin{align*}
        \norm{f\comp*g}_c
    &=
        \supBd_B
            \left(c (f\comp*g)^* B  - c(B)\right)
\\
    &=
        \supBd_B
            \left(c (f\comp*g)^* B  - c (g^* B) + c (g^* B) - c(B)\right)
\\
    &\leq
        \supBd_B \left(c (f\comp*g)^* B  - c g^* B \right) + 
        \supBd_B    \left(c g^* B - c(B)\right)
\\
    &=
        \norm{f}_c + \norm{g}_c
\text{.}
\end{align*}

\begin{example}
As an example we will explain how to formulate the seminormed category $(\Cat{NVect}_{\R}, \Norm{}{op})$ from \cref{ssec:NVect} in the framework of capacities explained above.
Set
\begin{align*}
        (\Cat{SNVect}_{\R})_0
    &\coloneqq
        \setBuilder{(F,V)}{ F\subseteq V, V\in \Cat{NVect}_{\R} }
\\
        \hom{(F,V)}{(G,W)}[\Cat{SNVect}_{\R}]
    &\coloneqq
        \setBuilder{f\in \hom[Set]{F}{G} }{ \exists A \in (\Cat{NVect}_{\R})_1\colon A|_F = f }
\\
        S
    &\colon 
        \Cat{NVect}_{\R}\to\Cat{SNVect}_{\R}
\\
        S(V)
    &\coloneqq
        (V,V)
\\
        F_S(F, V)
    &\coloneqq
        F
\\
        \operatorname{GS} (V)
    &\coloneqq
        \setBuilder[\big]{(F,V) \xrightarrow{ \smash{(v,w)\mapsto (v,w) } } (V, V) }{F \subseteq V}
\\
        c( F, V )
    &\coloneqq
        \sup_{v\in F}\log\norm{v}_V
\end{align*}
where $V = (V, \norm{v}_V)$. This actually defines a category with subobject as both properties \cref{lst:def:def:concreteCat_subObj_preserve,lst:def:def:concreteCat_subObj_preimage} are obvious. 
\begin{exerciseSolved}
Moreover for any $A\colon V \to W$
\begin{align*}
        \norm{A}_c 
    &=
        \supBd_{\substack{G \subseteq W, \\ \norm{G}_c< \infty}} 
            c (f^*G,V)  - \log\norm{G}_c
\\
    &=
        \supBd_{\substack{G \subseteq W, \\ \norm{G}_c< \infty}} 
            \Big(\sup_{v\in f^*G}\log\norm{v}_V\Big)  - \Big(\sup_{w\in G}\log\norm{v}_W\Big) 
\end{align*}
Thus we have $
    \norm{A}_c \geq \supBd_{w \in W} (\sup_{v\in V, f(v) = w} \norm{v}_V) - \norm{w}_W
$ and also $
    \norm{A}_c \leq \supBd_{\substack{G \subseteq W, \norm{G}_c< \infty}, v \in f^* G} 
            (\log\norm{v}_V)  - \log\norm{f(v)}_W 
$. Hence $\norm{A}_c = \Norm{A}{op}$.
\end{exerciseSolved}
\end{example}

\begin{exercise}
\begin{xca}
     Show that $\norm{A}_c = \Norm{A}{op}$.
\end{xca}
\end{exercise}

\subsection{Capacities by images}

A natural question is, when one can compute capacities given an "image" function $
    g\colon \operatorname{GS} (\obj) \to \operatorname{GS} (\obj*) 
$.
The leading example in this regard are the subobjects given by the power set and the corresponding adjunctions $f_*, f_!\colon \powerSet(X) \to \powerSet(Y)$ characterized by the adjunctions $
  f_* \dashv f^* \dashv f^!
$ (\cref{ssec:setTh}).
As the collections of subobjects are posets, these adjunctions are actually Galois connections.

\begin{lemma}\label{lem:capacities_Galois}
    Let $X$ and $Y$ be two objects in a category with generalized subobjects. Assume either that 
    \begin{enumerate}
        \item $c$ is monotone and there is a direct image $ f_*\colon \operatorname{GS} (\obj) \to \operatorname{GS} (\obj*) $, i.e. a Galois connection $f_* \dashv f^*$, or
        \item $c$ is antimonotone and there is a small image $ f_!\colon \operatorname{GS} (\obj) \to \operatorname{GS} (\obj*) $, i.e. $ f^* \dashv f^! $.
    \end{enumerate}
    Then
    \begin{align*}
            \norm{}_c 
        &= 
            \lambda f. \supBd \setBuilder*
                {c(A) - c(f_* A)}
                {A \in \operatorname{GS} (\obj), c(f_* A) < \infty}, 
        \text{ or }
    \\
            \norm{}_c 
        &= 
            \lambda f. \supBd \setBuilder*
                {c(A) - c(f_! A)}
                {A \in \operatorname{GS} (\obj), c(f_!A) < \infty},
    \text{ resp.}
    \end{align*}
\end{lemma}

\begin{proof}
    Both claims are implied by the following estimate, where $g$ equals $f_*$ or $f_!$:
    \begin{align*}
        \norm{f}_c
        &=      \supBd \setBuilder* {c(f^* B) - c(B)}{B \in \operatorname{GS} (\obj*), c(B) < \infty}
    \\  &\geq   \supBd \setBuilder* {c(f^* B) - c(B)}{B = g(A), A \in \operatorname{GS} (\obj), c(B) < \infty}
    \\  &=      \supBd \setBuilder* {c(f^* g A) - c(g A)}{A \in \operatorname{GS} (\obj), c(g A) < \infty}
    \intertext{we have $c(f^* g A) \geq c(A)$ due to---in the first case---$ f^* f_* A \supset A $ and monotonicity of $c$ and---in the second case---$ f^* f_! A \subset A $ and anti-monotonicity }
        &\geq   \supBd \setBuilder* {c( A) - c(g A)}{A \in \operatorname{GS} (\obj), c(g A) < \infty}
    \\  &\geq   \supBd \setBuilder* {c( A) - c(g A)}{A = f^*(B), B \in \operatorname{GS} (\obj*), c(g A) < \infty}
    \\  &=      \supBd \setBuilder* {c( f^* B) - c(g f^* B)}{B \in \operatorname{GS} (\obj*), c(g f^* B) < \infty}
    \intertext{we have $c(g f^* B) \leq c(B)$ due to---in the first case---$ f_* f^*  B \subset B $ and monotonicity of $c$ and---in the second case---$ f_! f^*  B \supset B $ and anti-monotonicity }
        &\geq   \supBd \setBuilder* {c( f^* B) - c(g f^* B)}{B \in \operatorname{GS} (\obj*), c(B) < \infty}
    \intertext{and due to the same fact also}
        &\geq   \supBd \setBuilder* {c( f^* B) - c(B)}{B \in \operatorname{GS} (\obj*), c(B) < \infty}
    \\  &=      \norm{f}_c
    \qedhere
    \end{align*}
\end{proof}

\subsection{Dual seminorms from capacities}

Let $c$ be a precapacity.
In view of \cref{eq:seminorm_by_w} one is of course tempted to look at the quantity
\begin{equation}
\label{eq:precapacity_OpSNorm}
        \norm{f}_{-c}
    \coloneqq
        \supBd
            \setBuilder*
                {c(B) - c  f^* B}
                {\substack{B \in \operatorname{GS} (Y), \\ c(B) < \infty}}
.
\end{equation}

\begin{subenvironments}
\begin{exercise}
\begin{xca}\label{eq:Rdual_le_precapacity}
    Show that for any precapacity $c$ it holds that $ \norm{f}_c^{*\mathrm{R}} \leq \norm{f}_{-c} $ 
    and therefore $
        \norm{f}_{-c}^{*\mathrm{L}} 
    \geq \norm{f}_c
    \geq \norm{f}_{-c}^{*\mathrm{R}}
$.
\end{xca}
\begin{xca}\label{eq:rightBidual_le_norm}
    Moreover for the \definiendum{left} and \definiendum{right biduals}, i.e.\ the  quantities $\norm{}^{*\mathrm{L}*\mathrm{L}}$ and $\norm{}^{*\mathrm{R}*\mathrm{R}}$, show that $
        \norm{}^{*\mathrm{L}*\mathrm{L}}, \norm{}^{*\mathrm{R}*\mathrm{R}} \leq \norm{}
    $.
\end{xca}
\end{exercise}
\begin{exerciseSolved}
    \begin{subequations}
    It turns out to be related by the inequalities
    \begin{equation}\label{eq:Rdual_le_precapacity}
        \norm{f}_c^{*\mathrm{R}} \leq \norm{f}_{-c}
    \end{equation}
    as is easily checked:
    \begin{align*}
            \norm{f}_c^{*\mathrm{R}}
        &=  \supBd_{f'} \norm{f'}_c - \norm{f\comp* f'}_c
    \\
        &=  \supBd_{f'} \supBd_B c  f^{\prime *} B - c B - \supBd_B c  (f\comp* f')^{*} B - c B
    \\ 
        &\leq \supBd_{f', B} c  f^{\prime *} B - c B - \left( c  (f\comp* f')^{*} B - c B \right)
    \\
        &\leq \supBd_B c   B - c  f^{*} B
    \\
        &=  \norm{f}_{-c}
    \text.
    \end{align*}
    \end{subequations}
    
    For the \definiendum{left} and \definiendum{right biduals} we have that 
    \begin{equation}\label{eq:rightBidual_le_norm}
        \norm{}^{*\mathrm{L}*\mathrm{L}}, \norm{}^{*\mathrm{R}*\mathrm{R}} \leq \norm{}
    \text;
    \end{equation}
    we check this in the case of the left bidual by estimating for any morphism $f$
    \begin{align*}
            \norm{f}^{*\mathrm{L}*\mathrm{L}}
        &=   \supBd_{f'} \norm{f'}^{*\mathrm{L}} - \norm{f'\comp* f}^{*\mathrm{L}}
    \\  &=   \supBd_{f'} 
                    \supBd_{f''} \left( \norm{f''} - \norm{f''\comp* f'} \right) 
                  - \supBd_{f''} \left( \norm{f''} - \norm{f''\comp* f' \comp* f} \right) 
    \\  &\leq\supBd_{f', f''} \left( \norm{f''} - \norm{f''\comp* f'} \right) 
                    -   \left( \norm{f''} - \norm{f''\comp* f' \comp* f} \right) 
    \\  &=\supBd_{f', f''} \norm{f''\comp* f' \comp* f} - \norm{f''\comp* f'}
    \\  &\leq \norm{f}
    \end{align*}
    using triangle inequality in the last step; the case of the right bidual is checked by a parallel argument.
\end{exerciseSolved}
\end{subenvironments}

\subsection{Some crucial seminorms}
\label{ssec:crucialSeminorms}

In this preprint and future work we will study the following examples of capacities and develop simplified characterizations for the induced norms in the following cases.
Note the conventions $\log 0 = -\infty$ and $\abs{-\infty} = \infty$.
\begin{enumerate}
    \item 
        topological dimension $C \mapsto \abs{\log (1+\Dim C)}$ for $\operatorname{SO}(\situsSet) = \powerSet(\situsSet) $ in \cref{sec:top} giving rise to a seminorm measuring deviation of a map from being light.
    \item 
        logarithmic number of connected components
        \[
            \operatorname{SO}(\situsSet) = \closed(\situs)
        \quad\text{and}\quad
            \abs{\log \# {\operatorname{I}}} \colon \situs \mapsto \abs{\log(\# (\operatorname{I} \situs) ) }
        \]
        $ $ for $ $where $ \operatorname{I} \situs $ the set of connected components of $\situs$. In \cref{sec:top} this gives rise to a seminorm measuring deviation of a map from being monotone.
    \item 
        diameter $\diameter \spSp$ of a metric space $\spSp$ for $\operatorname{SO}(\spSp) = \powerSet(\spSp) $ giving rise to a norm, that quasi-metrizes Gromov-Hausdorff convergence.
    \item
        negative logarithmic diameter $-\log \diameter \spSp = - c_{\mathrm{Lip}} $ giving rise to the Lipschitz norm $\Norm{}{Lip} $, that quasi-metrizes Lipschitz convergence of metric spaces. For a map $ f\colon \spSp \to \spSp* $ it may be  expressed as
        \begin{align}\nonumber
                \Norm{f}{Lip}
            &\coloneqq 
                \norm{f}_{-c_{\mathrm{Lip} } }
        \\ \nonumber
            &= 
                \supBd \setBuilder
                    { \log (\diameter B) - \log(\diameter f^* B) }
                    {B \subseteq \spSet*, \log\diameter B > - \infty}
        \\ \label{eq:snorm_Lip_teaser}
            &= 
                \supBd \setBuilder*
                    { \log \frac{ \diameter B }{ \diameter f^* B } }
                    {B \subseteq \spSet*, \diameter B > 0}
        \text.
        \end{align}
        Note that $\diameter B > 0$ implies $ \diameter f^* B > 0 $ for metric spaces.
\begin{Wasserstein}
    \item
        The Wasserstein capacity, see \cref{ssec:snorm_Wasserstein}, for projective metric measure spaces is defined on equivalence classes of metric measure spaces under the equivalence relation $(\spSet, \lambda\spMet, \spVol ) \sim_{\mathrm{W}} (\spSet, \spMet, \lambda \spVol )$.
        By extending the category of such spaces to a category $\Cat{TPmMet}$ of such spaces endowed with a real valued test function we can define a capacity by
        \[
                \capacity[W]( [\spSp], \varphi )
            \coloneqq
                \supBd \setBuilder*
                    { \int \varphi d \spVol }
                    {\substack{(\spSet, \spMet, \spVol) \in \equivClass\spSp{W}, \\
                        \hoeld*{(\spSet, \spMet) \xrightarrow{\varphi} [0,1] }1 \geq 1}  }
        \text.
        \]
    \item
        the \definiendum{total Wasserstein capacity}
        \[
            c_{\mathrm{W}}^* = c_{\mathrm{W}} + \volume
        \text.
        \]
\end{Wasserstein}
\end{enumerate}

\section{Topological spaces}
\label{sec:top}
The subject of this section are two seminorms both measuring the increase of complexity when passing from a subset of the domain to its preimage.
In the first case complexity is measured by topological dimension in the second one by number of connected components.
These norms measure the deviation from being light and monotone respectively.
The well-known monotone-light factorization implies that in the case of compact spaces the sum of both norms, which we call the topological norm $\Norm{}{top}$, is a norm.
Actually, the monotone-light factorization is a strengthening of the norm property \cref{lst:def:norm1}.
The idea of the monotone-light factorization goes back to \textcite{Eilenberg34,Whyburn34} independently.
For a historical overview about the monotone-light factorization and its variations consult \cite{Lord97}.

\subsection{Dimension seminorm}

Recall that a map $f\colon \situs \to \situs*$ is \definiendum{light} if the fiber $f^*y \coloneqq f^*(\{y\})$ is totally disconnected for every $y \in \situsSet*$.
Further we define the \definiendum{dimension seminorm} using the precapacity $ A \mapsto \abs{\log(1+\Dim (A))} $ by
\begin{align*}
        \Norm{f}{dim} 
    &\coloneqq
        \norm{f}_{\abs{\log(1+\Dim)}}
\\
    &=
        \supBd_{\substack{A \in \powerSet (\situsSet*), \\ \Dim A < \infty}} 
             \abs{\log(1+\Dim f^* A)}  - \abs{\log(1+\Dim A)}
\end{align*}
where we used \cref{lst:def:def:concreteCat_subObj_preimage} of \cref{def:concreteCat_subObj} in the last step.

To further simplify this expression we use the Hurewicz formula which states
\begin{equation}\label{eq:dim_ineq}
    \Dim \situs \leq \Dim \situs* + \sup_{y \in Y} \abs{\Dim (f^*\{y\})}
\end{equation}
\cites[Ch.\ 9, Prop.\ 2.6]{Pears75}[VI.4]{HurewiczWallman48}
for any continuous closed surjection $f$ from a $\mathrm{T}_4$-space $\situs$ to a metrizable space $\situs*$.
A direct consequence of this formula is that the function $\lambda A. \abs{1 + \Dim A}$ is monotone with respect to inclusion. 
So this function is indeed a capacity.
\begin{subequations}
Hence \cref{lem:capacities_Galois} implies 
\begin{equation}
        \Norm{f}{dim} 
    =
       \supBd_{\substack{A \in \powerSet (\situsSet), \\ \Dim f_* A < \infty}} 
             \abs{\log(1+\Dim A)}  - \abs{\log(1 + f_* \Dim A)}
\text{.}
\end{equation}

Another consequence of \cref{eq:dim_ineq} is that
\begin{equation}\label{eq:lighness_snorm}
        \Norm{f}{dim}
    =
        \sup_{y \in Y} \abs*{\log (1 + \Dim (f^*\{y\}))}
\end{equation}
for a map $f$ from a $\mathrm{T}_4$-space $\situs$ to a metrizable space $\situs*$:
Indeed, for any singleton $\{y\}$ we have $ \abs{\log(1+\Dim \{y\})} = 0 $ and hence $
            \norm{f}_{\Dim} 
    \geq    \sup_{y \in Y} \abs*{\log (1 + \Dim f^*y)} 
$. But also for any $C \subseteq Y$ we can apply Hurewicz's formula \cref{eq:dim_ineq} to $ f|_{f^* C} f^* C \to C $ getting $             \Dim (f^*C) 
    \leq    \Dim (C) + \sup\limits_{y\in C} \Dim f^*y
    \leq    \Dim (C) + \sup\limits_{y\in Y} \Dim f^*y
$.
Hence by sub-linearity and monotonicity of $\log$ we have $ 
    \log (1 + \Dim (f^*C)) \leq \log (1 + \Dim C) + \sup\limits_{y\in Y} \log(1 + \Dim f^*y )
$.
Since $C$ was arbitrary this proves the lemma.

Actually, \cref{eq:lighness_snorm} allows one to rephrase Hurewicz's formula \cref{eq:dim_ineq} as
\begin{equation}
    \Dim \situs \leq \situs* + \exp \Norm{f}{dim} 
\end{equation}
since surjectivity of $f$ implies that $\Dim (f^* \{y\}) \leq 0$.
\end{subequations}

\subsection{Component seminorm}

Following \textcite{Whyburn50,CarboniJanelidzeKellyPare97} we call a map $f\colon \situs \to \situs*$ \definiendum{monotone} if the preimage of every singleton $\{y\}\subset \situs*$ is nonempty and connected. Note that this property implies surjectivity since the empty set consists of zero connected components.
For any topological space $\situs $ let $\operatorname{\mathcal{I}}(\situs) = (\operatorname{I}(\situs), \situsTop[\mathcal{I}(\situs)])$ denote the collection of connected components of $\situs$ endowed with the quotient topology.
Define the \definiendum{component seminorm} as
\begin{align*}
        \Norm{f}{comp}
    &\coloneqq
          \norm{f}_{|\log\#I|}
\\
    &=
        \sup\nolimits^0 
            \setBuilder*
                {\abs{\log(\# (\operatorname{I} f^*C))} - \log(\# (\operatorname{I} C))}
                {\substack{C \subseteq \situsSet* \text{ closed}, \\ \abs{\log \#(\operatorname{I} C)} < \infty }}
\\
    &=
        \sup\nolimits^0 
            \setBuilder*
                {\abs{\log(\# (\operatorname{I} f^*C))} - \log(\# (\operatorname{I} C))}
                {\substack{C \subseteq \situsSet* \text{ closed}, \\ 0 < \# (\operatorname{I} C) < \infty }}
\end{align*}
where we used the convention $\log(0) = - \infty$.
Note that we can express the number of connected components $\# I \situs$ of a nonempty space $\situs$ as $ 
        \exp \Norm{\situs \to \{*\}}{comp}
$ where $\situs \to \{*\}$ is the canonical map to the singleton space.
This norm relates to the dimension of fibers by the obvious inequality
\begin{equation}\label{eq:disconnNorm_fiber}
        \Norm{f}{comp}
    \geq
        \supBd_{p \in \situsSet* } 
            \abs{\log(\# (\operatorname{I} f^*\{p\})}
    \eqqcolon
        \operatorname{mon}(f)
\text.
\end{equation}

\begin{lemma}\label{lem:monotonNorm}
    Let $f\colon \situs \to \situs*$ be a \contFct.  Then
    \begin{claims}
        \item\label{claim:lem:monotonNorm_singleComponent}
            We have
            $
                    \Norm{f}{comp}
                =
                    \supBd
                        \setBuilder*
                            {\abs{\log (\# (\operatorname{I} (f^* C)))}}
                            {\substack{\emptyset \neq C \subseteq \situsSet* \text{ closed}, \\ \# \operatorname{I} C = 1}}
            \text.
            $
        \item\label{claim:lem:monotonNorm_mon}
            The map $f$ is monotone if and only if $\operatorname{mon}(f) = 0$.
        \item\label{claim:lem:monotonNorm_closedMonotone_impl_zeroNorm}
            If $f$ is closed and monotone then $\Norm{f}{comp} = 0$.
        \item\label{claim:lem:monotonNorm_compact}
            Assume that $\situs$ is compact and that $\situs*$ is Hausdorff.
            Then $f$ is monotone, if and only if $\Norm{f}{comp} = 0$.
    \end{claims}
\end{lemma}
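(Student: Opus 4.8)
The plan is to prove \cref{claim:lem:monotonNorm_singleComponent} first and then derive the other three claims from it, together with one classical fact about closed maps. For the ``$\geq$'' half of \cref{claim:lem:monotonNorm_singleComponent} there is nothing to do: a nonempty connected $C$ has $\log(\#\operatorname{I} C)=0$, so such $C$ contribute precisely the terms $\abs{\log(\#\operatorname{I}(f^*C))}$ to the supremum defining $\Norm{f}{comp}$. For ``$\leq$'' I would take an arbitrary $C\subseteq\situsSet*$ with $n\coloneqq\#\operatorname{I} C$ finite and positive, write it as the disjoint union of its components $C_1,\dots,C_n$, and note that a connected subset of $f^*C$ has connected image and hence lies inside a single $f^*C_i$; this forces the components of $f^*C$ to split along the $C_i$, so $\#\operatorname{I}(f^*C)=\sum_i\#\operatorname{I}(f^*C_i)$. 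If the right-hand supremum $R$ is $\infty$ we are done; otherwise every $\#\operatorname{I}(f^*C_i)$ lies in $[1,e^{R}]$ (in particular no $f^*C_i$ is empty), so $n\le\#\operatorname{I}(f^*C)\le n e^{R}$ and $\abs{\log\#\operatorname{I}(f^*C)}-\log n=\log\#\operatorname{I}(f^*C)-\log n\le R$.

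\Cref{claim:lem:monotonNorm_mon} is just unwinding the definition: $\operatorname{mon}(f)=0$ says $\abs{\log\#\operatorname{I}(f^*\{p\})}=0$, i.e.\ $\#\operatorname{I}(f^*\{p\})=1$, for every $p\in\situsSet*$, which is exactly the assertion that each fibre is nonempty and connected.

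For \cref{claim:lem:monotonNorm_closedMonotone_impl_zeroNorm} I would first record the elementary fact that a closed $f$ restricts to a closed map $f^*C\to C$ for every $C\subseteq\situsSet*$, because $f(\tilde F\cap f^*C)=f(\tilde F)\cap C$ for closed $\tilde F\subseteq\situs$. Then $f|\colon f^*C\to C$ is a closed surjection with connected fibres over $C$, and I would invoke the classical fibre-partition argument: given a clopen splitting $f^*C=A\sqcup B$, each fibre lies wholly in $A$ or in $B$, so $C=f(A)\sqcup f(B)$ is a disjoint union of two sets that are closed since $f|$ is closed, whence one of $A,B$ is empty when $C$ is connected. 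Letting $C$ range over the nonempty connected subsets of $\situsSet*$ shows $f^*C$ is connected and (by surjectivity of the monotone $f$) nonempty, so every term in the supremum of \cref{claim:lem:monotonNorm_singleComponent} equals $\abs{\log 1}=0$, i.e.\ $\Norm{f}{comp}=0$.

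Finally, in \cref{claim:lem:monotonNorm_compact} the implication from $\Norm{f}{comp}=0$ to $f$ being monotone needs no hypothesis on the spaces, since $\Norm{f}{comp}\ge\operatorname{mon}(f)$ by \cref{eq:disconnNorm_fiber} and \cref{claim:lem:monotonNorm_mon} then applies; for the converse, a continuous map from a compact space to a Hausdorff space is closed, so $f$ is closed and monotone and \cref{claim:lem:monotonNorm_closedMonotone_impl_zeroNorm} finishes. I expect the only genuinely delicate point to be in \cref{claim:lem:monotonNorm_closedMonotone_impl_zeroNorm}: closedness of $f$ must not be applied to a set that is merely closed in $f^*C$, which is precisely why passing through the closed restriction $f|\colon f^*C\to C$ is essential; the component-refinement in \cref{claim:lem:monotonNorm_singleComponent} (components need not be open) is the other place to tread carefully.
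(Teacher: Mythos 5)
Your proof is correct and follows essentially the same route as the paper's: reduce claim (a) to connected $C$ by splitting $f^*C$ along the components of $C$, unwind the definition for (b), run the fibre-partition argument for a closed monotone map in (c), and combine (c) with the fact that continuous maps from compact to Hausdorff spaces are closed for (d). Your justification that a closed map restricts to a closed map $f^*C \to C$, via the identity $f(\tilde F\cap f^*C)=f(\tilde F)\cap C$, is in fact a cleaner rendering of the corresponding step in the paper.
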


\begingroup
\allowdisplaybreaks
\begin{exercise}
\begin{xca}
    Prove \cref{claim:lem:monotonNorm_singleComponent,claim:lem:monotonNorm_mon} of \cref{lem:monotonNorm}.
\end{xca}
\end{exercise}

\begin{proof}
\begin{exerciseSolved}
    For \cref{claim:lem:monotonNorm_singleComponent} observe
    \begin{align*}
            \Norm{f}{comp}
        &=
            \supBd_{\substack{C \subseteq \situsSet*, \\ 0 < \# (\operatorname{I} C) < \infty }} 
            \abs{\log(\# (\operatorname{I} f^*C))} - \log(\# (\operatorname{I} C))
    \\
        &\leq
            \supBd_{\substack{C \subseteq \situsSet*, \\ 0 < \# (\operatorname{I} C) < \infty }} 
            \abs*{\log\left(\sum_{ C' \in (\operatorname{I} C) }\# (\operatorname{I} f^*C')\right)} 
            - \log(\# (\operatorname{I} C))
    \\
        &=
            \supBd_{\substack{C \subseteq \situsSet*, \\ 0 < \# (\operatorname{I} C) < \infty }} 
            \begin{cases*}
                \infty                  
                &if $\exists C' \in \operatorname{I} C \colon \# (\operatorname{I} f^*C') = \infty $
            \\
                \log\frac{\sum_{ C' \in \operatorname{I} C }\# (\operatorname{I} f^*C')}{\# (\operatorname{I} C)}
                &else
            \end{cases*}
    \\
        &\leq
            \supBd_{\substack{C \subseteq \situsSet*, \\ 0 < \# (\operatorname{I} C) < \infty }} 
            \begin{cases*}
                \infty                  
                &if $\exists C' \in \operatorname{I} C \colon \# (\operatorname{I} f^*C') = \infty $
            \\
                \log\sup\limits_{ C' \in {\operatorname{I} C} } \# (\operatorname{I} f^*C')
                &else
            \end{cases*}
    \\
        &=
            \supBd_{\substack{ \emptyset \neq C \subseteq \situsSet*, \\ \# \operatorname{I} C = 1}} 
            \abs{\log (\# (\operatorname{I} C))}
    \\
        &=
            \supBd
                \setBuilder*
                    {\abs{\log (\# (\operatorname{I} C))} - \log(\# (\operatorname{I} C))}
                    {\substack{ \emptyset \neq C \subseteq \situsSet* \text{ closed}, \\ \# \operatorname{I} C = 1}}
    \\
        &\leq
            \supBd
                \setBuilder*
                    {\abs{\log(\# (\operatorname{I} f^*C))} - \log(\# (\operatorname{I} C))}
                    {\substack{ \emptyset \neq C \subseteq \situsSet* \text{ closed}, \\ 0 < \# (\operatorname{I} C) < \infty }}
    \\ \displaybreak[0]
        &=
            \Norm{f}{comp}
    \text.
    \end{align*}
    \Cref{claim:lem:monotonNorm_singleComponent} clearly follows.
    
    \Cref{claim:lem:monotonNorm_mon} follows from the fact that $\abs{\log(\# (\operatorname{I} f^* \{p\}))} = 0$ if and only if $ f^* \{p\} $ is a singleton.
\end{exerciseSolved}

\begin{exercise}
    \Cref{claim:lem:monotonNorm_singleComponent,claim:lem:monotonNorm_mon} are already proved by the exercise.
\end{exercise}
    For \cref{claim:lem:monotonNorm_closedMonotone_impl_zeroNorm} assume that $f$ is monotone and closed.
    Note that the restriction $f|_{f^*C}\colon f^*C \to C$ is closed as well: 
    take any relatively closed $ A \subseteq f^* C $. Any point $ x $ in the closure of $f_*(A)$ relative to $C$ must be in the image $f_*(\overline{A}^{\situs})$ of the closure of $C$ in $\situsSet$, but then already $y \in A$ for any $y\in f^* x$. 
    Thus $f_*(A) = f_*(\overline{A}^{\situs})$.
    
    Let $C$ be an arbitrary closed connected subset of $\situsSet*$. 
    The preimage $f^*C$ must not be empty because otherwise monotonicity of $f$ would be violated.
    Since $C$ is closed, so is $f^*C$.  Assume that $f^*C$ is a disjoint union of two sets $K$ and $L$ that are clopen in the relative topology on $f^*C$.
    For any $y \in C$ the preimage $f^*\{y\}$ is connected in the relative topology.
    Hence either $f^*\{y\} \subseteq K$ or  $f^*\{y\} \subseteq L$.
    Since this holds for all $y \in C$, the set $C$ is actually the disjoint union of $f_*K$ and $f_*L$.
    Due to our observation on the closedness of $f|_{f^*C}\colon f^*C \to C$ both $f_*K$ and $f_*L$ are open in the relative topology.
    Thus by connectedness of $C$ either $f_*K = \emptyset$ or $f_*L = \emptyset$, a contradiction.
    Consequently, $f^*C$ is connected.
    Hence $\Norm{f}{comp} = 0$.

    In \cref{claim:lem:monotonNorm_compact} the direction $ \text{monotone} ``\ldots \implies \Norm{f}{comp} = 0 $'' follows from \cref{claim:lem:monotonNorm_closedMonotone_impl_zeroNorm} and the fact that a \contFct{} from a compact space to a Hausdorff space is closed. The other direction is implied by \cref{eq:disconnNorm_fiber,claim:lem:monotonNorm_mon}.
\end{proof}
\endgroup


\begin{exerciseSolved}

\begin{remark}
    In \cref{claim:lem:monotonNorm_compact} the assumption that $\situsTop*$ is Hausdorff
    is necessary for the the statement that monotonicity of $f$ implies $\Norm{f}{comp} = 0$:
    For a counterexample let $\situs$ be the discrete space on a two element set and $\situs*$ the Sierpi\'nski space, i.e.\ $\situsSet* = \{0,1\}$ and $\situsTop* = \{ \emptyset, \{1\}, \situsSet* \}$. 
    Being finite $\situs$ is compact.
    Let $ f\colon \situs \to \situs* $ be a bijection. Since the map $f$ is bijective, it is monotone. 
    On the other hand $\situs*$ is connected but its preimage consists of two connected components.
\end{remark}
\end{exerciseSolved}

\begin{exercise}
\begin{xca}
    Find an example of a monotone map $f$ from a compact space into a non-Hausdorff space such that $\Norm{f}{comp} > 0$.
\end{xca}
\end{exercise}

\begin{theorem}
    A map $f\colon \situs\to\situs*$ between totally disconnected compact Hausdorff spaces having $\Norm{f}{comp}=0$ is a homeomorphism.
\end{theorem}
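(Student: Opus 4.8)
The plan is to show that $\Norm{f}{comp}=0$ already forces $f$ to be a continuous bijection, and then to invoke the classical fact that a continuous bijection from a compact space onto a Hausdorff space is a homeomorphism.

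First I would extract bijectivity. By the inequality \cref{eq:disconnNorm_fiber} we have $\operatorname{mon}(f)\le\Norm{f}{comp}=0$, hence $\operatorname{mon}(f)=0$, and \cref{claim:lem:monotonNorm_mon} of \cref{lem:monotonNorm} then tells us that $f$ is monotone; that is, for every $y\in\situsSet*$ the fibre $f^*\{y\}$ is nonempty and connected. Since $\situs$ is totally disconnected, its only nonempty connected subspaces are the singletons, so every fibre $f^*\{y\}$ is a one-point set. Nonemptiness of all fibres gives surjectivity of $f$, and singleton fibres give injectivity, so $f$ is a bijection.

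Next I would upgrade this to a homeomorphism. The map $f$ is continuous, being a morphism of $\Cat{Top}$; moreover $\situs$ is compact and $\situs*$ is Hausdorff, so $f$ sends closed (equivalently, compact) subsets of $\situs$ to closed subsets of $\situs*$. Thus the bijection $f$ is a closed map, hence its inverse is continuous and $f$ is a homeomorphism.

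There is essentially no obstacle here: the only step that deserves a word is the passage ``$f^*\{y\}$ nonempty and connected $\Rightarrow$ $f^*\{y\}$ a singleton'', which is just the definition of total disconnectedness of $\situs$. Note that total disconnectedness of the codomain $\situs*$ is not actually used in the argument; it is part of the hypotheses only to make the statement symmetric and to phrase it within the category of Stone (i.e.\ compact Hausdorff totally disconnected) spaces, where $f$ is then seen to be an isomorphism.
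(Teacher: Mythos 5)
Your proof is correct and follows essentially the same route as the paper's: extract surjectivity and singleton fibres from $\Norm{f}{comp}=0$ together with total disconnectedness of the domain, then conclude via the standard fact that a continuous bijection from a compact space to a Hausdorff space is closed, hence a homeomorphism. Your added remarks---citing \cref{eq:disconnNorm_fiber} and \cref{claim:lem:monotonNorm_mon} of \cref{lem:monotonNorm} to make the monotonicity step explicit, and observing that total disconnectedness of the codomain is never used---are accurate and only make the argument more precise than the paper's terse version.
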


\begin{proof}
    Assume that $\Norm{f}{comp}=0$.
    Then this map is surjective.
    Since every fiber is totally disconnected, $\Norm{f}{comp}=0$ implies that each fiber is a singleton.
    Hence $f$ is bijective.
    As for compact Hausdorff spaces the notions of closed and compact subsets coincide, and compact subsets are mapped to compact subsets under \contFct{}, the inverse of $f$ is continuous.
    Thus $f$ is a homeomorphism.
\end{proof}

\subsection{The topological norm}

Define the \definiendum{topological norm} as
\[
    \Norm{f}{top} \coloneqq \Norm{f}{comp} + \Norm{f}{dim}
\text.
\]

\begin{proposition}\label{prop:normTop0_impl_mon_light}
    Let $\situs$ be a compact $\mathrm{T}_4$ space and $\situs*$ be metrizable.
    If $\Norm{f}{top}=0$ for a \contFct{} $\situs\to\situs*$, then $f$ is monotone and light.
\end{proposition}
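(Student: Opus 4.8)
The plan is to decompose the hypothesis along the definition $\Norm{f}{top} = \Norm{f}{comp} + \Norm{f}{dim}$. Both summands are defined through $\sup\nolimits^0$, hence take values in $[0,\infty]$, so $\Norm{f}{top} = 0$ already forces $\Norm{f}{comp} = 0$ \emph{and} $\Norm{f}{dim} = 0$; the two conclusions of the proposition can then be established separately.

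For monotonicity I would invoke \cref{claim:lem:monotonNorm_compact} of \cref{lem:monotonNorm} directly: $\situs$ is compact by hypothesis and $\situs*$, being metrizable, is Hausdorff, so the stated equivalence applies and $\Norm{f}{comp} = 0$ yields that $f$ is monotone (in particular surjective).

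For lightness I would test the supremum defining $\Norm{f}{dim}$ against the singletons $\{y\} \subseteq \situsSet*$. Each such $\{y\}$ is an admissible test set, since $\Dim\{y\} = 0 < \infty$ with $\abs{\log(1+\Dim\{y\})} = 0$, and \cref{lst:def:def:concreteCat_subObj_preimage} of \cref{def:concreteCat_subObj} identifies the pullback $f^*\{y\}$ with the usual fiber $f^{-1}(\{y\})$ carrying the subspace topology. Hence
\[
    0 = \Norm{f}{dim} \geq \abs{\log(1+\Dim f^*\{y\})}
\]
for every $y$ (one may also quote the closed formula \cref{eq:lighness_snorm}). Thus $\log(1+\Dim f^*\{y\}) = 0$, i.e.\ $\Dim f^*\{y\} = 0$, for every $y$. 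Each fiber $f^*\{y\}$ is a closed subspace of the compact $\mathrm{T}_4$ space $\situs$, hence is compact Hausdorff, and a compact Hausdorff space of covering dimension $0$ is totally disconnected. Therefore every fiber of $f$ is totally disconnected, i.e.\ $f$ is light; together with the previous step this shows $f$ is monotone and light.

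The bookkeeping---non-negativity of the two seminorms and the identification of $f^*\{y\}$ with the topological fiber---is routine. The substantive inputs are \cref{lem:monotonNorm} and the one-line singleton estimate above; note that the compactness and $\mathrm{T}_4$ hypotheses on $\situs$ are used only to make every fiber a compact Hausdorff space, and the metrizability of $\situs*$ only to make it Hausdorff. The single non-formal ingredient is the dimension-theoretic fact that a $0$-dimensional compact Hausdorff space is totally disconnected; I do not anticipate a real obstacle, but this is the step a careful reader should verify.
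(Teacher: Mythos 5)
Your proof is correct and follows the same decomposition as the paper: non-negativity of the two summands forces $\Norm{f}{comp}=\Norm{f}{dim}=0$, monotonicity comes from \cref{lem:monotonNorm} (the paper justifies the applicability of the fiber estimate by noting that points of $\situs*$ are closed, which is what your appeal to \cref{claim:lem:monotonNorm_compact} via Hausdorffness of $\situs*$ also delivers), and lightness comes from testing the dimension seminorm on singletons. In fact your lightness step is more carefully argued than the paper's, which at that point cites \cref{claim:lem:monotonNorm_compact} of \cref{lem:monotonNorm} --- a statement about $\Norm{}{comp}$ and monotonicity, evidently a mis-citation --- whereas you supply the intended substance, namely \cref{eq:lighness_snorm} plus the standard fact that a zero-dimensional compact Hausdorff space is totally disconnected.
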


\begin{proof}
    Assume that $\Norm{f}{top}=0$.
    Then $\Norm{f}{comp} = \Norm{f}{dim} = 0$.
    The fact $\Norm{f}{comp} = 0$ implies that $f$ is monotone because points in $\situs*$ are closed.
    The other fact $\Norm{f}{dim} = 0$ implies that $f$ is light by \cref{claim:lem:monotonNorm_compact} of \cref{lem:monotonNorm}.
\end{proof}

\begin{theorem}
    Let $\situs$ be a compact $\mathrm{T}_4$ space and $\situs*$ be metrizable.
    If there is map  with $\Norm{f}{top}=0$, then $f$ is a homeomorphism.
    Especially, the category of compact metrizable spaces is a normed category with respect to $\Norm{}{top}$.
\end{theorem}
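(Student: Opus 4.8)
The plan is to reduce the whole statement to \cref{prop:normTop0_impl_mon_light} together with one elementary topological fact and a short computation of the value set of $\Norm{}{top}$; no new geometric input beyond the monotone--light factorization already invoked there is needed. First I would settle the homeomorphism claim: given $f\colon\situs\to\situs*$ with $\Norm{f}{top}=0$, where $\situs$ is compact $\mathrm{T}_4$ and $\situs*$ metrizable, non-negativity of the two summands forces $\Norm{f}{comp}=\Norm{f}{dim}=0$, so \cref{prop:normTop0_impl_mon_light} applies directly and yields that $f$ is monotone and light. Monotonicity makes each fiber $f^*\{y\}$ non-empty and connected and makes $f$ surjective, while lightness makes each fiber totally disconnected; since a non-empty connected totally disconnected space is a single point, every fiber is a singleton, so $f$ is also injective. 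Then $f$ is a continuous bijection from a compact space onto a Hausdorff space ($\situs*$ is metrizable), hence closed, hence a homeomorphism.

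Next I would verify the axioms of a norm for the restriction of $\Norm{}{top}$ to the full subcategory of compact metrizable spaces. That $\Norm{}{top}$ is a seminorm is immediate: it is the sum of the two seminorms $\Norm{}{comp}$ and $\Norm{}{dim}$ induced by precapacities in \cref{sec:capacities}, and passing to a full subcategory preserves this. For \cref{lst:def:norm1}, given modulators $f\colon X\to Y$ and $g\colon Y\to X$ between compact metrizable spaces, the previous step shows $f$ is a homeomorphism; I would then observe that any homeomorphism $h$ has $\Norm{h}{comp}=\Norm{h}{dim}=0$, because $h^*$ restricts to an order isomorphism on closed subobjects preserving both the number of connected components and the topological dimension (unwind \cref{claim:lem:monotonNorm_singleComponent} and \cref{eq:lighness_snorm}). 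Applying this to $f^{-1}$ exhibits $f$ as a norm isomorphism, so $X$ and $Y$ are norm isomorphic (the hypothesis on $g$ is not even used).

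For \cref{lst:def:norm2} I would show that $\Norm{}{top}$ omits the interval $(0,\log 2)$. By \cref{claim:lem:monotonNorm_singleComponent} the value $\Norm{f}{comp}$ is a $\sup^0$ of numbers $\abs{\log\#(\operatorname{I}(f^*C))}$, each of which lies in $\{0\}\cup\{\log n:n\ge 2\}\cup\{\infty\}$ since $\#(\operatorname{I}(f^*C))$ is a cardinal, hence $\Norm{f}{comp}\in\{0\}\cup[\log 2,\infty]$; likewise, by \cref{eq:lighness_snorm}, $\Norm{f}{dim}$ is a $\sup^0$ of numbers $\abs{\log(1+\Dim(f^*\{y\}))}$ in the same set, so $\Norm{f}{dim}\in\{0\}\cup[\log 2,\infty]$ and therefore $\Norm{f}{top}\in\{0\}\cup[\log 2,\infty]$. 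Consequently, if for every $\varepsilon>0$ there is $f\colon X\to Y$ with $\Norm{f}{top}\le\varepsilon$, then any such $f$ for a choice $\varepsilon<\log 2$ is already a modulator, which is \cref{lst:def:norm2}.

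The main obstacle is not really a single hard step --- the substantive work lives in \cref{prop:normTop0_impl_mon_light} and the monotone--light factorization behind it --- but the point demanding the most care is \cref{lst:def:norm2}, where one must pin down the value sets of $\Norm{}{comp}$ and $\Norm{}{dim}$ accurately enough to see the gap below $\log 2$; checking that a homeomorphism has vanishing topological norm (needed for \cref{lst:def:norm1}) is similarly just a matter of unwinding the definitions through the simplified formulas \cref{eq:lighness_snorm} and \cref{claim:lem:monotonNorm_singleComponent}.
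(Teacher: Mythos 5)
Your proof is correct, but it reaches the homeomorphism claim by a genuinely different and more elementary route than the paper. The paper, after invoking \cref{prop:normTop0_impl_mon_light}, writes $f$ in two ways as a composite of a monotone map followed by a light map (namely $f = \id\comp* f$ and $f = f\comp*\id$) and appeals to the uniqueness of the classical monotone--light factorization to produce the homeomorphism. You instead observe directly that a monotone and light map has fibers that are non-empty, connected and totally disconnected, hence singletons, so $f$ is a continuous bijection from a compact space to a Hausdorff space and therefore a homeomorphism. This avoids the factorization theorem entirely (at the cost of no longer illustrating the slogan that monotone--light factorization strengthens \cref{lst:def:norm1}), and it is sound: a non-empty connected totally disconnected space is indeed a point, and the compact-to-Hausdorff argument closes the loop. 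You are also more thorough than the paper on the ``especially'' clause, which the paper leaves implicit: your check that a homeomorphism has vanishing topological norm (so that $f^{-1}$ is also a modulator, giving a genuine norm isomorphism for \cref{lst:def:norm1}) and your identification of the value gap $(0,\log 2)$ via \cref{claim:lem:monotonNorm_singleComponent} and \cref{eq:lighness_snorm} for \cref{lst:def:norm2} are both needed and both correct --- note that the gap argument really does require those simplified formulas, since the raw defining suprema contain differences like $\log 3-\log 2$ that would lie below $\log 2$.
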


\begin{proof}
    By \cref{prop:normTop0_impl_mon_light} the map $f$ is monotone and light.
    Since the identity on a topological space is monotone and light as well, we have two factorizations of $f$ in a monotone and a light map:
    \begin{equation*}
        \begin{tikzcd}
                                            & \situs\ar[dr, "f"]    &           \\
            \situs\ar[ur, "\id"]\ar[dr, "f"]&                       & \situs*   \\
                                            & \situs*\ar[ur, "\id"] &
            \ar[from=1-2, to=3-2, dashed, "\varphi "]
        \end{tikzcd}
    \end{equation*}
    By the classical uniqueness of the monotone-light factorization \cite[\nopp 2.8, 7.3]{CarboniJanelidzeKellyPare97} there is a homeomorphism $\varphi\colon \situs\to\situs*$.
\end{proof}

\section{Metric spaces}\label{sec:Met}
Let $\Cat{Met}$ denote the category of compact metric spaces with multi-valued functions among them as morphisms, 
    i.e.\ functions $f\colon \spSet \to \powerSet(\spSet*)\setminus\{\emptyset\}$, $ x \mapsto f[x] $.
We will always write $f[x]$ instead of $f(x)$ for multi-valued functions to avoid any confusion with normal functions.
Objects of $\Cat{Met}$ we denote by curly letters, e.g.\ $\spSp = (\spSet, \spMet)$, and the metric is abbreviated by $\dist{\blank}{\blank} = \spMet(\blank, \blank)$ if no confusion can arise.


Actually, our arguments in this section extend to densely defined multi-valued functions, i.e.\ functions $ f\colon  \spSet \to \powerSet(\spSet*)$ such that points with $f[x]\neq \emptyset$ are dense. This is done by transforming such a function to a morphism in $\Cat{Met}$ by the \definiendum{closure} 
\begin{equation}\label{eq:denslyDefFct_closure}
        \overline{f}[x]
    \coloneqq
        \setBuilder*{y}{ y = \lim_{n\to\infty} y_n \text{ for } y_n \in f[x_n] \text{ with } x_n \xrightarrow{n\to \infty} x }
    \text.
\end{equation}

Let's start with a naïve approach by considering subsets $A \subseteq \spSet$ as subobjects and considering the perhaps most natural quantity as a capacity, the \definiendum{diameter}
\[
    \diameter = \lambda A.\supBd_{x,y \in A} \dist{x}{y}.
\]
This induces the seminorm
\[
        \norm{f}_{\diameter}'
    =   
        \supBd_{A \subseteq \spSet*} \left[\diameter(f^* A) - \diameter(A)\right]
\]
for a morphism $f\colon \spMetDef \to \spMetDef*$.
Unfortunately, as we want to include multi-valued functions to our discussion, the adjunction $f_* \dashv f^*$ is no longer valid for the power sets.

But there is a solution to this problem: The idea is to choose the power set of a power set as set of subobjects. Moreover,---since a metric is a map on pairs of points---it's suitable to consider the Cartesian product $\spSet \times \spSet $ in lieu of $\spSet$.
This is done by making $\Cat{Met}$ into a concrete category with generalized subobject (cf. \cref{def:concreteCat_subObj}) as follows:
\begin{center}
    \begin{tikzcd}
        \Cat{SMet} \ar[dr, "F_S"]     &        \\
        \Cat{Met}\ar[u, "S"]\ar[r, "F"]  &   \Cat{Set}
    \end{tikzcd}
\text,
\end{center}
\begin{subequations}
\begin{align}
    F(\spSp)    &=  \powerSet(M\times M)
\\
    F(f)        &= \lambda P. \setBuilder{(y, y')}{y \in f[x], y'\in f[x'], (x,x') \in P},
\\
    \Cat{SMet}    &= \Cat{Set}
\\
    S           &=  F
\\
    \operatorname{GS}(\spSp) &= \powerSet(F(\spSp))
\end{align}
for $\spSp\in\Cat{Met}_0$ and $f\in\Cat{Met}_1$.
\end{subequations}
Note that by this definition subobjects are simply normal subsets (i.e.\ subobjects in set) of the power set. Hence the pullback of subobjects is just the preimage, i.e.
\[
    (Sf)^*(B) = \setBuilder{P\subseteq \powerSet( M\times M )}{(Ff)(P)\in B}
\text.
\]

On the subobjects of a $S(\spSp)$ for $\spSp=(\spSet, \spMet)\in \Cat{Met}_0$ we define the capacity
\begin{equation}\label{eq:snorm_diam_def}
        c_{\textnormal{diam}}
    =
        \lambda A. \adjustlimits{\supBd}_{P\in A}{\inf}_{p\in P} \spMet(p)
\end{equation}
which is actually a capacity, i.e. monotone, being defined by a supremum.
Any subset $A \subseteq \spSet$ naturally corresponds to $ \powerSet(A\times A) $. Observe
\begin{equation}
    \diameter(A) = c_{\textnormal{diam}}(\powerSet(A\times A))
\text.
\end{equation}

\begin{subequations}
By definition and the fact that for any compact space there is a maximum value  (i. e. the diameter of the space) attained for the distances of two point sets we have
\begin{equation}
        \Norm{f}{diam}
    =
        \supBd\setBuilder
            {c_{\textnormal{diam}}( (S f)^*(B) ) - c_{\textnormal{diam}} ( B )}
            { B \subseteq \powerSet(\spSet* \times \spSet*) }
\text.
\end{equation}
As with $\Cat{SMet} = \Cat{Set}$ we have the usual adjunction---i.e.\ Galois connection in this case---$f_*\dashv f^*$. Thus we can apply \cref{lem:capacities_Galois} and again finiteness of the diamter obtaining
\begin{align}
\label{eq:snorm_diam_sup}
        \Norm{f}{diam} 
    &= 
        \supBd\setBuilder
            {c_{\textnormal{diam}}(A) - c_{\textnormal{diam}} ( (S f)_*(A) ) }
            {A \subseteq \powerSet(\spSet\times\spSet)}
\\ \nonumber
    &=  \supBd\setBuilder
            {c_{\textnormal{diam}}(\{P\}) - c_{\textnormal{diam}} ( (S f)_*(\{P\}) ) }
            {\{P\} \subseteq \powerSet(\spSet\times\spSet) }
\intertext{where "$\geq$" is obvious and "$\leq$" holds since 
    $ c_{\textnormal{diam}}(A) = \sup_{P \in A} c_{\textnormal{diam}}(\{P\}) $. }
\nonumber
    &= \supBd\setBuilder*
            {\inf_{p\in P} \spMet(p) - \inf_{p\in (S f) ( P ) } \spMet(p) }
            {P \subseteq \powerSet(\spSet\times\spSet) }
\\ \nonumber
    &= \supBd\setBuilder*
            {\inf_{p\in P} \spMet(p) - \spMet(q) }
            {P \subseteq \powerSet(\spSet\times\spSet), q\in (S f) ( P ) }
\\ \nonumber
    &= \supBd\setBuilder*
            {\spMet( p) - \spMet(q)  }
            { p \in \powerSet(\spSet\times\spSet), q\in (S f) ( P ) }
\\
\label{eq:snorm_diam_ptWise_sup}
    &=  \supBd \setBuilder{\spMet(x, x') - \spMet*(y, y')}{x, x' \in M, y\in f[x], y'\in f[x']}.
\end{align}
\end{subequations}

For $r>0$ let $\spSp_r = (\{x, y\}, \Dist_r)$ be the two point metric space such that $\Dist_r(x,y) = r$.
\begin{exercise}
This definition may serve as a hint for the following exercises.
\begin{xca}\label{lem:dil_duals}
    Show that the seminorm $\Norm{}{diam}$ is left reflexive and has the left dual
    \begin{align}\label{eq:norm_dil_leftDual}
            \Norm{f}{diam}^{*\mathrm{L}}
        &=
            \supBd_{x,y \in \spSet } \left(\dist{f(x)}{f(y)} - \dist{x}{y}\right)
    \text.
    \end{align}
\end{xca}
\end{exercise}
\begin{exerciseSolved}
Next we calculate
\begin{lemma}\label{lem:dil_duals}
    The seminorm $\Norm{}{diam}$ has the left dual
    \begin{equation}\label{eq:norm_dil_leftDual}
            \Norm{f}{diam}^{*\mathrm{L}}
        =   
            \supBd \setBuilder*{ \dist{y}{y'} - \dist{x}{x'} }{x,x' \in \spSet, y\in f[x], y' \in f[x'] }
    \end{equation}
    and is left reflexive.
\end{lemma}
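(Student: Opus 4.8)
The plan is to establish the identity \cref{eq:norm_dil_leftDual} by two opposite inequalities and then to obtain left reflexivity by a symmetric repetition of the same argument. Write $f$ for the given morphism, with domain $\spSp=(\spSet,\spMet)$, and abbreviate the \emph{expansion term} $E(g)\coloneqq\supBd_{x,y}\big(\dist{g(x)}{g(y)}-\dist{x}{y}\big)$ for a morphism $g$. Recalling from \cref{lem:snorm_dil_ptWise} that $\Norm{g}{dil}=\supBd_{x,y}\big(\dist{x}{y}-\dist{g(x)}{g(y)}\big)$, one sees that $\Norm{}{dil}$ and $E$ are the two halves of one and the same pointwise estimate, so every step below will be symmetric under interchanging them and flipping the sign inside the supremum.

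For the inequality $\Norm{f}{dil}^{*\mathrm{L}}\le E(f)$ I would fix an arbitrary $f'\colon\mathcal Z\to\spSp$ and note that, for points $z,w$ of $\mathcal Z$, the defining inequality $\dist{f(f'(z))}{f(f'(w))}\le\dist{f'(z)}{f'(w)}+E(f)$ gives $\dist{z}{w}-\dist{(f'\comp* f)(z)}{(f'\comp* f)(w)}\ge\big(\dist{z}{w}-\dist{f'(z)}{f'(w)}\big)-E(f)$. Picking, for each $\varepsilon>0$, points $z,w$ with $\dist{z}{w}-\dist{f'(z)}{f'(w)}>\Norm{f'}{dil}-\varepsilon$ yields $\Norm{f'\comp* f}{dil}\ge\Norm{f'}{dil}-E(f)-\varepsilon$, hence $\Norm{f'}{dil}-\Norm{f'\comp* f}{dil}\le E(f)$; a supremum over $f'$ finishes this direction. (When $\Norm{f'}{dil}=\infty$ one argues verbatim with ``$>N$'' for every $N$ in place of ``$>\Norm{f'}{dil}-\varepsilon$''; the case $E(f)=\infty$ is vacuous.)

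For the inequality $\Norm{f}{dil}^{*\mathrm{L}}\ge E(f)$ the crux is the choice of good test morphisms, and the two-point spaces $\spSp_r$ are exactly the right gadget. Given $x,y\in\spSet$ with $\dist{f(x)}{f(y)}>\dist{x}{y}$ (the only case that matters, since $\Norm{}{dil}^{*\mathrm{L}}\ge 0$), I would take $r\coloneqq\dist{x}{y}\vee\dist{f(x)}{f(y)}>0$ and let $f'\colon\spSp_r\to\spSp$ send the two points of $\spSp_r$ to $x$ and $y$. Then by the pointwise formula $\Norm{f'}{dil}=r-\dist{x}{y}$ and, since $f'\comp* f$ sends these points to $f(x)$ and $f(y)$, $\Norm{f'\comp* f}{dil}=r-\dist{f(x)}{f(y)}$ (both quantities are $\ge 0$ by the choice of $r$), so $\Norm{f'}{dil}-\Norm{f'\comp* f}{dil}=\dist{f(x)}{f(y)}-\dist{x}{y}$. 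Taking suprema over $f'$ and then over $x,y$ gives $\Norm{f}{dil}^{*\mathrm{L}}\ge E(f)$, and hence \cref{eq:norm_dil_leftDual} (when $\dist{f(x)}{f(y)}=\infty$ one lets $r\to\infty$).

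Since $E=\Norm{}{dil}^{*\mathrm{L}}$ is again a seminorm, $\Norm{}{dil}^{*\mathrm{L}*\mathrm{L}}=E^{*\mathrm{L}}$, and I would compute $E^{*\mathrm{L}}=\Norm{}{dil}$ by rerunning the previous two paragraphs with the contraction term $\dist{x}{y}-\dist{f(x)}{f(y)}$ and the expansion term $\dist{f(x)}{f(y)}-\dist{x}{y}$ interchanged: for ``$\le$'' one uses instead the lower bound $\dist{f(f'(z))}{f(f'(w))}\ge\dist{f'(z)}{f'(w)}-\Norm{f}{dil}$, and for ``$\ge$'', with $x,y$ now satisfying $\dist{x}{y}>\dist{f(x)}{f(y)}$, one uses the same map $f'\colon\spSp_r\to\spSp$ but with $r\coloneqq\dist{x}{y}\wedge\dist{f(x)}{f(y)}$, so that $E(f')-E(f'\comp* f)=\dist{x}{y}-\dist{f(x)}{f(y)}$. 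This gives $\Norm{}{dil}^{*\mathrm{L}*\mathrm{L}}=\Norm{}{dil}$, i.e.\ $\Norm{}{dil}$ is left reflexive. I do not expect a deep difficulty anywhere; the two ``wrong-way'' supremum manipulations force the $\varepsilon$-approximations above, and the only genuine care is needed in a degenerate case of the reflexivity step, namely when $r\coloneqq\dist{x}{y}\wedge\dist{f(x)}{f(y)}$ vanishes, where one instead lets $r\to 0^{+}$ (so that $E(f'\comp* f)=0$ for $r>0$ while $E(f')\to\dist{x}{y}$); the $\pm\infty$ values and the multi-valuedness of $f$ are handled routinely by the conventions of \cref{sec:Met}.
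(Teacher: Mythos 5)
Your proof is correct and takes essentially the same route as the paper's: the upper bound for $\Norm{f}{dil}^{*\mathrm{L}}$ via the pointwise estimate of \cref{lem:snorm_dil_ptWise}, the lower bound via test morphisms $f'\colon\spSp_r\to\spSp$ from two-point spaces, and reflexivity by the symmetric argument (the paper merely shortcuts the direction $\Norm{}{dil}^{*\mathrm{L}*\mathrm{L}}\leq\Norm{}{dil}$ by citing the general bidual inequality \cref{eq:rightBidual_le_norm}, where you redo it by hand). One small point in your favour: choosing $r=\dist{x}{y}\vee\dist{f(x)}{f(y)}$ (resp.\ $\wedge$ in the reflexivity step) keeps both $\Norm{f'}{dil}$ and $\Norm{f'\comp* f}{dil}$ nonnegative before the $\supBd$-clamping at $0$, a detail the paper's displayed chain (which takes $r=\dist{x}{y}$) treats more loosely.
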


\begin{proof}
    The claim \cref{eq:norm_dil_leftDual} follows from the estimate
    \begin{align*}
            \Norm{f}{diam}^{*\mathrm{L}}
        &=  \sup_{f'} \left(\Norm{f'}{diam} - \Norm{f' \comp* f}{diam} \right)
    \\
        &\stackrel{\cref{eq:snorm_diam_ptWise_sup}}{=}  
            \sup_{f'} \Big(
                    \supBd_{\substack{z, z' \in \spSet',\\ x\in f'[z], x'\in f'[z']} } (\dist{z}{z'} - \dist{x}{x'} )
                -   \supBd_{\substack{z, z' \in \spSet',\\ y\in ff'[z], y'\in ff'[z']} } (\dist{x}{y} - \dist{y}{y'})
            \Big)
    \\  
        &\leq  
            \sup_{f'} \supBd_{z, z' \in \spSet', x\in f'[z], x'\in f'[z'], y\in f[x], y'\in f[x'] } \bigl(
                (\dist{z}{z'} - \dist{x}{x'} ) - (\dist{z}{z'} - \dist{y}{y'})
                \bigl)
    \\
        &\leq  
            \sup_{f'} \supBd_{z, z' \in \spSet', x\in f'[z], x'\in f'[z'], y\in f[x], y'\in f[x'] } \bigl(
                    \dist{y}{y'} - \dist{x}{x'}
                \bigl)
    \\
        &\leq
            \supBd_{x,x' \in \spSet, y\in f[x], y' \in f[x'] } \left(\dist{y}{y'} - \dist{x}{x'}\right)
    \\
        &=
            \supBd 
                \setBuilder*
                    {( r - \dist{y}{y'}) - ( r - \dist{x}{x'} )}
                    { \substack{ r>0, x, x' \in M \text{ with } \dist{x}{x'} = r,\\ y\in f[x], y' \in f[x']} }
    \\
        &=
            \supBd
                \setBuilder*
                    {\Norm{f'}{diam} - \Norm{f' \comp* f}{diam}}
                    {f'\colon \spSp_r \to \spSp \text{ with } r>0}
    \\
        & \leq
            \sup_{f'} \left(\Norm{f'}{diam} - \Norm{f' \comp* f}{diam} \right)
    \text.
    \end{align*}
    
    Left reflexivity follows from \cref{eq:rightBidual_le_norm}, i.e.\ $
        \Norm{}{diam}^{*\mathrm{L}*\mathrm{L}} \leq \norm{} 
    $, and the estimate
    \begin{align*}
            \Norm{}{diam}^{*\mathrm{L}*\mathrm{L}}
        &=
            \sup_{f'} \left(\Norm{f'}{diam}^{*\mathrm{L}} - \Norm{f' \comp* f}{diam}^{*\mathrm{L}} \right)
    \\
        &\geq
            \supBd_{f'\colon \spSp_r \to \spSp } 
                \left(\Norm{f'}{diam}^{*\mathrm{L}} - \Norm{f' \comp* f}{diam}^{*\mathrm{L}} \right)
    \\
        &\stackrel{\cref{eq:norm_dil_leftDual}}{=}
            \supBd
                \setBuilder*
                    {(\dist{x}{x'} - r) - (\dist{y}{y'} - r)}
                    {\substack{r>0, x, x' \in M \\ \text{with } \dist{x}{x'} = r,\\ y\in f[x], y' \in f[x'] }}
    \\
        &=
            \supBd_{x,y\in\spSet } \dist{x}{y} - \dist{f (x)}{f (y)}
    \\
        &\stackrel{\cref{eq:snorm_diam_ptWise_sup}}{=}
            \Norm{}{diam}
    \text.
    \qedhere
    \end{align*}
\end{proof}
\end{exerciseSolved}

\begin{exercise}
\begin{subenvironments}
\begin{xca}
    Show that $
        \Norm{f}{diam}^{*\mathrm{L}} \geq  \norm{f}_{-\diameter}
    $.
    Hint: Use \cref{lem:dil_duals}.
\end{xca}
\begin{xca}
    Find a counterexample to the reverse inequality $\Norm{f}{diam}^{*\mathrm{L}} \leq  \norm{f}_{-\diameter}$.
\end{xca}
\end{subenvironments}
\end{exercise}
\begin{exerciseSolved}
\begin{lemma}
    We have that $
        \Norm{f}{diam}^{*\mathrm{L}} \geq  \norm{f}_{-\diameter}
    $.
\end{lemma}
\begin{proof}
    Expressing $\Norm{f}{diam}^{*\mathrm{L}}$ by \cref{lem:dil_duals} we get the desired estimate:
    \begin{align*}
            \Norm{f}{diam}^{*\mathrm{L}}
        &=   
            \supBd_{x,y \in \spSet } 
                \left(\dist{f(x)}{f(y)} - \dist{x}{y}\right)
    \\ &=    \supBd_{x,y \in \spSet } 
                - \diameter\{x, y\}
                - (-\diameter\{f(x), f(y)\} )
    \\  &\geq\supBd_{x,y \in \spSet }
            - \diameter (f^*\{f(x), f(y)\})
            - \diameter\{x, y\}
                - (-\diameter\{f(x), f(y)\} )    
    \\  &\geq
            \supBd_{A \subset \spSet* } 
                - \diameter(f^*A)
                -(-\diameter A)
    \qedhere
    \end{align*}
\end{proof}
\begin{example}
    The reverse inequality $\Norm{f}{diam}^{*\mathrm{L}} \leq  \norm{f}_{-\diameter}$ does not hold as is seen from the example of the space $ \{0,1,2\} \subset \R $ with the induced distance and the map $ 0 \mapsto 0; 1,2 \mapsto 2 $ to $\R$: we have $ \Norm{f}{diam}^{*\mathrm{L}} \geq \dist{0}{2} - \dist{0}{1} = 1 $, but the preimage of any set containing $0$ and $1$ has always diameter 2.
\end{example}
\end{exerciseSolved}

Note that a function $f$ with $\Norm{f}{diam}^{*\mathrm L}$ bounded is single-valued.
Moreover let $T$ be the one point metric space. Observe that
\begin{equation}
    \diameter M = \Norm{M\to T}{diam}
\text{.}
\end{equation}
Further set $
        \Dist[dil](\spSp,\spSp*)
    \coloneqq
        \Dist_{\Norm{}{diam}}(\spSp,\spSp*)
    =
        \inf \setBuilder{\Norm{f}{diam}}{ f\colon \spSp \to \spSp* }
$.
We recall the well-known notion of Gromov-Hausdorff distance \cites[11.1.1]{Petersen16}[\S~3A]{Gromov99} employing the following shorthand notations for any $A \subseteq \spSet$
\begin{align*}
    A^{r)} &\coloneqq \setBuilder{x \in \spSet}{ \dist{x}{A} < r }
&&\text{for $ r > 0 $ and}\quad
\\
    A^{r]} &\coloneqq \bigcap_{r' > r} A^{r')}
&&\text{for $r \geq 0$.}
\end{align*}
A subset $X \subseteq \spSet $ is said to be \definiendum{$l$-dense in $\spSet$} if $ X^{l]} = \spSet $.
Let $A, B \subseteq \spSet$ be subsets of a metric space $\spSp$.
The \definiendum{Hausdorff distance} between $A$ and $B$ is given by
\begin{align*}
        \Dist[H](A, B)
    &\coloneqq
        \inf \setBuilder
            {r\in [0,\infty] }
            { A \subseteq B^{r]} \text{ and } B \subseteq A^{r]} }
\text{.}
\intertext{Let $M$, $N$ be metric spaces. Their \definiendum{Gromov-Hausdorff distance} is}
        \Dist[GH](\spSp, \spSp*)
    &\coloneqq
        \inf\setBuilder
            {\Dist[H](f_* \spSet, g_* \spSet*)}
            {
                \spSp \xrightarrow{f} \spSp[L]
                \xleftarrow{g} \spSp*
            }
\end{align*}
where $\spSp[L]$ ranges over all metric spaces and $f, g$ are metric embeddings.
Recall that a function is \definiendum{Cauchy continuous} if it preserves Cauchy sequences.

\begin{theorem}\label{thm:GHdist_locLipEquiv}
    The identity map on $ 
            \skeleton_0(\Cat{Met}, \Norm{}{diam})$
    with the Gro\-mov-Haus\-dorff metric $ \Dist[GH] $ on the domain and the distance $\Dist[diam]^+  
    $ on the codomain is 2-Lip\-schitz with Cauchy continuous inverse.
\end{theorem}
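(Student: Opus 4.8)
\emph{Overview and tools.} I would prove the two assertions separately. For both I would use the pointwise description \cref{eq:snorm_dil_ptWise_sup} of \cref{lem:snorm_dil_ptWise}, that $\Norm{f}{dil}=\supBd_{x,x'}(\dist{x}{x'}-\dist{f(x)}{f(x')})$, together with the formula \cref{eq:norm_dil_leftDual} for its left dual, $\Norm{f}{dil}^{*\mathrm{L}}=\supBd_{x,x'}(\dist{f(x)}{f(x')}-\dist{x}{x'})$. In these terms $\max(\Norm{f}{dil},\Norm{f}{dil}^{*\mathrm{L}})$ is precisely the distortion of the graph of $f$, so $\Norm{}{dil}$ controls only the contractive half of the distortion and is blind to expansion. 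A morphism witnessing a small value of $\Dist[dil]$ may be replaced by a single-valued selection without increasing its dilatation norm, so I treat the maps below as single-valued. On the Gromov--Hausdorff side I would use the standard correspondence description $\Dist[GH](\spSp,\spSp*)=\frac12\inf_R\operatorname{dis}(R)$, the infimum being over correspondences $R\subseteq\spSet\times\spSet*$ with $\operatorname{dis}(R)=\sup\setBuilder{\abs{\dist{x}{x'}-\dist{y}{y'}}}{(x,y),(x',y')\in R}$.

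\emph{The $2$-Lipschitz bound.} Given compact $\spSp,\spSp*$ and isometric embeddings into a common space $Z$ whose images lie at Hausdorff distance $<r$, I would define a morphism $\phi\colon\spSp\to\spSp*$ of $\Cat{Met}$ by $\phi(x)\coloneqq\setBuilder{y\in\spSet*}{d_Z(x,y)\le r}$; compactness of $\spSet*$ makes each $\phi(x)$ nonempty. The triangle inequality in $Z$ gives $\dist{y}{y'}\ge\dist{x}{x'}-2r$ for all $x,x'$ and all $y\in\phi(x)$, $y'\in\phi(x')$, so $\Norm{\phi}{dil}\le 2r$ by \cref{eq:snorm_dil_ptWise_sup}, hence $\Dist[dil](\spSp,\spSp*)\le 2r$. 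Letting $r$ decrease to $\Dist[GH](\spSp,\spSp*)$ and averaging with the symmetric inequality yields $\Dist[dil]^+(\spSp,\spSp*)\le 2\,\Dist[GH](\spSp,\spSp*)$, which is the $2$-Lipschitz statement.

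\emph{Cauchy continuity of the inverse.} Let $(\spSp_n)_n$ be $\Dist[dil]^+$-Cauchy; I must show it is $\Dist[GH]$-Cauchy. First, $\Norm{f}{dil}<\varepsilon$ forces $\diameter\spSet\le\diameter\spSet*+\varepsilon$ (apply \cref{eq:snorm_dil_ptWise_sup} to $\spSp\to T$), so the diameters are eventually bounded. Next, a map with $\Norm{f}{dil}<1$ separates any two points at distance $>1$, so, fixing a large index $N$, for all large $n$ a map $\spSp_n\to\spSp_N$ of dilatation norm $<1$ carries $\delta$-separated subsets to $(\delta-1)$-separated subsets of the same cardinality; compactness of $\spSp_N$ then furnishes a single modulus of total boundedness valid for all $\spSp_n$ with $n$ large. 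I would then invoke the Freudenthal--Hurewicz rigidity of \cref{lem:almostExpansiveEndo} in the quantitative form uniform in that modulus: an endomorphism $h\colon\spSp\to\spSp$ of a compact space with $\Norm{h}{dil}\le\eta$ satisfies $\Norm{h}{dil}^{*\mathrm{L}}\le\delta(\eta)$ and has $h_!\spSet$ $\delta(\eta)$-dense, with $\delta(\eta)\to0$ as $\eta\to0$. Given $\varepsilon>0$, pick $n,m$ large with $f\colon\spSp_n\to\spSp_m$ and $g\colon\spSp_m\to\spSp_n$ of dilatation norm $<\varepsilon$, and apply this rigidity to $f\comp* g$ on $\spSp_n$ and to $g\comp* f$ on $\spSp_m$ (each has norm $<2\varepsilon$): from $\Norm{f\comp* g}{dil}^{*\mathrm{L}}<\delta$ and $\Norm{g}{dil}<\varepsilon$ one gets $\dist{f(x)}{f(x')}\le\dist{g(f(x))}{g(f(x'))}+\varepsilon\le\dist{x}{x'}+\delta+\varepsilon$, which with $\Norm{f}{dil}<\varepsilon$ shows that the graph of $f$ has distortion $<\delta+\varepsilon$; and $\delta$-density of $(g\comp* f)_!\spSet_m=f_!(g_!\spSet_m)$ forces $f_!\spSet_n$ to be $\delta$-dense in $\spSet_m$. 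Adjoining to the graph of $f$ a near-preimage for each point of $\spSet_m$ produces a correspondence $R$ with $\operatorname{dis}(R)=O(\delta+\varepsilon)$, so $\Dist[GH](\spSp_n,\spSp_m)\le\frac12\operatorname{dis}(R)\to0$ as $n,m\to\infty$, as required.

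\emph{Main obstacle.} The crux is the one-sidedness of $\Norm{}{dil}$: smallness of $\Dist[dil]$ in both directions does not by itself produce an almost-isometry, and the missing upper bound on expansion must be extracted from compactness through the Freudenthal--Hurewicz phenomenon. The delicate point is that the modulus $\delta(\eta)$ in that phenomenon genuinely depends on the packing function of the space, not merely on its diameter, so it cannot be applied naively term by term; the preliminary step establishing a uniform packing modulus along the Cauchy sequence (via almost-injectivity of almost-expansive maps) is exactly what makes the final estimates tend to zero. Establishing the quantitative, uniform version of \cref{lem:almostExpansiveEndo} is the heart of the argument.
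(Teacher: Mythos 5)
Your proposal is correct and follows essentially the same route as the paper: the $2$-Lipschitz bound via the multi-valued closed-ball map $x\mapsto \Closedball{x}{l+\varepsilon}\cap\spSet*$ with $\Norm{\cdot}{dil}\le 2(l+\varepsilon)$, and Cauchy continuity by applying \cref{lem:almostExpansiveEndo} to the composite endomorphisms $f\comp* g$ to extract both density of the image and the missing upper bound on expansion. The only cosmetic differences are that you package the uniformity as a uniform total-boundedness modulus along the tail (the paper builds it into the lemma via the hypothesis $\Dist[dil](\spSp',\spSp)\le L$ and the constant $C(l-L,\spSp_N)$) and finish with a correspondence/distortion bound rather than the paper's explicit metric on $\spSet_n\sqcup\spSet_m$.
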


The next lemma is a quantitative version of \cite{FreudenthalHurewicz36}, cf.\ also \cite{BucicovschiMeyer15}\footnote{Note that the result of the latter is implied by the former using the closure \cref{eq:denslyDefFct_closure}.}
For its proof we require the terminology of packings, which can be elegently be introduced by what we just developed: 

For any metric space $\spSp\in \Cat{Met}$ a collection $ P \coloneqq \{p_1, \ldots, p_n \}\subseteq \spSet$ is called a \definiendum{packing of $\spSp$}. Further let the \definiendum{configurations} of a packing be $\configurations P \coloneqq \setBuilder{(p, q)\in X^{\times 2}}{p \neq q }$, the set of all pairs $(x,x')$ of distinct points $x$ and $x'$ from $X$. 
Further, we can assign a map $\operatorname{pack}_{\spSp}\colon \N \to \operatorname{GS}(\spSp)$
\[
        \operatorname{pack}_{\spSp}
    = 
        \lambda n. \setBuilder{ \configurations(P) }{P \subseteq \spSet, \# P = n}
\text.
\]
We can compose this map with the capacity $c_{\textnormal{diam}}$
\begin{equation}
    \N \xrightarrow{\operatorname{pack}_{\spSp} }
    \operatorname{GS}(\spSp) \xrightarrow{c_{\textnormal{diam}}}
    [0,\infty]
\end{equation}

For $l > 0$ an \definiendum{$l$-packing of $\spSp$} is a packing $P$ that $ c_{\textnormal{diam}} \configurations(P) > l $.
We define the \definiendum{metric $l$-packing number of $\spSp$} by
\begin{align}
    \card[pack]_l (\spSp)
    &\coloneqq  \sup \operatorname{pack}_{\spSp}^*(c_{\textnormal{diam}}^* (\left(l,\infty\right]) )
\\
    &=  \supBd\setBuilder{n}{\exists l\text{-packing } (p_1, \ldots, p_n) \text{ of } \spSp }
\text.
\end{align}
This definition is extended to non-positive $l$ by $\card[pack]_l (\spSp) = \infty$ (also for the terminal space).
As $\spSp$ is compact, $N \coloneqq \card[pack]_l (\spSp)$ is finite. A collection $ P \coloneqq (p_1, \ldots, p_N) $ is called a \definiendum{maximal $l$-packing of $\spSp$}. 

Further define the \definiendum{total distances} for a finite $P\subseteq \spSet$ and of $\spSp$ itself by
\begin{align*}
        \Norm{P}{}[tot]
    &\coloneqq
        \sum_{(p,q) \in \configurations P} \dist{p}{q}
&&\text{and}
\\
          \Norm{\spSp}{}[tot]_l
    &\coloneqq
        \sup\setBuilder
            {\Norm{P}{}[tot] }
            { P \text{ is an $l$-packing of } \spSp }  
\text{.}
\end{align*}
The integer valued function
\[
    l \mapsto \card[pack]_l (\spSp)
\]
is continuous from the left and monotonically decreasing in $l$.
Finally, for spaces $\spSp$ and $\spSp*$ with it is easy to see that 
\begin{equation}\label{eq:packDist_ineq}
    \card[pack]_l (\spSp*)
    \leq
    \card[pack]_{ l - \Dist[diam](\spSp*, \spSp) } (\spSp),
\end{equation}
indeed, given an $l$-packing $P \subseteq \spSet* $ with $\Norm{P}{}[tot] = \Norm{\spSp*}{}[tot]_l$
the set $f_*(P)$ is still an $l - \Dist[diam](\spSp*, \spSp)$-packing, 
provided that $l - \Dist[diam](\spSp*, \spSp) > 0$, or the right hand side if $\infty$, otherwise.

\begin{lemma}\label{lem:almostExpansiveEndo}
    Let $\spSp, \spSp'$ be compact metric spaces.
    For all $L,l$ with $ l > L \geq 0 $ it holds for sufficiently small $\delta > 0 $ that for every $h\colon \spSp' \to \spSp'$ with $ \Norm{h}{diam} < \delta $ and $ \Dist[diam](\spSp', \spSp) \leq L $ we have that
    \begin{claims}
        \item\label{claim:lem:almostExpansiveEndo_dense} 
            $h_*(\spSet')$ is $l$-dense, and
        \item\label{claim:lem:almostExpansiveEndo_upperDil} 
            $\Norm{h}{diam}^{*\mathrm{L}} \leq 4l + C\delta$ where $C=C( l- L, \spSp)$.
    \end{claims}
\end{lemma}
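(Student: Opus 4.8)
The plan is to run a quantitative version of the Freudenthal--Hurewicz argument after linearising $h$ to a single-valued self-map and turning the hypothesis $\Norm h{dil}<\delta$ into an almost-expansion estimate. Fix an arbitrary (set-theoretic) selection $s\colon\spSet'\to\spSet'$ of $h$. Then $s_!(\spSet')\subseteq h_!(\spSet')$ and $\Norm s{dil}\le\Norm h{dil}<\delta$, so by \cref{lem:snorm_dil_ptWise} and the triangle inequality ($\Norm{s^i}{dil}\le i\,\Norm s{dil}$) one gets
\[
  \dist{s^i(x)}{s^i(y)}\ \ge\ \dist xy-\Norm{s^i}{dil}\ \ge\ \dist xy-i\delta\qquad(x,y\in\spSet',\ i\ge 1).
\]
Since by \cref{lem:dil_duals} the left dual $\Norm h{dil}^{*\mathrm L}=\supBd_{x,y}(\dist{h(x)}{h(y)}-\dist xy)$ is governed by the selections of $h$ up to the harmless diagonal term $\supBd_x\diameter(h(x))\le\Norm h{dil}$, it suffices to prove \cref{claim:lem:almostExpansiveEndo_dense,claim:lem:almostExpansiveEndo_upperDil} for $s$ in place of $h$.

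The second ingredient is a packing estimate, and this is exactly where $l>L$ and $\Dist[dil](\spSp',\spSp)\le L$ enter. Given $\eta>0$ choose $g\colon\spSp'\to\spSp$ with $\Norm g{dil}<L+\eta$; then $g$ sends any subset of $\spSet'$ with pairwise distances $\ge\rho$ to a subset of $\spSp$ with pairwise distances $>\rho-L-\eta$, hence of cardinality at most the packing number of $\spSp$ at scale $\rho-L-\eta$. Letting $\eta\to0$: for every $\rho>L$ there is a finite $N(\rho,L,\spSp)$ bounding the cardinality of any $\rho$-separated subset of $\spSp'$, and likewise a finite $N_\ast(\rho,L,\spSp)$ for $\spSp'\times\spSp'$ with the sum metric.

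For \cref{claim:lem:almostExpansiveEndo_dense} I argue by contradiction: if some $p\in\spSet'$ had $\dist p{s_!(\spSet')}>l$, form the orbit $p_0=p$, $p_{k+1}=s(p_k)$. Since $p_{j-i}\in s_!(\spSet')$ for $j>i$, the displayed estimate gives $\dist{p_i}{p_j}\ge\dist{p_0}{p_{j-i}}-i\delta>l-i\delta$, so $p_0,\dots,p_n$ are pairwise $>(l-n\delta)$-separated for every $n$. Choosing $n$ with $l-n\delta\ge\tfrac{l+L}2$ and $n\ge N(\tfrac{l+L}2,L,\spSp)$ — possible as soon as $\delta\le\delta_0:=(l-L)\big/\big(2\,N(\tfrac{l+L}2,L,\spSp)\big)$ — contradicts the packing estimate. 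For \cref{claim:lem:almostExpansiveEndo_upperDil}, fix $a,b\in\spSet'$, put $\epsilon:=l+L$ (so $L<\epsilon\le 2l$), and iterate the pair: among the first $N_\ast(\epsilon,L,\spSp)+1$ of the points $(s^k(a),s^k(b))$ two lie within $\epsilon$ in the sum metric, say at steps $i<j$ with $m:=j-i$ and $i,m\le N_\ast(\epsilon,L,\spSp)$, so $\dist{s^i(a)}{s^j(a)}\le\epsilon$ and $\dist{s^i(b)}{s^j(b)}\le\epsilon$. Expansiveness of $s^i$ pulls the recurrence back: $\dist a{s^m(a)}\le\dist{s^i(a)}{s^j(a)}+\Norm{s^i}{dil}\le\epsilon+i\delta$, and similarly for $b$; combined with $\dist{s^m(a)}{s^m(b)}\ge\dist{s(a)}{s(b)}-(m-1)\delta$ and the triangle inequality this yields
\[
  \dist{s(a)}{s(b)}-\dist ab\ \le\ 2\epsilon+(2i+m-1)\delta\ \le\ 4l+2N_\ast(\epsilon,L,\spSp)\,\delta .
\]
Taking the supremum over $a,b$ (and over selections) gives $\Norm h{dil}^{*\mathrm L}\le 4l+C\delta$ with $C=C(l-L,\spSp):=2N_\ast(l+L,L,\spSp)$. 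Setting $\delta:=\delta_0$ proves both claims.

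The hard part is the pair-orbit step for \cref{claim:lem:almostExpansiveEndo_upperDil}. Unlike in the classical (isometric) Freudenthal--Hurewicz theorem one cannot let the recurrence scale $\epsilon$ tend to $0$: the length of the almost-periodic segment one extracts — hence the coefficient of $\delta$ — is a packing number of $\spSp'$, which blows up as $\epsilon\to0$. This forces $\epsilon$ to be of order $l$, which is both why the estimate carries a term of order $l$ and why the hypothesis $l>L$ is needed, namely so that the relevant scale ($\epsilon-2L=l-L>0$) stays positive after transporting packings of $\spSp'$ to $\spSp$ through the bound $\Dist[dil](\spSp',\spSp)\le L$. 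A secondary, purely bookkeeping, issue is the reduction from multi-valued $h$ to a single-valued selection, together with the verification that the diagonal contribution to the left dual norm is controlled.
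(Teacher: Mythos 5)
Your strategy is sound and genuinely different from the paper's. The paper fixes a maximal $l$-packing $P$ of $\spSp'$ and controls the \emph{total} distance $\sum_{(p,q)\in\configurations P}\dist{p}{q}$: since each summand can drop by at most $\delta$ under $h$ while the total cannot exceed its supremum over $(l-\delta)$-packings, no single pair of $P$ can be stretched by more than $(\#P)^2\delta$, and $l$-density of $h(P)$ then converts this into the $4l+C\delta$ bound for arbitrary pairs. You instead run the classical Freudenthal--Hurewicz orbit argument: iterate a selection $s$ of $h$, transport packings of $\spSp'$ (and of $\spSp'\times\spSp'$) into $\spSp$ through a map of dilatation $<L+\eta$, and extract an almost-recurrent time by pigeonhole. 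Your route has the merit that $\delta$ depends only on $l-L$ and $\spSp$, uniformly in $\spSp'$ (which is what the application to \cref{thm:GHdist_locLipEquiv} actually wants), whereas the paper's choice of $\varepsilon$ with equal packing numbers at scales $l$ and $l-\varepsilon$ a priori depends on $\spSp'$. The arithmetic slips ($2N_\ast$ should be $3N_\ast$; $>$ versus $\ge$ in the pigeonhole) are harmless since $C$ may depend on $l-L$ and $\spSp$.

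There is, however, one genuine gap: the reduction to single-valued selections. Under the paper's conventions, $\Norm{h}{dil}^{*\mathrm L}=\supBd_{x,y}\left(\dist{h(x)}{h(y)}-\dist{x}{y}\right)$ contains the diagonal contribution $\supBd_x\diameter(h[x])$ (take $x=y$ and two distinct values in $h[x]$), which no single selection sees. You dismiss it via $\supBd_x\diameter(h[x])\le\Norm{h}{dil}$, but this inequality is false. Counterexample: on the compact ultrametric space $\{0,1\}^{\N}$ with $\dist{\omega}{\omega'}=c_k$ for $k$ the first differing coordinate, where $c_1=1$, $c_k\downarrow 0$ and all gaps satisfy $c_k-c_{k+1}\le\eta$, the prepend map $h[\omega]=\{0\omega,1\omega\}$ has $\Norm{h}{dil}\le\eta$ (equal prefixes shift the first difference one step deeper, losing at most $\eta$; unequal prefixes give image distance $c_1\ge\dist{\omega}{\omega'}$), yet $\diameter(h[\omega])=c_1=1$ for every $\omega$. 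This does not contradict the lemma---your $\delta_0$ is forced below $\eta$ on such spaces---but it invalidates the step. The diagonal can be recovered from what you do prove: given $u,v\in h[x]$, $l$-density of $s_!(\spSet')$ yields $y$ with $\dist{u}{s(y)}\le l$, hence $\dist{x}{y}\le l+\delta$, hence $\dist{v}{s(y)}\le\dist{x}{y}+4l+C\delta$ by your bound applied to a selection sending $x\mapsto v$, giving $\dist{u}{v}\le 6l+(C+1)\delta$. That salvages \cref{claim:lem:almostExpansiveEndo_upperDil} only with $6l$ in place of $4l$; to keep $4l$ you must run the pair estimate directly for two values $\tilde x,\tilde y\in h[x]$ against the image of a packing, as the paper does.
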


\begin{proof}
    For the first claim, let $h\colon \spSp' \to \spSp'$ be a map between compact metric spaces.
    By monotonicity and continuity from the left of  $l \mapsto \card[pack]_l (\spSp)$
    we can find some small $\varepsilon > 0$ such that for all $\delta \in (0,\varepsilon]$, we have $
        \card[pack]_{l-\delta} (\spSp') = \card[pack]_l (\spSp') 
    $.
    For any $l$-packing $P = (p_1, \ldots, p_N)$ with $N \coloneqq \card[pack]_l (\spSp')$ and $h$ with $\Norm{h}{diam} < \delta \leq \varepsilon$ the collection $ h_*(P) $ is an $(l-\delta)$-packing.
    Since $\card[pack]_{l - \delta} (\spSp') = \card[pack]_l (\spSp') = \card[pack]_{l - \varepsilon} (\spSp')$ this implies that $ h_*(P) $ is even a maximal $(l-\delta)$-packing by \cref{eq:packDist_ineq}.
    Hence $h_*(\spSet')$ is $l$-dense; actually even $(h_*(P))^{l)} = \spSet'$. Thus \cref{claim:lem:almostExpansiveEndo_dense} holds.
    
    For \cref{claim:lem:almostExpansiveEndo_upperDil}, i.e.\ $\Norm{h}{diam}^{*\mathrm L} \leq 4l + C\delta$, assume further that $ \Dist[diam](\spSp', \spSp) \leq L $.
    Observe that it is possible to find an $l$-packing $P$ in $ h^*\spSet' $ such that
    \[
        \Norm{P}{}[tot] > \Norm{\spSp'}{}[tot]_l - \big(\card[pack]_l (\spSp')\big)^2 \delta
    \text{.}
    \]
    Thus by \cref{eq:packDist_ineq}
    \[
        \Norm{P}{}[tot] > \Norm{\spSp'}{}[tot]_l - \big(\card[pack]_{l-L} (\spSp)\big)^2 \delta
    \text{.}
    \]
    We still assume $\delta \leq \varepsilon$.
    Further observe that $
            \Norm{h(P)}{}[tot]
        \leq
            \Norm{\spSp'}{}[tot]_{l-\delta}
        =
            \Norm{\spSp'}{}[tot]_l
        <
            \Norm{P}{}[tot] + \big(\card[pack]_l (\spSp')\big)^2 \delta
        \leq
            \Norm{P}{}[tot] + \big(\card[pack]_{l-L} (\spSp)\big)^2 \delta
    $ and hence, 
    \begin{equation*}
            \sum_{(p,q) \in \configurations h(P)} \dist{p}{q}
        \leq
                \sum_{(p,q) \in \configurations P} \dist{p}{q}
            +   \big(\card[pack]_{l-L} (\spSp)\big)^2 \delta
    \text{.}
    \end{equation*}
    From $\dist{h(p)}{h(q)} \geq \dist{p}{q} - \delta $ and a summand-wise comparison we get that 
    for all $p,q \in P$ (using the notation $\tilde p = h(p), \tilde q = h(q)$),
    \begin{align}
    \nonumber
            \dist{\tilde p}{\tilde q} 
        &\leq 
                \dist{p}{q} 
            +   \big(\card[pack]_l (\spSp') - 1\big)^2\delta
            +   \big(\card[pack]_{l-L} (\spSp)\big)^2 \delta
    \\
    \nonumber
        &\leq
                \dist{p}{q} 
            +   \big(\card[pack]_{l-L} (\spSp) - 1\big)^2\delta
            +   \big(\card[pack]_{l-L} (\spSp)\big)^2 \delta
    \\
    \label{eq:packing_estimate}
        &\leq    \dist{p}{q} + C'\delta
    \end{align}
    where the parameter $C'$ depends upon $\spSp$ and $l - L$.
    
    To conclude the argument for \cref{claim:lem:almostExpansiveEndo_upperDil}, let $x,y \in \spSet'$.
    Set $ \tilde x = h(x) $ and $\tilde y = h(y)$.
    We derive an  estimate for $\tilde p = h(p)$ and $ \tilde q = h(p) $ with $ p,q \in  P$ such that $\dist{\tilde x}{\tilde p}, \dist{\tilde y}{\tilde q} < l$.
    Observe that $ \dist{x}{p}, \dist{y}{q} \leq l + \delta $, so we have
    \[
                \dist p q 
        \leq    \dist p x  +  \dist x y  +  \dist y q
        \leq    \dist x y  +  2 (l+\delta)
    \text.
    \]
    Now we can apply \cref{eq:packing_estimate}
    \[
                \dist {x} {y}
        \geq    \dist p q  -  2 (l+\delta)
        \geq    \dist {\tilde p} {\tilde q}  -  C'\delta  -  2 (l+\delta)
    \text.
    \]
    Finally, we obtain by setting $C \coloneqq C' + 2$
    \[
                \dist {\tilde x} {\tilde y}
        \leq    \dist {\tilde x} {\tilde p} + \dist {\tilde p} {\tilde q} + \dist {\tilde q} {\tilde y}
        \leq    l + \big(\dist x y + C' \delta + 2(l + \delta)\big) + l
        =       4l + C\delta
    \text.
    \qedhere
    \]
\end{proof}

This theorem in particular implies that in the category $\boxed{M}(\Cat{Met}, \Norm{}{diam}) $ every endomorphism is an isomorphism. Such categories are called \definiendum{EI-cate\-go\-ries} and have been studied for several decades \cite{tomDieck87}.

\begin{exercise}
\begin{xca}
    Find a counterexample showing that \cref{claim:lem:almostExpansiveEndo_dense} in \cref{lem:almostExpansiveEndo} does not hold when $\Norm{}{diam}$ is replaced by $ \norm{}_{-\diameter} $.
\end{xca}
\end{exercise}

\begin{exerciseSolved}
\begin{example}[counterexample]
    With regard to a global estimate on the density of $h(\spSet)$ in \cref{claim:lem:almostExpansiveEndo_dense} in \cref{lem:almostExpansiveEndo} we give a counterexample that shows that the claim does not hold for $ \norm{}_{-\diameter} $. 
    Consider the metric on $\spSet \coloneqq \{0,1,\ldots,n\}$ determined by
    \[
            \dist{i}{j}
        \coloneqq
            \begin{cases*}
                j - 1   & if $j\geq 2$  \\
                1       & if $j=1$
            \end{cases*}
    \]
    for $i<j$ and the map $h\colon \spSp \to \spSp$ defined by
    \[
            h(i) \coloneqq 0 \vee (i-1)
    \text{.}
    \]
    Indeed $\norm{h}_{-\diameter} = 0 $ but $n \notin h(\spSet)$ and, hence, $\Openball{n}{n-1} \subseteq (h_* \spSet)^{\mathrm{C}}$.
\end{example}
\end{exerciseSolved}

\begin{proof}[Proof of \cref{thm:GHdist_locLipEquiv}]
    Set $ \skeleton_0 \coloneqq \skeleton_0(\Cat{Met}, \Norm{}{diam})$.
    First, we prove the 2-Lip\-schitz property of $ \id \colon (\skeleton_0, \Dist[GH]) \to (\skeleton_0, \Dist[diam]^+) $.
    Set $l \coloneqq \Dist[GH](\spSp,\linebreak[0] \spSp*)$.
    For every $\varepsilon > 0$ we have embeddings $\spSp \xrightarrow{f} \spSp[L] \xleftarrow{g} \spSp*$ such that $f_* \spSet \subseteq (g_* \spSet*)^{l+\varepsilon]} $ and $ g_* \spSet* \subseteq (f_* \spSet)^{l+\varepsilon]}$.
    Set $h[x] \coloneqq \Closedball{x}{l+\varepsilon} \cap \spSet*$, where the ball and the intersection are in $\spSp[L]$.
    Observe that $
            \Norm{h}{diam}
        =   \sup_{x,y} \dist{x}{y} - \dist{f(x)}{f(y)}
        \geq\sup_{x,y} \dist{x}{f(x)} + \dist{y}{f(y)}
        \geq 2(l + \varepsilon)
    $.
    Since $\varepsilon > 0$ can be chosen arbitrarily small, $ 
        \Dist[diam](\spSp, \spSp*) \leq 2l 
    $. The analog argument with $\spSp$ and $\spSp*$ interchanged gives $ 
        \Dist[diam](\spSp, \spSp*) \leq 2l 
    $. This implies $ 
        \Dist[diam]^+(\spSp, \spSp*) \leq 2l 
    $.
    
    To show that $ 
        \id\colon (\skeleton_0, \Dist[diam]^+) \to (\skeleton_0, \Dist[GH])  
    $ is Cauchy continuous it suffices to show that for any Cauchy sequence $ \spSp_n $ with respect to $\Dist[diam]^+$ the following holds: 
    for all $N \in \N, L> 0$ we have that if $\forall n > N\colon \Dist[diam]^+(\spSp_N, \spSp_n) < \nicefrac L 2$,
    then $ \exists M \geq N\colon \forall n,m \geq M\colon \Dist[GH](\spSp_n, \spSp_m) \leq 5 L $.
    
    Take such $N$ and $L> 0$ so that for all $n > N$ we have $ \Dist[diam]^+(\spSp_N, \spSp_n) < \nicefrac L 2 $.
    We know that $ \Dist[diam](\spSp_n, \spSp_N) < L $ for all $n \geq N$.
    Let $C = C(\spSp_N, L)$ be the parameter from \cref{lem:almostExpansiveEndo}.
    Choose $M \geq N $ so large that for all $n,m > M$ there are maps $
        \spSp_n \xrightarrow{f_{nm}} \spSp_m \xrightarrow{g_{nm}} \spSp_m
    $ such that $ \Norm{f_{nm}\comp*g_{nm}}{diam} < \nicefrac L C \wedge L $ and $ \Norm{f_{nm}}{diam}, \Norm{g_{nm}}{diam} < L $.
    Set $ h_{nm} \coloneqq f_{nm}\comp*g_{nm}$.
    Hence by \cref{lem:almostExpansiveEndo} for sufficiently large $n$ 
        we have that $h_{nm !}(\spSet_n)$ is $L$-dense in $\spSp_n$ and $ \Norm{h_{nm}}{diam}^{*\mathrm{L}} \leq 5L $.
    Thus $g_{nm !}(\spSet_m)$ is $L$-dense in $\spSp_n$.
    Therefore $ f_{nm !}(\spSet_n) $ must be $2L$-dense in $\spSp_m$.
    
    On $\spSet_n \sqcup \spSet_m$ consider the symmetric function determined by the assignment
    \begin{equation}\label{eq:thm:GHdist_locLipEquiv_dist}
            \Dist_{nm}(x,y)
        \coloneqq
            \begin{cases*}
                \dist{x}{y}_n     &if $x,y \in \spSet_n$ \\
                \dist{x}{y}_m   &if $x,y \in \spSet_m$ \\
                    3L
                +   \inf\limits_{x' \in \spSet_n} \dist{x}{h_{nm}(x')} + \dist{f_{nm}(x')}{y}_n
                    &if $x \in \spSet_n, y \in \spSet_m$.
            \end{cases*}
    \end{equation}
    Obviously, $ \Dist_{nm} $ distinguishes points.
\begin{exercise}
    The triangle inequality is left to \cref{xca:GHdist_locLipEquiv_dist_triangle}.
\end{exercise}
\begin{exerciseSolved}
    Moreover it fulfills the triangle inequality: 
    for convenience set $ f\coloneqq f_{nm} $, $ g\coloneqq g_{nm} $, and $ h\coloneqq h_{nm} $.
    Take three points $x,y,z \in \spSet_n \sqcup \spSet_m$.
    
    The cases $x,y,z \in \spSet_n$ and $x,y,z \in \spSet_m$ are obvious.
    
    In case $x,y \in \spSet_n$ but $z\in\spSet_m$ observe that $ 
            \inf\limits_{x' \in \spSet_n} \dist{x}{h (x')} + \dist{f (x')}{z}_n
        \leq\inf\limits_{x' \in \spSet_n} \dist{x}{y} + \dist{y}{h (x')} + \dist{f (x')}{z}_n
    $ and, thus, $\Dist_{nm}(x,z) \leq \Dist_{nm}(x,y) + \Dist_{nm}(y,z) $.
    The case $x \in \spSet$ and $y,z\in\spSet_n$ is parallel.
    
    In the case  $x, z \in \spSet_n$ and $y\in\spSet_m$ observe 
    \begin{align*}
    \MoveEqLeft[2]
            \Dist_{nm}(x,y) + \Dist_{nm}(y,z) 
    \\
        &=   6L 
            + \inf\setBuilder
                {\dist{x}{h (y')} + \dist{f (x')}{y}_n + \dist{z}{h (z')} + \dist{f (z')}{y}_n }
                {x', z' \in \spSet}
    \\
        &\geq
            6L 
            + \inf\setBuilder
                {\dist{x}{h (x')} + \dist{f (x')}{f (z')}_n + \dist{z}{h (z')}}
                {x', z' \in \spSet} 
    \\
        &\geq
            5L 
            + \inf\setBuilder
                {\dist{x}{h (x')} + \dist{x'}{z'} + \dist{z}{h (z')}}
                {x', z' \in \spSet} 
    \\
        &\geq
            5L 
            + \inf\setBuilder
                {\dist{x}{h (x')} + \dist{h (x')}{h (z')} - \Norm{h }{diam}^{*\mathrm{L}}
            + \dist{z}{h (z')}}
                {x', z' \in \spSet} 
    \\
        &\geq
            \inf\setBuilder
                {\dist{x}{h (x')} + \dist{h (x')}{h (z')} + \dist{z}{h (z')}}
                {x', z' \in \spSet}
    \\
        &\geq
            \dist{x}{z}
    \text{.}
    \end{align*}
    
    In the remaining case $x, z \in \spSet_n$ and $y\in\spSet$ we get 
    \begin{align*}
    \MoveEqLeft[2]
            \Dist_{nm}(x,y) + \Dist_{nm}(y,z) 
    \\
        &=   6L 
            + \inf\setBuilder
                {\dist{y}{h (y')} + \dist{f (y')}{x}_n + \dist{y}{h (y'')} + \dist{f (y'')}{z}_n }
                {y', y'' \in \spSet} 
    \\
        &\geq
            6L 
            + \inf\setBuilder
                {\dist{h (y')}{h (y'')} + \dist{f (y')}{x}_n + \dist{f (y'')}{z}_n}
                {y', y'' \in \spSet} 
    \\
        &\geq
            5L 
            + \inf\setBuilder
                {\dist{f (y')}{f (y'')}_n + \dist{f (y')}{x}_n + \dist{f (y'')}{z}_n}
                {y', y'' \in \spSet} 
    \\
        &\geq    
            L + \dist{x}{z}
    \text{.}
    \end{align*}
 
\end{exerciseSolved}
    Within $(\spSet_n \sqcup \spSet_m, \Dist_{nm})$ we have $ \spSet_m^{5L]} \supseteq \spSet_n $
    since $ f_{nm}(\spSet_n) $ is $2L$-dense in $\spSp_m$.
    By the same fact, $ \spSet_n^{5L]} \supseteq ((f_{nm})_* M)^{2L]} \supseteq \spSet_m $.
    Hence $\Dist[GH](\spSp_n, \spSp_m) \leq 5L$.
\end{proof}

\begin{exercise}
\begin{xca}\label{xca:GHdist_locLipEquiv_dist_triangle}
    Check the triangle inequality for the symmetric function \cref{eq:thm:GHdist_locLipEquiv_dist}.
\end{xca}
\end{exercise}

The fact that the Gromov-Hausdorff space is complete \cite[11.1.1]{Petersen16} implies:
\begin{corollary}\label{cor:dilMetric}
    The space $ (\skeleton_0(\Cat{Met}, \Norm{}{diam}), \Dist[diam]^+ ) $ is a complete metric space.
\end{corollary}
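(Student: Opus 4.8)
Write $\skeleton_0 \coloneqq \skeleton_0(\Cat{Met}[cpt], \Norm{}{dil})$. The plan is to deduce completeness of $(\skeleton_0, \Dist[dil]^+)$ from completeness of the Gromov--Hausdorff space $(\skeleton_0, \Dist[GH])$ by transporting Cauchy sequences forward and limits back along the identity bijection, using the two regularity statements packaged in \cref{thm:GHdist_locLipEquiv}. First I would record that $\Dist[dil]^+$ is genuinely a metric, not merely a pseudometric: since $\Norm{}{dil}$ restricts to a norm on $\Cat{Met}[cpt]$, the induced pqmetric $\Dist[dil] = \Dist_{\Norm{}{dil}}$ is a metric (as noted after the symmetrization formulas in \cref{sec:def}), and hence so is its symmetrization $\Dist[dil]^+$. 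After this it only remains to prove completeness.

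So let $(\spSp_n)_{n}$ be a $\Dist[dil]^+$-Cauchy sequence of points of $\skeleton_0$. By \cref{thm:GHdist_locLipEquiv} the inverse of the identity, $\id\colon (\skeleton_0, \Dist[dil]^+) \to (\skeleton_0, \Dist[GH])$, is Cauchy continuous, i.e.\ preserves Cauchy sequences; applied to $(\spSp_n)_n$ this says precisely that the very same sequence is $\Dist[GH]$-Cauchy. Since $(\skeleton_0, \Dist[GH])$ is complete \cite[11.1.1]{Petersen16}, there is a point $\spSp_\infty \in \skeleton_0$ with $\Dist[GH](\spSp_n, \spSp_\infty) \to 0$.

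Finally I would carry this limit back to the dilatation metric: \cref{thm:GHdist_locLipEquiv} also asserts that $\id\colon (\skeleton_0, \Dist[GH]) \to (\skeleton_0, \Dist[dil]^+)$ is $2$-Lipschitz, so $\Dist[dil]^+(\spSp_n, \spSp_\infty) \leq 2\,\Dist[GH](\spSp_n, \spSp_\infty) \to 0$. Thus an arbitrary $\Dist[dil]^+$-Cauchy sequence converges in $\Dist[dil]^+$, which is exactly the completeness claim. I do not expect any genuine obstacle at this stage: all of the analytic content is already encapsulated in \cref{thm:GHdist_locLipEquiv} (and, through it, in \cref{lem:almostExpansiveEndo}), and the only thing demanding a moment's care is the bookkeeping of directions --- one invokes Cauchy continuity of the \emph{inverse} map to ensure the sequence remains Cauchy inside the complete metric $\Dist[GH]$, and Lipschitz continuity of the \emph{forward} map to transport the resulting limit back to $\Dist[dil]^+$.
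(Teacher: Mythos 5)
Your argument is correct and is essentially the paper's own: the paper derives the corollary in one line from completeness of the Gromov--Hausdorff space together with \cref{thm:GHdist_locLipEquiv}, which is exactly the transport of Cauchy sequences and limits you spell out. Your additional remark that $\Dist[dil]^+$ is a genuine metric (via \cref{cor:MetCpt_normed}) is a detail the paper leaves implicit but is handled correctly.
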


\begin{exercise}
\begin{xca}\label{cor:MetCpt_normed}
    Show that the category $ (\Cat{Met}, \Norm{}{diam}) $ is normed. Hint: use \cref{lem:almostExpansiveEndo} for \cref{lst:def:norm1} and \cref{eq:snorm_dil_completion} for \cref{lst:def:norm2}.
\end{xca}
\end{exercise}

\begin{exerciseSolved}
\begin{corollary}\label{cor:MetCpt_normed}
    The category $ (\Cat{Met}, \Norm{}{diam}) $ is normed.
\end{corollary}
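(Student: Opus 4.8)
\emph{Setup and axiom \cref{lst:def:norm2}.} The two seminorm axioms are already known: $\Norm{}{dil}=\norm{}_{\diameter}$ is the seminorm induced by the precapacity $\diameter$, so \cref{lst:def:seminorm1} and \cref{lst:def:seminorm2} hold by the general discussion of \cref{sec:capacities}. Thus only \cref{lst:def:norm1} and \cref{lst:def:norm2} must be checked for objects $\spSp=(\spSet,\spMet)$, $\spSp*$ of $\Cat{Met}[cpt]$. For \cref{lst:def:norm2}, assume that for every $n$ there is $f_n\colon\spSp\to\spSp*$ with $\Norm{f_n}{dil}\le\nicefrac1n$; by \cref{eq:snorm_dil_ptWise_sup} this says $\dist xy\le\dist uv+\nicefrac1n$ for all $x,y\in\spSet$ and all $u\in f_n[x]$, $v\in f_n[y]$. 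Fix a countable dense set $\{d_i\}_{i\in\N}\subseteq\spSet$ (compact metric spaces are separable), choose $y_{n,i}\in f_n[d_i]$, and by compactness of $\spSet*$ together with a diagonal extraction pass to a subsequence along which $y_{n,i}\to y_i$ for every $i$. Letting $n\to\infty$ in $\dist{d_i}{d_j}\le\dist{y_{n,i}}{y_{n,j}}+\nicefrac1n$ gives $\dist{d_i}{d_j}\le\dist{y_i}{y_j}$, so the densely defined map $g\colon d_i\mapsto y_i$ has $\Norm{g}{dil}=0$ by \cref{eq:snorm_dil_ptWise_sup} (the supremum there ranging over the domain). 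Since $\spSet*$ is compact the closure $\bar g$ of \cref{eq:denslyDefFct_closure} is everywhere defined, hence a morphism of $\Cat{Met}[cpt]$, and $\Norm{\bar g}{dil}=\Norm{g}{dil}=0$ by \cref{eq:snorm_dil_completion}; this $\bar g$ is the required modulator.

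\emph{Axiom \cref{lst:def:norm1}, part one.} Let $f\colon\spSp\to\spSp*$ and $g\colon\spSp*\to\spSp$ be modulators. By \cref{lst:def:seminorm2} the endomorphisms $h\coloneqq f\comp* g\colon\spSp\to\spSp$ and $h'\coloneqq g\comp* f\colon\spSp*\to\spSp*$ satisfy $\Norm{h}{dil}=\Norm{h'}{dil}=0$. Since $\Dist[dil](\spSp,\spSp*)\le\Norm{f}{dil}=0$, we may apply \cref{lem:almostExpansiveEndo} to $h$ with comparison space $\spSp*$ and $L=0$: for every $l>0$ the image $h_!(\spSet)$ is $l$-dense and $\Norm{h}{dil}^{*\mathrm{L}}\le 4l+C\delta$ for the threshold $\delta$ of that lemma. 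Because $\Norm{h}{dil}=0$ lies below every admissible $\delta$ (and, inspecting the proof of \cref{lem:almostExpansiveEndo}, the estimate holds with $\Norm{h}{dil}$ in place of $\delta$), we get $\Norm{h}{dil}^{*\mathrm{L}}\le 4l$ for all $l>0$, hence $\Norm{h}{dil}^{*\mathrm{L}}=0$. By \cref{eq:norm_dil_leftDual} this means $\dist{h(x)}{h(y)}\le\dist xy$, which combined with $\Norm{h}{dil}=0$ forces $h$ to preserve distances (in particular $h$ is single-valued, having finite left dual). As $h_!(\spSet)$ is $l$-dense for every $l$ it is dense in $\spSet$, and being the distance-preserving, hence continuous, image of the compact space $\spSet$ it is closed; therefore $h$ is a surjective isometry of $\spSp$, and symmetrically $h'$ is a surjective isometry of $\spSp*$.

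\emph{Axiom \cref{lst:def:norm1}, part two.} Pick single-valued selections $\tilde f,\tilde g$ of $f,g$; these are again modulators, since a modulator satisfies $\dist xy\le\dist uv$ for \emph{all} $u\in f[x]$, $v\in f[y]$. Then $\tilde g\circ\tilde f$ and $\tilde f\circ\tilde g$ are selections of the single-valued maps $h$ and $h'$, hence equal to them. Consequently $\tilde f$ is injective (because $h=\tilde g\circ\tilde f$ is) and surjective (because $h'=\tilde f\circ\tilde g$ is), and the chain $\dist xy=\dist{h(x)}{h(y)}\ge\dist{\tilde f(x)}{\tilde f(y)}\ge\dist xy$ forces $\tilde f$ to preserve distances. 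So $\tilde f\colon\spSp\to\spSp*$ is an isometric bijection, an isomorphism in $\Cat{Met}[cpt]$ with $\Norm{\tilde f}{dil}=\Norm{\tilde f^{-1}}{dil}=0$; thus $\spSp$ and $\spSp*$ are norm isomorphic, establishing \cref{lst:def:norm1}.

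\emph{Where the difficulty lies.} The routine parts are the diagonal compactness argument for \cref{lst:def:norm2} and the passage from the multivalued composites to single-valued selections. The real content is the step inside \cref{lst:def:norm1} that upgrades ``$\Norm{h}{dil}=0$'' to ``$h$ is an isometry''; this is exactly where one must invoke the (quantitative Freudenthal--Hurewicz) estimate of \cref{lem:almostExpansiveEndo} and push its error parameters $l,\delta$ to $0$, which succeeds precisely because $\Norm{h}{dil}$ vanishes exactly rather than merely being small. If one prefers not to cite the quantitative lemma, the same conclusion follows from the classical fact that a non-contracting self-map of a compact metric space is a surjective isometry, proved by a sequential-compactness argument on the iterates $(h^{n}(x),h^{n}(y))\in\spSet\times\spSet$.
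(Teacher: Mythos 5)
Your proof is correct and follows essentially the same route as the paper: \cref{lst:def:norm1} via \cref{lem:almostExpansiveEndo} applied to the composites $f\comp* g$ and $g\comp* f$ to upgrade vanishing dilatation norm to a surjective isometry, and \cref{lst:def:norm2} via a diagonal compactness argument on a countable dense set followed by the closure construction \cref{eq:denslyDefFct_closure} and \cref{eq:snorm_dil_completion}. Your write-up is in fact more explicit than the paper's on two points it glosses over --- the passage from multivalued modulators to single-valued selections and the closedness argument giving surjectivity of $h$ --- so no gaps remain.
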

\begin{proof}
    For the first property, \cref{lst:def:norm1}, note that given two maps $f\colon \spSp \to \spSp*$ and $ g\colon \spSp* \to \spSp $ with $ \Norm{f}{diam} = \Norm{g}{diam} = 0 $ their compositions $ f\comp*g $ and $ g \comp* f $ have vanishing dilatation norm as well.
    \Cref{lem:almostExpansiveEndo} implies in this case that $ f\comp*g $ and $ g\comp*f $ have a dense image and are contractions.
    Hence both maps are also isometries in this set up. Thus by set theoretic arguments $f$ and $g$ are bijections.
    Since $f$ and $g$ are both expansions they also have to be contractions: assume for instance in the case of $f$ that the distance between two points $x,y\in \spSet$ is expanded, i.e.\ $ \dist x y < \dist {f(x)} {f(y)} $, then also $ \dist x y < \dist {g(f(x))} {g(f(y))} $ in contradiction to $\Norm{f\comp*g}{diam}^{*\mathrm{L}} = 0$.
    
    For the second property, \cref{lst:def:norm2}, take $ f_n\colon \spSp \to \spSp* $ with $\Norm{f_n}{diam} \to 0$.
    We do a diagonal argument.
    Take a dense sequence $x_1, \ldots \in \spSp$.
    For each $i=1,\ldots$ choose a sequence $x_{ij}$ such that $ x_{ij} \to x_i $ as $ j\to\infty $ and $ x_{ij} \in f_j^* \spSet* $ for each $j$.
    We choose a sequence $ j_1(1), j_1(2), \ldots $ such that for some $ y_{1n} = f_{j_1(n)} (x_{1n}) $ the sequence $y_n$ converges as $n\to\infty$ (using compactness of $\spSp*$).
    We proceed by choosing a further sequence $ j_2(1), j_2(2), \ldots $ such that some $ y_{2n} = f_{j_1(j_2(n))} (x_{2n}) $ converge as $n\to\infty$.
    We continue this procedure with sequences $ j_3(\blank), j_4(\blank), \ldots $.
    We define the densely defined function $f\colon \spSp\to\spSp*$ by $ 
        f(x_i) = \lim\limits_{n\to\infty} y_{jn} 
    $.
    By construction $\Norm{f}{diam} = 0$.
    Thus $\Norm{\bar f}{diam} = 0$ as follows from \cref{eq:snorm_diam_ptWise_sup} and an easy limit argument.
\end{proof}
\end{exerciseSolved}

\begin{workingNotes}

\section{Metric measure spaces}

\subsection{Prokhorov seminorm}
\label{ssec:Prokhorov}
\todo{the induced metric should be Lipschitz equivalent to the one from \cite{AbrahamDelmasHoscheit13}}
Let $\Cat{mMet}$ denote the category of metric measure spaces and Borel measurable maps.
Define the open and closed thickenings
\begin{align*}
    A^{r)} &\coloneqq \bigcup_{x \in A} \Openball{x}{r}
&&\text{for $r>0$ and}
\\
    A^{r]} &\coloneqq \bigcap_{r' > r} A^{r')}
&&\text{for $r\geq 0$.}
\end{align*}
Let $ \Cat{SmMet} $ be the category of metric measure spaces with scalars where
\begin{align*}
        \Cat{SmMet}_0
    &\coloneqq
        \setBuilder{(\spSp, r)}{ \spSp\in\Cat{mMet}_0 \text{ and } r \in [0,\infty] }
\\
        \hom[SmMet]{(\spSp, r)}{(\spSp*, s)}
    &\coloneqq
        \begin{cases*}
            \hom[mMet]{\spSp}{\spSp*}    & if $ r \leq s $,   \\
            \emptyset       & else.
        \end{cases*}
\end{align*}
Moreover let $ S\colon \Cat{mMet} \to \Cat{SmMet} $ denote the embedding that assigns to each metric measure space $\spSp$ the infinitely thickened space $(\spSp, \infty)$. This becomes a concrete category with distinguished subobjects by the assignment
\[
    \operatorname{SO}(\spSp) = \setBuilder{(\spSp[A], r)}{\spSp[A] \subseteq \spSp \text{ and } r \in (0,\infty) }
\]

The function
\begin{equation}
    c_P(A, v) \coloneqq \inf \setBuilder{\delta>0}{ \mu( A^{\delta)}) + \delta \geq v }
\end{equation}
is monotone as whenever $ (A, v) \subseteq (B, w) $, i.e.\ $A \subseteq B$ and $v \leq w$, we have for any $ \delta $ with $ \mu (A^{\delta)}) + \delta < v $ we have that $
    \mu (A^{\delta)}) + \delta < w
$. We call $c_P(A, v)$ the \definiendum{Prokhorov capacity}. Denote the \definiendum{Prokhorov seminorm} by $\Norm{}{P}$, defined using the Prokhorov capacity, i.e.\ \todo{probably reverse definition better, i.e.\ $\norm{}_{-c_P}$}
\begin{align*}
    \Norm{}{P} &\coloneqq \norm{}_{c_P}
\\
    &=
        \supBd_{\substack{v \geq 0,\\ A \in \mathcal{B}(\spSp*)} }
        \inf \setBuilder{\delta>0}{ \mu( (f^*A)^{\delta)}) + \delta \geq v }
        - \inf \setBuilder{\delta>0}{ \nu( A^{\delta)}) + \delta \geq v }
\text.
\end{align*}
We observe that for $\pmb{\emptyset} = (\emptyset, \Dist_{\pmb{\emptyset}}, 0)$ the initial object, i.e.\ empty space with zero measure, the volume of $\spSp$ can be expressed as the seminorm of the initial morphism
\begin{align*}
        \Norm{\pmb{\emptyset} \to \spSp }{P}
    &=  \supBd_v
        \inf\setBuilder{\delta>0}{ 0 + \delta \geq v }
        -\inf\setBuilder{\delta>0}{ \mu( M^{\delta)} ) + \delta \geq v }
\\
    &\leq\supBd_v v - (v - \mu(M))
\\
    &=\volume(\spSp)
\text.
\end{align*}

Define the Prokhorov distance from a measure $\spVol$ on a metric space to another such measure, $\spVol*$, by
\begin{align*}
        \Dist[P](\spVol, \spVol*)
    &\coloneqq
        \inf\setBuilder
            {\delta\in (0,\infty]}
            { \forall A\in \closed (\spSp) \colon \spVol( A^{\delta)} ) + \delta \geq \spVol*( A) }
\intertext{and the Prokhorov distance by symmetrization}
        \Dist[P]^+(\spVol, \spVol*)
    &\coloneqq
        \frac{1}{2} \left( \Dist[P](\spVol, \spVol*) + \Dist[P](\spVol*, \spVol) \right)
\text;
\end{align*}
to check that $\Dist[P]$ is a quasimetric observe that the separation $\Dist[P](\spVol, \spVol*) \neq 0 \implies \spVol \neq \spVol* $ is obvious, and that the remaining axioms (triangle inequality and identity of indiscernibles) are implied by the following \cref{prop:Prokhorov_dist_seminorm}.

\begin{proposition}\label{prop:Prokhorov_dist_seminorm}
    Given Borel measures $\spVol, \spVol*$ on a metric space $(\spSet, \spMet)$.
    Then 
    \[
        \Dist[P](\spVol, \spVol*) = \Norm{\id_{\spVol\spVol*}}{P}
    \]
    where  $\id_{\spVol\spVol*}$ is the identity map from $(\spSet, \spMet, \spVol)$ to $(\spSet, \spMet, \spVol*)$.
\end{proposition}

\begin{proof}
    By definition of the seminorm induced by a capacity \cref{eq:seminorm_by_w} we have
    \[
            \Norm{\id_{\spVol\spVol*}}{P}
        \coloneqq\supBd_{v, A}
                \inf \setBuilder{\delta>0}{ \nu( A^{\delta)}) + \delta \geq v }
            -   \inf \setBuilder{\delta>0}{ \mu( A^{\delta)}) + \delta \geq v }
    \text.
    \]

    First we show that $ \Dist[P](\spVol, \spVol*) \geq \Norm{\id_{\spVol\spVol*}}{P} $.
    Take any measurable $A \subseteq \spSet*$.
    Assume that $ \delta > \Dist[P](\spVol, \spVol*) $, i.e.\ $ \spVol( B^{\delta)} ) + \delta \geq \spVol*(B) $ for all measurable sets $B \subset \spSet$.
    We want to show $ \delta > \Norm{\id_{\spVol\spVol*}}{P} $.
    To this end we have to show that if $\spVol( A^{\delta_1)}) + \delta_1 < v$ for some $\delta_1 > \delta$, then
    $\spVol*( A^{\delta_2)}) + \delta_2 < v  $ for $\delta_2 = \delta_1 - \delta $.
    Indeed $
        v > \spVol( A^{\delta_1)}) + \delta_1 = \spVol( A^{\delta_2 + \delta)}) + \delta + \delta_1 
    $ and $ 
        \spVol( (A^{\delta_2)})^{\delta)} ) + \delta \geq \spVol*(A^{\delta_2)}) 
    $ imply $ \spVol*( A^{\delta_2)}) + \delta_2 < v $.
    
    The reverse inequality $ \Dist[P](\spVol, \spVol*) \leq \Norm{\id_{\spVol\spVol*}}{P} $ is shown by the parallel argument.
    Assume $ \delta < \Norm{\id_{\spVol\spVol*}}{P} $, i.e.\ $ \spVol( B^{\delta)} ) + \delta < \spVol*(B) $ for some measurable set $B \subset \spSet$.
    We want to show $ \delta < \Dist[P](\spVol, \spVol*) $, i.e.\   $ \inf \setBuilder{\delta>0}{ \nu( A^{\delta)}) + \delta \geq v }
            -   \inf \setBuilder{\delta>0}{ \mu( A^{\delta)}) + \delta \geq v } $ for any measurable set $A$ and $v\geq 0$.
    Choose $A = B$ and $ v = \spVol*(A) $.
    Then we have $ \inf \setBuilder{\delta>0}{ \nu( B^{\delta)}) + \delta \geq \spVol*(A) } > \delta $
    and $ \inf \setBuilder{\delta>0}{ \mu( A^{\delta)}) + \delta \geq \spVol*(A) } = 0 $.
    Thus $ \delta < \Norm{\id_{\spVol\spVol*}}{P} $.
\end{proof}

\begin{lemma}
    The Prokhorov quasidistance coincides with the Prokhorov distance for probability measures.
\end{lemma}

Observe that \todo{can be proved by thickening arithmetic from the appendix}
\begin{equation}\label{eq:ProkhorovDist_thickening}
    A^{\delta) \;\mathrm{C}\;\delta)} \subseteq A^{\mathrm{C}}
\text.
\end{equation}
This follows from the fact that $A^{\delta) \;\mathrm{C}\;\delta) \;\mathrm{C}} = A^{\delta) \;-\delta) }  \supset A^\circ $.

\begin{proof}
    To prove the lemma, we have to show that the quasimetric $\Dist[P]$ is symmetric.
    Assume that $ \Dist[P](\spVol, \spVol*) > \delta > 0 $.
    This must be witnessed by a measurable set $A\subset \spSet$ such that $
        \spVol( A^{\delta)} ) + \delta < \spVol*( A )
    $.
    Observe the equivalences
    \begin{align*}
        &&                      \spVol( A^{\delta)} ) + \delta     &< \spVol*( A )
    \\ &\iff&      1 - \spVol( A^{\delta)\;\mathrm{C}} ) + \delta    &< 1 - \spVol*( A^{\mathrm{C}} )
    \\ &\iff&                  \spVol*( A^{\mathrm{C}} ) + \delta  &< \spVol( A^{\delta) \;\mathrm{C}} )
    \\&\implies&                \spVol*( A^{\delta) \;\mathrm{C}\;\delta)} ) + \delta &< \spVol( A^{\delta) \;\mathrm{C}} )
    &&\text{by \cref{eq:ProkhorovDist_thickening}.}
    \end{align*}
    But this implies $ \Dist[P](\spVol*, \spVol) > \delta $.
    Since $\spVol, \spVol*$ were arbitrary, $ \Dist[P](\spVol*, \spVol) $ is symmetric.
\end{proof}

\begin{theorem}\label{thm:Prokhorov_dist}
    \begin{subequations}
    The Prokhorov norm can be expressed as
    \begin{align}
            \Norm{f}{P}
        &=
            \inf\setBuilder
                { \delta > 0 }
                {\forall A\in \mathcal{B}(\spSp*), r \geq 0\colon \spVol( (f^* A)^{r+\delta)} ) + \delta \geq \spVol*( (A)^{r)}) }
\\
\label{eq:thm:Prokhorov_dist_closed}
    &=
        \inf\setBuilder
            { \delta > 0 }
            {\forall A\in \mathcal{B}(\spSp*), r \geq 0\colon \spVol( (f^* A)^{r+\delta]} ) + \delta \geq \spVol*( A^{r]}) }
    \text.
    \end{align}
    The equations also hold with $\mathcal{B}(\spSp*)$ replaced by $\closed(\spSp*)$
    \end{subequations}
\end{theorem}

\begin{proof}
    Let $ \mathit{Ra}(f) $ and $ \mathit{Rb}(f) $ denote the right-hand side of the equations respectively. Let $ \mathit{Ra}_{\delta, A, r}(f) $ denote the fact $ \spVol (f^* A)^{r+\delta)}  + \delta \geq \spVol*A^{r)} $ and $ \mathit{Rb}_{\delta, A, r}(f) $ the fact $ \spVol (f^* A)^{r+\delta]}  + \delta \geq \spVol* A^{r)} $.
    
    First we prove $ \Norm{f}{P} \leq \mathit{Ra}(f), \mathit{Rb}(f) $.
    Take an arbitrary $\delta > \mathit{Ra}(f)$ (or, resp., $ \delta > \mathit{Rb}(f) $ and choose $\delta' \in (\mathit{Rb}(f), \delta)$).
    We want to prove $ \Norm{f}{P} < \delta $.
    This is to say that for every $v \geq 0$ and $ A \in \mathcal{B}(\spSp*)$ we have to show $ c_P(f^*A, v) - c_P(A, v) < \delta $.
    Observe
    \begin{align*}
         &       &   c_P(f^*A, v) - c_P(A, v) &< \delta
    \\
        &\iff   &   \inf \setBuilder{\delta>0}{ \spVol (f^*A)^{\delta)} + \delta \geq v } 
                        &< \delta + \inf \setBuilder{\delta>0}{ \spVol* A^{\delta)} + \delta \geq v }
    \\
        &\iff  &    \forall \delta_2\colon
                    \Bigl( \spVol* A^{\delta_2)} + \delta_2 \geq v 
                    &\implies \spVol (f^* A)^{\delta_2 + \delta)} + \delta_2 + \delta \geq v\Bigr)
    \end{align*}
    The implication follows from $\mathit{Ra}_{\delta_2, A, \delta}(f)$ (resp. from $\mathit{Ra}_{\delta_2, A, \delta'}(f)$ by $ 
        v   \leq \spVol* A^{\delta_2)} + \delta_2
            \leq \spVol* A^{\delta_2]} + \delta_2
            \leq \spVol (f^* A)^{\delta' + \delta_2]} + \delta' + \delta_2
            \leq \spVol (f^* A)^{\delta + \delta_2)} + \delta + \delta_2
    $).
    
    In the opposite direction we take any $l$ with $ \Norm{f}{P} < l $.
    This is to say that for all $v \geq 0$ and $ A \in \mathcal{B}(\spSp*)$ we have $c_P(f^*A, v) - c_P(A, v) < l$.
    We have to prove $ \mathit{Ra}(f), \mathit{Rb}(f) \leq l $ that is to say that for any $r \geq 0$ and $ B \in \mathcal{B}(\spSp*)$ we have $ \mathit{Ra}_{l, B, r}(f) $ and, resp., $\mathit{Rb}_{l, B, r}(f)$.
    Choose $A = B$ and $v = \spVol* B^{r)} + r $ (resp. $v = \spVol* B^{r]} + r $).
    Thus $c_P(A, r) = r$ (for both choices of $v$).
    Hence $ c_P(f^*A, v) - c_P(A, v) < l$ implies
    \[
         \inf \setBuilder{\delta>0}{ \spVol( (f^*A)^{\delta)}) + \delta \geq \spVol* B^{r)} + r } < l + r
    \text,
    \]
    thus $ \spVol (f^*A)^{l+r)} + (l+r) \geq \spVol* B^{r)} + r $ but this is $\mathit{Ra}(f)$;
    resp.\ we have 
    \[
         \inf \setBuilder{\delta>0}{ \spVol( (f^*A)^{\delta)}) + \delta \geq \spVol* B^{r]} + r } < l + r
    \text,
    \]
    thus $ \spVol (f^*A)^{l+r]} + (l+r) \geq \spVol* B^{r]} + r $ but this is $\mathit{Rb}(f)$.
    
    The replacement of $\mathcal{B}(\spSp*)$ by $\closed(\spSp*)$ is an immediate consequence of the fact that each set $ A^{r+\delta)} $ or $ A^{r+\delta]} $ contains the closure of $A$.
\end{proof}

Let $\Cat{mMet}_{\textnormal{fin}}$ denote the category of fully supported finite mm spaces.

\begin{theorem}
    On the category $\Cat{mMet}_{\textnormal{fin}}$ the seminorm $ \Norm{}{P} $ is a norm.
    \todo{Need probably more conditions}
\end{theorem}

\begin{proof}
    We will first proof axiom \cref{lst:def:norm1}.
    
    First note that $\spVol(\spSet) = \spVol*(\spSet*) $ because by \cref{eq:thm:Prokhorov_dist_closed} for all $\delta > 0$ we have $ \spVol(\spSet) + \delta \geq \spVol*(\spSet*) $ and $ \spVol*(\spSet*) + \delta \geq \spVol(\spSet) $. 
    Without loss of generality assume $\spVol(\spSet) = \spVol*(\spSet*) = 1$.
    taking any closed set

    Assuming  $\spVol( (f^* A)^{r+\delta]} ) + \delta < \spVol*( A^{r]})  $ we have
    \begin{align*}
        &&    \spVol( (f^* A)^{r+\delta]} ) + \delta &< \spVol*( A^{r]})
    \\ &\iff&      1 - \spVol( (f^* A)^{r + \delta]\;\mathrm{C}} ) + \delta    &< 1 - \spVol*( A^{r]\mathrm{C}} )
    \\ &\iff&                  \spVol*( A^{r]\mathrm{C}} ) + \delta  &< \spVol( (f^* A)^{\delta] \;\mathrm{C}} )
    \\&\implies&                \spVol*( A^{r + \delta] \;\mathrm{C}\;\delta]} ) + \delta &< \spVol( (f^* A)^{\delta] \;\mathrm{C}} )
    &&\text{by \cref{eq:ProkhorovDist_thickening}.}
    \end{align*}
    
\end{proof}

\subsection{Wasserstein seminorm}
\label{ssec:snorm_Wasserstein}
We denote the for $\varphi\colon \spSet \to \R$ by
\[
    \hoeld{\varphi}1 \coloneqq \Norm{\varphi}{Lip}^{*\mathrm{L}} = \sup_{x, y \in \spSet} \frac{\abs{\varphi(x) - \varphi(y)} }{\dist{x}{y}}
\]
what is normally called the Lipschitz seminorm in function space theory.
The category $\Cat{PmMet}$ of projective metric measure spaces has as object equivalence classes $\equivClass\spSp {W}$ of mm-spaces $\spMetVolDef$ with $\spVol \neq 0$ under the equivalence relation $ 
  (\spSet, \lambda\spMet, \spVol ) \equivRel{W} (\spSet, \spMet, \lambda \spVol )
$ for any $\lambda>0$ and as morphisms any measurable maps.
Note that $ 
        \hoeld{(\spSet, \nicefrac{1}{\lambda}\spMet, \lambda\spVol) \xrightarrow{f} \R }1  
    =   \lambda
        \hoeld{(\spSet, \spMet, \spVol) \xrightarrow{f} \R }1
$.

We define the space of \definiendum{projective metric measure spaces with test functions $\Cat{TPmMet}$} as 
\begin{align*}
        \Cat{TPmMet}_0
    &\coloneqq
        \setBuilder*
            { (\equivClass\spSp{W}, \varphi) }
            { \begin{gathered}
                \spSp \in \Cat{PmMet}_0, \\
                \varphi\colon \spSet \to [0,1] \text{ is measurable}
            \end{gathered}}
\\
        \hom[TPmMet]{(\spSp, \varphi)}{(\spSp*, \psi)}
    &\coloneqq
        \setBuilder*
            { f }
            { \begin{gathered} 
                f \in \hom[PmMet]{\spSp}{\spSp*} \text{ with} \\
                \varphi \leq \psi \comp f \text{ and }\\ 
                \hoeld{\varphi}1 \geq \hoeld{\psi \comp f}1
            \end{gathered}}
\end{align*}
where the condition $\hoeld{\varphi}1 \geq \hoeld{\psi \comp f}1$ is independent of the choice of representative of $\equivClass\spSp {W}$.
We embed $\Cat{PmMet}$ into $\Cat{TPmMet}$ via the functor 
\begin{equation}
    \Cat{PmMet} \to \Cat{TPmMet}, \quad
    \spSp \mapsto ( \spSp, 1 )
\end{equation}
where $1$ denotes the constant function with value $1$.

The set of subobjects, $\subObjs((\equivClass{(\spSet, \spMet, \spVol)}{P}, \varphi))$, consists of those ordered pairs $(\equivClass{\spSp[A]}{P}, \varphi')$ with $\spSp[A] = (A, \spMet[A], \spVol[A]) \in \Cat{mMet}_0 $ such that  $A \subseteq \spSet$, the inclusion function of $A$ in $\spSet$ is $(\mathcal{B}(\spSp[A]), \mathcal{B}(\spSp))$-measurable, $\varphi' \leq \varphi$, and $\hoeld{\varphi'}1 \geq \hoeld{\varphi}{1}$ as functions on  $(A, \spMet[A])$.
We define the precapacity, $c_{\mathrm{W}}( [\spSp], \varphi )$, of $( [\spSp], \varphi )$ by 
\[
        \capacity[W]( [\spSp], \varphi )
    \coloneqq
        \supBd \setBuilder*
            { \int \varphi d \spVol }
            { (\spSet, \spMet, \spVol) \in \equivClass\spSp{W}, 
                \hoeld*{(\spSet, \spMet) \xrightarrow{\varphi} [0,1] }1 \geq 1 }
\text.
\]

If $\varphi\colon \spSp \to [-1,1]$ is not constant (i.e.\ $\hoeld{\varphi}1 > 0$) and $\hoeld{\varphi}1 < \infty$ define $(\spSet, {\spMet}_\varphi, \spVol_\varphi)$ to be the representative of $\spSp$ such that $ \hoeld{\varphi\colon (\spSet, \spMet) \to \Bar{\R}}1 = 1$; i.e.\ 
\[
    {\spMet}_\varphi \coloneqq \hoeld{\varphi}1 \spMet, \qquad
    \spVol_\varphi \coloneqq \frac{1}{\hoeld{\varphi}1} \spVol
\]

\begin{lemma}
    \begin{subequations}
    We have
    \begin{empheq}[left={c_{\mathrm{W}}( \spSp, \varphi )
    =\empheqbiglbrace~}]{align}
    \label{eq:capacityWasserstein_main}
    &\smallint \varphi d \spVol_\varphi
        && \text{when }\hoeld{\varphi}1 \in (0, \infty),
    \\
    \label{eq:capacityWasserstein_infty}
    &\infty  && \text{if } \hoeld{\varphi}1 = 0 \text{ and } \varphi > 0,
    \\
    \label{eq:capacityWasserstein_zero}
    &0   && \text{if } \hoeld{\varphi}1 = \infty \text{ or } \varphi \equiv 0.
    \end{empheq}
    \end{subequations}
\end{lemma}

The cases \cref{eq:capacityWasserstein_infty,eq:capacityWasserstein_zero} are called the \definiendum{trivial cases}.

\begin{proof}
 \todo{fill in}    
\end{proof}

\begin{lemma}
    The precapacity $ \capacity[W] $ is a capacity.
\end{lemma}

\begin{proof}
    To check monotonicity take a commuting triangle 
\begin{center}
\begin{tikzcd}
        & (\equivClass\spSp{P}, 1) &           \\
    (\equivClass{\spSet, \spMet, \spVol}{P}, \varphi ) \ar[ru, "{\id}"] \ar[rr, "{\id}"] & &  (\equivClass{\spSet, \spMet', \spVol'}{P}, \psi ) \ar[lu, "{\id}"] 
\end{tikzcd}
\end{center}
where $ (\spSet, \spMet, \spVol)$ and $ (\spSet, \spMet', \spVol') $ are in $ \equivClass\spSp{P} $, $ \hoeld{\psi}{1} \leq \hoeld{\varphi}{1}$. We have to show $ \capacity[W]( \equivClass\spSp{P}, \varphi ) \leq \capacity[W]( \equivClass\spSp{P}, \psi ) $.

We first consider the trivial cases.
The case $ \hoeld{\psi}{1} \in \{ 0, \infty \} $ is obvious: either $ \hoeld{\psi}{1} = 0 \vee \psi > 0 $, $\hoeld{\psi}{1} = \infty$, or $\varphi \equiv 0$.
In the first case we have $ \capacity[W]( \equivClass\spSp{P}, \psi ) = \infty \geq \capacity[W]( \equivClass\spSp{P}, \varphi ) $.
In the second case we have $ \hoeld{\varphi}{1} \geq \hoeld{\psi}{1} = \infty $ and, hence, $ \capacity[W]( \equivClass\spSp{P}, \varphi ) = \infty $.
In the last case we have $\varphi \leq \psi \equiv 0$ and thus $ \capacity[W]( \equivClass\spSp{P}, \varphi ) = 0 $.
In the case $ \hoeld{\varphi}{1} \in \{ 0, \infty \} $ we have either $ \capacity[W]( \equivClass\spSp{P}, \varphi ) = 0 $, and thus the inequality $ \capacity[W]( \equivClass\spSp{P}, \varphi ) \leq \capacity[W]( \equivClass\spSp{P}, \psi ) $ hold automatically, or we have $ \hoeld{\varphi}{1} = 0 $ and $ \varphi > 0$ but the we have also $ \hoeld{\psi}{1} = 0 $ and $ \psi > 0$. Thus also $ \capacity[W]( \equivClass\spSp{P}, \psi ) = \infty $.

and $ (\spSet, \spMet, \spVol) $, $ (\spSet, \spMet', \spVol') $ are chosen extremely, i.e.\  $ \hoeld{(\spSet, \spMet) \xrightarrow{\varphi} [0,1] }1 = \hoeld{(\spSet, \spMet') \xrightarrow{\psi} [0,1] }1 = 1$.

We know: $ \spVol = \lambda \spVol' $ and $ \lambda \spMet = \spMet'  $

$ \hoeld{\psi}{1} \leq \hoeld{\varphi}{1}$ implies that $ \hoeld{(\spSet, \spMet) \xrightarrow{\psi} [0,1] }1 \leq 1
 $. Moreover $ \hoeld{(\spSet, \lambda\spMet) \xrightarrow{\psi} [0,1] }1 = 1
 $.
Hence $ \lambda \leq 1  $.
Thus
\[
            \capacity[W]( \equivClass\spSp{P}, \varphi )
    =       \int \varphi d \spVol
    \leq    \int \psi d \spVol
    \leq    \int \psi d \frac{\spVol}{\lambda}
    =       \int \psi d \spVol'
    =       \capacity[W]( \equivClass\spSp{P}, \psi )
\text.
\qedhere
\]
\end{proof}

\begin{subequations}
\begin{align}
    \Norm{f}{W} &\coloneqq \norm{f}_{-c_W}
\\ \nonumber
    &=   \supBd c_{\mathrm{W}}( \spSp, f^*\psi ) - c_{\mathrm{W}}( \spSp*, \psi )
\\
    &=   \supBd c_{\mathrm{W}}( \spSp, \psi \comp f ) - c_{\mathrm{W}}( \spSp*, \psi )
\end{align}
\end{subequations}

For general $ \mathrm{W}^p $ spaces generalize the construction by introducing the equivalence relation
\begin{equation}
    (\spSet, \lambda\spMet, \spVol) \sim_p (\spSet, \spMet, \lambda^p\spVol)
\end{equation}
for $\lambda > 0$. 
Let $\Cat{P${}^p$mMet}$ be the category of such spaces and measurable maps.
Let $\mathit{Mar}^p_f(\spVol, \spVol*)$ denote the set of all measures $m$ on $\spSet \times \spSet*$ such that $(\proj_{\spSet*})_* m = s\spVol*$ for some $s > 0$, $(\proj_{\spSet})_* m = r^p\spVol$ for some $r > 0$, and $ \hoeld{f\colon (\spSet, \nicefrac{1}{r} \spMet ) \to (\spSet*, \spMet*)}1 \leq 1 $

Observe that
\begin{equation}\label{eq:Wasserstein_totalMeasures}
    \mathit{Mar}^p_f(\mu, \lambda^{-p}\nu) = \lambda^{-p} \mathit{Mar}^p_f(\mu, \nu) 
\end{equation}
since for every $m \in \mathit{Mar}^p_f(\mu, \nu)$ we have $(\proj_{\spSet*})_* (\lambda^{-p} m) = \lambda^{-p} s\spVol*$ for some $s > 0$, 
$(\proj_{\spSet})_* ( \lambda^{-p} m ) = (\lambda^{-1} r)^p\spVol$ for some $r > 0$, and $ \hoeld{f\colon (\spSet, \nicefrac{\lambda}{r} \spMet ) \to (\spSet*, \lambda \spMet*) }1 \leq 1 $; and vice versa $ \lambda^p m \in \mathit{Mar}^p_f(\mu, \nu)$ for every $ m \in \mathit{Mar}^p_f(\mu, \lambda^{-p}\nu) $.
Define the quantity
\begin{equation}
    \Norm{f}{W${}^p$}
    \coloneqq
        \inf_{m \in \mathit{Mar}^p_f(\mu, \nu)}
        \sqrt[\leftroot{3}\uproot{3} \scriptstyle p]{\int \dist{f(x)}{y}^p d m }
\end{equation}
Note that by \cref{eq:Wasserstein_totalMeasures} this quantity does not depend on the representative of $\spSp*$.
The independence from the choice of the representative of $ \spSet $ is immediate from the definition of $\mathit{Mar}^p_f(\mu, \nu)$.

\color{red}

$ |m| = |\nu| $

\begin{example}
    Look at the case that $ f $ is constant with value $q$.
    Then
    \begin{equation}
    \Norm{f}{W${}^p$}
    =
        \inf_{m \in \mathit{Mar}^p_f(\mu, \nu)}
        \sqrt[\leftroot{3}\uproot{3} \scriptstyle p]{\int \dist{q}{y}^p d \nu(y) }
\end{equation}

For $p=1$ this should be equal to
\[
    \norm{f}_{c_W}
    =   \supBd_{\psi\leq 1} c_{\mathrm{W}}( \spSp*, \psi ) - c_{\mathrm{W}}( \spSp, \psi (q) )
\]
i.e.\
\[
    \supBd_\psi (-c_{\mathrm{W}}( \spSp*, \psi )) - (-c_{\mathrm{W}}( \spSp, \psi (q) ))
    =
    \int \dist{q}{y} d \nu(y)
\]
idea $\psi(x) = \dist{q}{x} \wedge 1$

\end{example}

\color{black}

\begin{theorem}[Kantorovich-Rubinstein]
    If $\spSp$, $\spSp*$ have finite volume,
    \[
            \Norm{}{W} 
        = 
            \Norm{f}{W${}^1$}
          + \left( 1 - \hoeld{ 
                (\spSet, \left| \spVol \right| \spMet ) \xrightarrow{f} 
                (\spSet*, \left| \spVol* \right| \spMet* ) 
            }1 \right)
    \]
\end{theorem}

\begin{proof}
    Easy direction: observe that for $\mu$ in definition of $\Norm{}{W}$ the variable $r$ from definition of $\mathit{Mar}^1_f(\mu, \nu)$ is 1.
    \begin{align*}
            \int \psi d (f_* \spVol - \spVol*)
        &=     \int \psi \circ f d \spVol
            - \int \psi d \spVol* 
    \\
        &=     \int \psi(x) \circ f d m(x,y)
            - \int \psi(y) \cdot \frac{\volume \spVol*}{\volume \spVol} d m (x, y)
    \end{align*}
\end{proof}

\end{workingNotes}

\appendix

\section{Notation}\label{sec:notation}

In this appendix we will describe much notation used throughout this work.  Other notation can be found in our references or is defined along the way in the body of this article.

\subsection{Set theory}\label{ssec:setTh}
Per common practice, we will typically use the notation of  Zer\-me\-lo-Fraen\-kel Set Theory with the Axiom of Choice (ZFC), but in many cases, especially when we need to work with proper classes, we will actually use the relatively consistent extension referred to generally as GBN (Gödel-Bernaise-von Neumann) Class-Set Theory with the Axiom of Choice.  Foundationally, we could use instead category theoretic foundations, but that seems to us to be merely a matter of taste.  Accordingly, we leave it to the readers to adjust our recipes and seasonings for the dishes we describe to their preferences. 

Given a set $X$, we denote by $\powerSet (\set)$ the power set of $X$, i.e. the set of all subsets of $\set$.  We will find it convenient to have a systematic notation for image and preimage mappings on the power set of a set, but in fact, for more variants than merely the simplest pair of such, whence, we denote by $f_*(A)$, or just $f_*A$ the set $\setBuilder{f(x)}{ x\in A }$, where $A$ is a subset of the domain of $f$, denote the preimage of any set $B$ under $f$ by $f^*(B)$ or $f^*B$ and by $f_!A$ or $f_!(A)$ we denote the \textbf{small image} of $A$, $f_!(A) \coloneqq \setBuilder{y}{ f(x)=y \implies x \in A }$ (note that $f_! (A)$ always contains all points in $\set*$ that are not hit by $f$).
One should be careful that the adjunctions for $ \powerSet (\set), \powerSet(\set*)$ are reversed compared to the adjunctions for a geometric morphism in algebraic geometry. This is because in algebraic geometry one would not study $\powerSet(\set)$ or $\powerSet(\set*)$ but sheaves thereon. Thus our adjunctions are
\[
  f_* \dashv f^* \dashv f^!.
\]
Note that both $f_!$ or $f_!$ are completely determined by this property, as---dwelling in the set-up of posets---these adjunctions are actually Galois connections.

\subsection{Orders}
We denote partial orders by $ \ordDef, \ordDef*, $ and $ \ordDef** $.
Let $\mathcal{L} = (L, \leq)$ be a complete lattice, i.e.\ a poset that admits all suprema (and consequently, all infima).  We define $\sup\colon L\times \powerSet (L) \to L$ by 
\begin{align*}
    \sup (m, M) &\coloneqq \sup \left( M \cap \setBuilder{m'}{m\leq m'}  \right) \cup \{m\},
\intertext{and then for any $m\in L$, set }
    \supBd[m] M &\coloneqq  \sup (m, M)
\intertext{for any subset $M$ of $L$. Moreover, if $I$ is any (indexing) set, then}
    \smash{\supBd[m]_P f} &\coloneqq \supBd[m] \setBuilder{f(i)}{i\in P},
\end{align*}
for any subset $P$ of $I$ and function $f$ on $I$. If not specified otherwise, $\sup$ is understood as the supremum function on the extended real numbers $[-\infty,\infty]$.

\subsection{Categories}

We list the (2-)-categorical notation used in this article:
\begin{itemize}
    \item special (bi-)categories: $\Cat{Top}, \Cat{Met}$, etc.;
    \item variables for (bi-)categories: $ \cat, \cat*, \cat** $;
    \item variables for objects: $X, Y, M, \ldots$;
    \item variables for morphisms $f$, $g$, $h$;
    \item $f\colon \source(f) \to \target(f)$;
    \item variables for 2-morphisms: $\alpha\colon f \2to g$, $ \beta\colon g \2to h$, etc.;
    \item $ \cat_0 $/$ \cat_1 $/$ \cat_2 $ for the collection of objects/morphisms/2-morphisms
    \item $ \hom{X}{Y} $ set of morphisms from $X$ to $Y$ in category $\cat$, written $\hom*{X}{Y}$ if category is specified by the context;
    \item composition $f \comp* g = g \circ f$ of morphisms;
    \item vertical composition $ \alpha \compV* \beta = \beta \compV \alpha $ of 2-morphisms;
    \item horizontal composition $ \alpha \compH* \beta = \beta \compH \alpha $ of 2-morphisms;
    \item 
        we write $ {\comp*_{\cat}} = {\comp*}$, ${\comp_{\cat}} = {\comp}$, 
        ${\compV*_{\cat}} = {\compV*}$, ${\compV_{\cat}} = {\compV}$, 
        ${\compH*_{\cat}} = {\compH*}$, ${\compH_{\cat}} = {\compH} $ 
        to specify the category.
\end{itemize}

\section{2-categorical viewpoint on seminorms}
    \label{sec:2cat_viewPoint}
Let $\Cat{Cat}$ denote the category of small categories. Note that this category has products.
Recall that a \definiendum{bifunctor from $\cat$ and $\cat*$ to $\cat**$ } is a functor from a product category $\cat \times \cat*$ to $\cat**$.

\subsection{Strict 2-categories}
A \definiendum{strict 2-category} $\cat$ is a category enriched in $\Cat{Cat}$ meaning that $\cat$ consists of
\begin{itemize}
    \item a class $ \cat_0 $ of objects;
    \item for each $X, Y \in \cat_0 $ a category $ \hom{X}{Y} \in \Cat{Cat} $.
        Morphisms in $ \hom{X}{Y} $ are called \definiendum{2-morphisms} and depicted by ${\Rightarrow}$. Composition of such morphisms is called \definiendum{vertical composition} and denoted by ${\compV*}$.
    \item
        For all objects $ X, Y, Z \in \cat_0 $ there is a bifunctor $ 
            \compH*_{XYZ} = \compH* \colon  \hom{X}{Y} \times \hom{Y}{Z} \to \hom{X}{Z} 
        $, $ (f,g) \mapsto f \compH* g \eqqcolon g \compH f $, $ (f,g) \mapsto \alpha \compH* \beta \eqqcolon \beta \compH \alpha $, called \definiendum{horizontal composition}.
    \item 
        for each object $X$ an \definiendum{identity element} $\id_X \in \hom{X}{X}$.
\end{itemize}
such that the following axioms are satisfied
\begin{itemize}
    \item  
        for object $X, Y, Z, X'$ an \textit{associativity law}, the equality of functors
        \[
                \compH*_{XYX'} \comp_{\Cat{Cat}} ( \id_{\hom{X}{Y}} \times ( \compH*_{YZX'} ) )
            =
                \compH*_{XZX'} \comp_{\Cat{Cat}} ( ( \compH*_{XYZ}) \times \id_{\hom{Z}{X'}} )
        \]
        (which for morphisms $ X \xrightarrow{f} Y \xrightarrow{g} X \xrightarrow{h} X' $ states that $ f \compH* (g \compH* h) = ( f \compH* g ) \compH* h $).
    \item  
        for each $X \xrightarrow{f} Y \in \cat_1 $ the \textit{identity law}
        \[
            f = \id_X \compH* f = f \compH* \id_Y
        \text{.}
        \]
\end{itemize}
Note that strict 2-categories are too restrictive for many applications. Therefore there are weaker notions of a 2-category, especially bicategories.

\subsection{Lax functors}
In the case of strict 2-categories a \definiendum{lax functor} $F\colon \cat \to \cat*$ can be defined as an assignment of
\begin{enumerate}[label=($l$F\arabic*)]
    \item
        each $ X \in \cat_0 $ to an object $ F_X \in \cat*_0 $;
    \item
        each $ \hom{X}{Y} $ to a functor $ F_{XY} \colon \hom{X}{Y} \to \hom{F_X}{F_Y} $;
    \item (lax preservation of identity)
        each $ X \in \cat_0 $ to  an invertible 2-morphism 
        $ F_{\id_X}\colon \id_{F_X} \Rightarrow F_{XX}(\id_X) $  in $ \cat*_2 $;
    \item (lax preservation of composition)
        each $ X, Y, Z \in \cat_0 $ to a natural transformation $F_{XYZ}$ from the bifunctor $ (f, g) \mapsto P_{XY}(f) \compH* P_{YZ}(g) $, $  \hom{X}{Y} \times \hom{Y}{Z} \to \hom{P_X}{P_Z} $ to the bifunctor $ (f,g) \mapsto P_{XZ}(f \compH* g) $;
\end{enumerate}
such that
\begin{enumerate}[label=($l$F\arabic*),resume]
    \item
        for each $ X, Y \in \cat_0 $ and $ f \in \hom{X}{Y} $
        an \textit{identity law}
        \begin{align}
        \tag{$l$F5a}
                (F_{\id_X} \compH* \id_{F_{XY}(f)}  ) \compV* F_{XXY}( \id_X, f ) 
            = 
                \id_{F_{XY}(f)}
        \\
        \tag{$l$F5b}
                ( \id_{F_{XY}(f)} \compH* F_{\id_Y} ) \compV* F_{XYY}( f, \id_Y ) 
            = 
                \id_{F_{XY}(f)}
        \end{align}
    \item
        for each diagram $ X \xrightarrow{f} Y \xrightarrow{g} X \xrightarrow{h} X' $
        an \textit{associativity law}
        \begin{multline*}
              ( F_{XYZ}(f, g) \compH* \id_{F_{ZX'}(h)} ) \compV* F_{XZX'} ( f \comp* g, h )
            \\=
              ( \id_{F_{XY}(f)} \compH* F_{YZX'}(g, h) ) \compV* F_{XYX'} (f, g \comp* h)
        \end{multline*}
\end{enumerate}
Note that for more general bicategories the last two properties become more complicated.
If the units $ F_{\id_X}$ are identities, i.e.\ $ F_{\id_X} = \id_{\id_{F_X} } $, the lax functor is called \definiendum{normal}.

\subsection{Seminorms as lax functors}
        \label{ssec:seminorms_laxFunctors}

Remember that every set $S$ can be regarded as a category by interpreting $ S $ as the set of objects and allowing only trivial morphisms, $ \hom{x}{y} = \emptyset $ for $x\neq y$ and $ \hom{x}{x} = \{ \id_{x} \} $.
\begin{subequations}
In the same manner any (ordinary) category $\cat$ can be regarded as a strict 2-category by defining $\hom{X}{Y}_0 = \hom{X}{Y} $ and $ \hom{f}{g} = \emptyset $ for distinct $f,g\in \hom{X}{Y}$ or $\hom{f}{f} = \{\id_f\}$.
Another example is given by a $\Cat*{( *, [0,\infty], +, \geq)}$ where
\begin{align}
    \Cat*{(*, [0,\infty], +, \geq)}_0 &\coloneqq \{*\}
\\
    \hom{*}{*}[\Cat*{(*, [0,\infty], +, \geq)}] &\coloneqq \Cat*{([0,\infty], \geq)}
    &&\text{with } \compV*  = {\geq}
\\
\label{eq:R_comp}
    r \compH* s &\coloneqq r + s
\text{.}
\end{align}
One immediately checks that \cref{eq:R_comp} is functorial from the fact that $r \geq r'$ and $s \geq s'$ implies $ r + s \geq r' + s' $.
\end{subequations}

\begin{proposition}
    \begin{subequations}
    A seminorm $ \norm{}\colon \cat \to [0,\infty] $ on a (1-)category $\cat$ forms a lax functor $ \norm{}_{\mathrm{lF}}\colon \cat \to \Cat*{( *, [0, \infty], +, \geq)} $ by the assignments
    \begin{align}
        F_X &\coloneqq  *         
    &&\text{for all } X \in \cat_0
    \\
        F_{00}(f) &\coloneqq  \norm{f}       
    &&\text{for all } f \in \cat_1
    \\
    \label{eq:sN_2morph}
        F_{00}(f) (\id_f) &\coloneqq  (\norm{f} \geq \norm{f})       
    &&\text{for all } f \in \cat_1
    \\
        F_{\id_X} &\coloneqq (0 \geq 0)
    &&\text{for all } X \in \cat_0
    \\
    \label{eq:sN_comp}
            F_{000}(f,g) 
        &\coloneqq 
            ( \norm{f} + \norm{g} \geq \norm{f \comp* g}  )
    &&\text{for all } X \xrightarrow{f} Y \xrightarrow{g} Z
    \text{.}
    \end{align}
    \end{subequations}
\end{proposition}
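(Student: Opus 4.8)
The plan is to verify, item by item, that the assignments above supply the data and satisfy the axioms defining a lax functor between strict 2-categories, where $\cat$ is regarded as a strict 2-category with only identity 2-morphisms and $(*, [0,\infty], +, \geq)$ is the 2-category introduced just before the statement. The organizing observation is that the source is \emph{locally discrete}---each hom-category $\hom{X}{Y}$ has only identity morphisms---while the target is \emph{thin}, since each hom-category $([0,\infty], \geq)$ is a poset. Consequently, functoriality of the maps on hom-categories, naturality of the compositor, and every coherence equation will be automatic, and the only genuine input will be the two seminorm axioms \cref{lst:def:seminorm1,lst:def:seminorm2}.

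First I would dispatch the object data and the hom-category functors together with lax preservation of identity. The assignment $F_{00}\colon \hom{X}{Y}\to([0,\infty],\geq)$, sending $f$ to $\norm{f}$ and $\id_f$ to $(\norm{f}\geq\norm{f})$, is a functor: it preserves identities by \cref{eq:sN_2morph}, and preservation of composition is vacuous because $\id_f\comp*\id_f=\id_f$ is the only composite available in a discrete category. For lax preservation of identity one must exhibit an invertible 2-morphism $\id_{F_X}\Rightarrow F_{00}(\id_X)$, that is $0\Rightarrow\norm{\id_X}$; such a 2-morphism exists (with equality) precisely by \cref{lst:def:seminorm1}, and the resulting $F_{\id_X}=(0\geq 0)$ is an identity 2-morphism and hence invertible---so $\norm{}_{\mathrm{lF}}$ is in fact a \emph{normal} lax functor.

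Next comes lax preservation of composition, where the triangle inequality enters. For objects $X,Y,Z$ the two bifunctors $\hom{X}{Y}\times\hom{Y}{Z}\to\hom{F_X}{F_Z}$ at issue are $(f,g)\mapsto F_{00}(f)\compH* F_{00}(g)=\norm{f}+\norm{g}$ and $(f,g)\mapsto F_{00}(f\comp* g)=\norm{f\comp* g}$, using \cref{eq:R_comp}. The proposed component $F_{000}(f,g)=(\norm{f}+\norm{g}\geq\norm{f\comp* g})$ is a legitimate 2-morphism of the target exactly because $\norm{f\comp* g}\leq\norm{f}+\norm{g}$, i.e.\ by \cref{lst:def:seminorm2}. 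Naturality of the family $F_{000}$ is free: any naturality square is indexed by a morphism of $\hom{X}{Y}\times\hom{Y}{Z}$, necessarily an identity, and any diagram in the poset $([0,\infty],\geq)$ commutes. Finally, the identity laws ($l$F5a), ($l$F5b) and the associativity law ($l$F6) are equalities of parallel 2-morphisms in $([0,\infty],\geq)$, hence hold automatically by thinness.

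I do not anticipate a real obstacle here: the 2-category $(*, [0,\infty], +, \geq)$ was designed precisely so that the unit 2-morphism of a lax functor is controlled by \cref{lst:def:seminorm1} and the compositor by \cref{lst:def:seminorm2}. The only thing requiring care is the bookkeeping---stating explicitly which of the many lax-functor conditions are discharged for free by local discreteness of $\cat$ and by thinness of the hom-categories of $(*, [0,\infty], +, \geq)$, so that the write-up does not balloon into a long list of trivialities.
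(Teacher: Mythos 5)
Your proposal is correct and follows essentially the same route as the paper's own proof: both arguments reduce everything to the observations that the compositor exists by the triangle inequality, the unit 2-morphism exists by the vanishing of $\norm{\id_X}$, and all coherence conditions (identity and associativity laws) hold automatically because the hom-categories of the target are posets. Your write-up is merely more explicit about which axiom discharges which condition, and correctly notes in passing that the resulting lax functor is normal.
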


\begin{proof}
    First observe that $F$ is well-defined: the function defined by \cref{eq:sN_2morph} is in fact total since the only 2-morphisms in $\cat$ are identity morphisms.
    In \cref{eq:sN_comp} the image exists by triangle inequality.
    
    The identity laws hold automatically since the 2-homsets $ 
        \hom{r}{s}
            [{\hom{*}{*}[\Cat*{( *, [0, \infty], +, \geq)}]}]
    $ are at most singletons for all $r, s \in [0, \infty]$. The same applies to the associativity law.
\end{proof}

Note that any lax functor to $\Cat*{( *, [0, \infty], +, \geq)}$ is automatically normal.

\printbibliography

\end{document}